\def\RR{\rm \hbox{I\kern-.2em\hbox{R}}}
\def\NN{\rm \hbox{I\kern-.2em\hbox{N}}}
\def\ZZ{\rm {{\rm Z}\kern-.28em{\rm Z}}}
\def\CC{\rm \hbox{C\kern -.5em {\raise .32ex \hbox{$\scriptscriptstyle
|$}}\kern
-.22em{\raise .6ex \hbox{$\scriptscriptstyle |$}}\kern .4em}}
\def\<{\langle}
\def\>{\rangle}
\newcommand{\N}{{\mathbb N}}
\def\Chi{\raise .3ex
\hbox{\large $\chi$}} 
\def\lsima{\hbox{\kern -.6em\raisebox{-1ex}{$~\stackrel{\textstyle<}{\sim}~$}}\kern -.4em}
\def\lsim{\hbox{\kern -.2em\raisebox{-1ex}{$~\stackrel{\textstyle<}{\sim}~$}}\kern -.2em}
\def\[{\Bigl [}
\def\]{\Bigr ]}
\def\({\Bigl (}
\def\){\Bigr )}
\def\[{\Bigl [}
\def\]{\Bigr ]}
\def\({\Bigl (}
\def\){\Bigr )}
\newcommand{\be}{\begin{equation}}
\newcommand{\ee}{\end{equation}}
\newcommand{\beu}{\begin{equation*}}
\newcommand{\eeu}{\end{equation*}}
\newcommand{\bea}{$$ \begin{array}{lll}}
\newcommand{\eea}{\end{array} $$}
\newcommand{\bi}{\begin{itemize}}
\newcommand{\ei}{\end{itemize}}
\newtheorem{theorem}{Theorem}[section]
\newtheorem{remark}{Remark}[section]
\newtheorem{lemma}{Lemma}[section]
\newtheorem{corollary}{Corollary}[section]
\newtheorem{proof}{Proof}[section]
\newtheorem{definition}{Definition}[section]
\DeclareMathOperator*{\argmax}{argmax}
\DeclareMathOperator*{\argmin}{argmin}
\begin{document}
\bibliographystyle{plain}
\title
{
Multivariate approximation in downward closed polynomial spaces
\footnote{
This research is supported by Institut Universitaire de France and the ERC AdV project BREAD.}
}
\author{
Albert Cohen\thanks{
Sorbonne Universit\'es, UPMC Univ Paris 06, CNRS, UMR 7598, Laboratoire Jacques-Louis Lions, 4, place Jussieu 75005, Paris, France. 
email: cohen@ljll.math.upmc.fr
}
\
and 
\
Giovanni Migliorati\thanks{
Sorbonne Universit\'es, UPMC Univ Paris 06, CNRS, UMR 7598, Laboratoire Jacques-Louis Lions, 4, place Jussieu 75005, Paris, France. 
email: migliorati@ljll.math.upmc.fr
}
}

\date{December 15, 2016}
\maketitle

\begin{abstract}
\noindent
The task of approximating a function of $d$ variables from its evaluations at a given number of points is ubiquitous in numerical analysis and engineering applications. When $d$ is large, this task is challenged by the so-called {\em curse of dimensionality}. As a typical example, standard polynomial spaces, such as those of total degree type, are often uneffective to reach a prescribed accuracy unless a prohibitive number of evaluations is invested. In recent years it has been shown that, for certain relevant applications, there are substantial advantages in using certain {\it sparse} polynomial spaces having anisotropic features with respect to the different variables. These applications include in particular the numerical approximation of high-dimensional parametric and stochastic partial differential equations. We start by surveying several results in this direction, with an emphasis on the numerical algorithms that are available for the construction of the approximation, in particular through interpolation or discrete least-squares fitting. All such algorithms rely on the assumption that the set of multi-indices associated with the polynomial space is {\it downward closed}. 
In the present paper we introduce some tools for the study of approximation in multivariate spaces under this assumption, and use them in the derivation of error bounds, sometimes independent of the dimension $d$, and in the development of adaptive strategies.
\end{abstract}

{\bf AMS classification numbers:}   
41A65, 
41A25, 
41A10, 
41A05, 
65N12, 
65N15.  

{\bf Keywords:} 
Multivariate approximation, 
error estimates, 
convergence rates, 
interpolation, 
least squares, 
downward closed set, 
parametric PDEs, 
PDEs with stochastic data, 
adaptive approximation.

\section{Introduction}\label{sec:1}

The mathematical modeling of complex physical phenomena often demands for functions that depend on a large number of variables. One typical instance occurs when a quantity of interest  $u$ is given as the solution to an equation written in general form as
\begin{equation}
{\cal P}(u,y)=0,
\label{eq:problem}
\end{equation}
where $y=(y_j)_{j=1,\dots,d}\in  \mathbb{R}^d$ is a vector that concatenates various physical parameters which have an influence on $u$.

Supposing that we are able to solve the above problem, either exactly or approximately by numerical methods, for any $y$ in some domain of interest $U\subset\mathbb{R}^d$ we thus have access to the {\it parameter-to-solution} map 
\begin{equation}
y \mapsto u(y).
\label{eq:solmap}
\end{equation}
The quantity $u(y)$ may be of various forms, namely:
\begin{enumerate}
\item
a real number, that is, $u(y)\in\mathbb{R}$;
\item
a function in some Banach space, for example when~\eqref{eq:problem} is a partial differential equation (PDE);
\item
a vector of eventually large dimension, in particular when~\eqref{eq:problem} is a PDE whose solution is numerically approximated using some numerical method with fixed discretization parameters. 
\end{enumerate}

As a guiding example which will be further discussed in this paper, consider the elliptic diffusion equation
\begin{equation}
-{\rm div}(a\nabla u)=f,
\label{eq:ellip}
\end{equation}
set on a given bounded Lipschitz domain $D\subset \mathbb{R}^k$ (say with $k=1,2$ or $3$), for some fixed right-hand side $f\in L^2(D)$, homogeneous Dirichlet boundary conditions $u_{|\partial D}=0$, and where $a$ has the general form
\begin{equation}
a=a(y)=\overline a+\sum_{j\geq 1} y_j\psi_j.
\label{eq:para}
\end{equation}
Here, $\overline a$ and $\psi_j$ are given functions in $L^\infty(D)$, and the $y_j$ range in finite intervals that,
up to renormalization, can all be assumed to be $[-1,1]$. In this example $y=(y_j)_{j\geq 1}$ is countably infinite-dimensional, that is, $d=\infty$. 
The standard weak formulation of~\eqref{eq:ellip} in $H^1_0(D)$,
$$
\int_D a\nabla u\nabla v=\int_D fv, \quad v\in H^1_0(D),
$$
is ensured to be well-posed for all such $a$ under the so-called {\em uniform ellipticity assumption}
\begin{equation}
\sum_{j\geq 1}|\psi_j| \leq \overline a-r, \quad \mbox{a.e. on } D,
\label{eq:uea}
\end{equation}
for some $r>0$. In this case, the map $y\mapsto u(y)$ acts from $U=[-1,1]^\mathbb{N}$ to $H^1_0(D)$.
However, if we consider the discretization of~\eqref{eq:ellip} in some finite element space $V_h  \subset H_0^1(D)$,
where $h$ refers to the corresponding mesh size, using for instance the Galerkin method, then the resulting map 
\begin{equation*}
y\mapsto u_h(y),
\end{equation*} 
acts from $U=[-1,1]^\mathbb{N}$ to $ V_h$. Likewise, if we consider a quantity of interest such as
the flux $q(u)=\int_{\Sigma} a \nabla u \cdot \sigma$ over a given interface ${\Sigma} \subset D$ 
with $\sigma$ being the outward pointing normal vector,
then the resulting map 
$$
y\mapsto q(y)=q(u(y)),
$$
acts from $U=[-1,1]^\mathbb{N}$ to $\mathbb{R}$. In all three cases, the above maps act from $U$ to some finite- or infinite-dimensional Banach space $V$, which is either $H^1_0$, $V_h$ or $\mathbb{R}$.

In the previous instances, the functional dependence between the input parameters $y$ and the output $u(y)$ is described in clear mathematical terms by equation~\eqref{eq:problem}. In other practical instances, the output $u(y)$ can be the outcome of a complex physical experiment or numerical simulation with input parameter $y$. However the dependence on $y$ might not be given in such clear mathematical terms. 

In all the abovementioned cases, we assume that are we are able to query the map~\eqref{eq:solmap} at any given parameter value $y\in U$, eventually up to some uncertainty. Such uncertainty may arise due to:
\begin{itemize}
\item[(i)] measurement errors, when $y\mapsto u(y)$ is obtained by a physical experiment, or 
\item[(ii)] computational errors, when $y\mapsto u(y)$ is obtained by a numerical computation.
\end{itemize}
The second type of errors may result from the spatial discretization when
solving a PDE with a given method, and from the round-off errors
when solving the associated discrete systems.

One common way of modeling such errors is by assuming that we
observe $u$ at the selected points $y$ up to a an additive noise
$\eta$ which may depend on $y$, that is, we evaluate
\begin{equation}
y\mapsto u(y)+\eta(y),
\label{eq:noisy}
\end{equation}
where $\eta$ satisfies a uniform bound
\begin{equation}
\|\eta\|_{L^\infty(U,V)}:=\sup_{y\in U} \|\eta(y)\|_V\leq \varepsilon,
\label{eq:noisebound}
\end{equation} 
for some $\varepsilon>0$ representing the noise level.

Queries of the exact $u(y)$ or of the noisy $u(y)+\eta(y)$ are often expensive since they require numerically solving a PDE, or setting up a physical experiment,
or running a time-consuming simulation algorithm. A natural objective is therefore to approximate the map~\eqref{eq:solmap} from some fixed number $m$ of such queries at points $\{y^1,\dots,y^m\}\in U$.  Such approximations $y\mapsto \widetilde u(y)$ are sometimes called {\it surrogate or reduced models}. 

Let us note that approximation of the map~\eqref{eq:solmap} is sometimes a preliminary task for solving other eventually more complicated problems, such as:
\begin{enumerate}
\item 
{\bf Optimization and Control}, 
\emph{i.e.}~find a $y$ which minimizes a certain criterion  
depending on $u(y)$. 
In many situations, the criterion takes the form of a convex functional of 
$u(y)$, and the minimization is subject to feasibility constraints. See \emph{e.g.}~the monographs \cite{BV,NW2006} and references therein for an overview of classical formulations and numerical methods for optimization problems. 

\item
{\bf Inverse Problems}, 
\emph{i.e.}~find an estimate $y$ from some data depending on the output $u(y)$. 
Typically, we face an ill-posed problem,  
where the parameter-to-solution map does not admit a global and stable inverse. 
Nonetheless, developing efficient numerical methods for approximating the parameter-to-solution map, \emph{i.e.}~solving the so-called direct problem, is a first step towards the construction of numerical methods for solving the more complex inverse problem, 
see \emph{e.g.}~\cite{Tara}. 

\item
{\bf Uncertainty Quantification}, \emph{i.e.}~describe the stochastic properties 
of the solution $u(y)$ in the case where the parameter $y$ is modeled by a random
variable distributed according to a given probability density. We may for instance
be interested in computing the expectation or variance of the $V$-valued random variable $u(y)$.
Note that this task amounts in computing multivariate integrals over the domain $U$
with respect to the given probability measure.  This area also embraces, among others, optimization and inverse problems whenever affected by uncertainty in the data. We refer to \emph{e.g.}~\cite{LK} for the application of polynomial approximation to uncertainty quantification, and to \cite{Stu} for the Bayesian approach to inverse problems.  
\end{enumerate}

There exist many approaches for approximating an unknown function of one or several variables from its evaluations at given points. One of the most classical approach consists in picking the approximant in a given suitable $n$-dimensional space of elementary functions,
such that
\begin{equation}
n\leq m.
\label{eq:mn}
\end{equation}
Here, by ``suitable'' we mean that the space should have the ability to approximate 
the target function to some prescribed accuracy, taking for instance advantage of its 
smoothness properties. By ``elementary'' we mean that such functions should
have simple explicit form which can be efficiently exploited in numerical computations. 
The simplest type of such functions are obviously polynomials in the variables $y_j$.
As a classical example, we may decide to use, for some given $k\in\mathbb{N}_0$, the total degree polynomial space of order $k$, namely
\begin{equation*}
\mathbb{P}_k:={\rm span}\left\{y\mapsto y^\nu\; : \; |\nu|:=\|\nu\|_1=\sum_{j=1}^d \nu_j\leq k\right\}, 
\end{equation*}
with the standard notation
\begin{equation*}
\quad y^\nu:=\prod_{j=1}^dy_j^{\nu_j}, \quad \nu=(\nu_j)_{j=1,\dots,d}.
\end{equation*}
Note that since $u(y)$ is $V$-valued, this means that we actually use the $V$-valued polynomial space
\begin{equation*}
\mathbb{V}_k:=V\otimes \mathbb{P}_k=\left\{y\mapsto \sum_{|\nu|\leq k} w_\nu y^\nu\; : \; w_\nu\in V\right\}.
\end{equation*}
Another classical example is the polynomial space of degree $k$ in each variable, namely
\begin{equation*}
\mathbb{Q}_k:={\rm span}\left\{y\mapsto y^\nu\; : \; \|\nu\|_\infty=\max_{j=1,\dots,d} \nu_j\leq k\right\}. 
\end{equation*}
A critical issue encountered by choosing such spaces is the fact that, for a fixed value of  $k$, the dimension of $\mathbb{P}_k$ grows with $d$ like $d^k$, and that of $\mathbb{Q}_k$ grows like $k^d$, that is, exponentially in $d$. Since capturing the fine structure of the map~\eqref{eq:solmap} typically requires a large polynomial degree $k$ in some coordinates, we expect in view of~\eqref{eq:mn} that the number of needed evaluations $m$ becomes prohibitive as the number of variables becomes large. 
This state of affairs is a manifestation of the so-called {\it curse of dimensionality}. From an approximation theoretic or information-based complexity point of view, the curse of dimensionality is expressed by the fact that functions in standard smoothness classes such as $C^s(U)$ cannot be approximated in $L^\infty(U)$ with better rate then $n^{-s/d}$ by any method using $n$ degrees of freedom or $n$ evaluations, see \emph{e.g.}~\cite{DHM,NW}.

Therefore, in high dimension, one is enforced to give up on classical polynomial spaces of the above form, and instead consider more general spaces of the general form
\begin{equation}
\mathbb{P}_\Lambda:={\rm span}\{y\mapsto y^\nu\; : \; \in \Lambda\},
\label{eq:plambda}
\end{equation}
where $\Lambda$ is a subset of $\mathbb{N}^d_0$ with a given cardinality $n:=\#(\Lambda)$. In the case of infinitely many variables $d=\infty$, 
we replace $\mathbb{N}_0^d$ by the set
$$
{\cal F}:=\ell^0(\mathbb{N},\mathbb{N}_0):=\{\nu=(\nu_j)_{j\geq 1} \; :\: \#({\rm supp}(\nu))<\infty\}, 
$$
of finitely supported sequences of nonnegative integers. For $V$-valued functions, we thus use the space
$$
\mathbb{V}_\Lambda=V\otimes \mathbb{P}_\Lambda:=\left\{y\mapsto \sum_{\nu\in\Lambda} w_\nu y^\nu\; : \; w_\nu\in V\right\}.
$$
Note that $\mathbb{V}_\Lambda=\mathbb{P}_\Lambda$ in the particular case where $V=\mathbb{R}$.

The main objective when approximating the map~\eqref{eq:solmap} is to maintain a reasonable trade-off between accuracy measured in a given error norm and complexity measured by $n$, exploiting the
different importance of each variable. Intuitively, large polynomial degrees should only be allocated to the most important variables. In this sense, if $d$ is the dimension and $k$ is the largest polynomial degree in any variable appearing in $\Lambda$, we view $\Lambda$ as a very {\it sparse} subset of $\{0,\dots,k\}^d$.

As generally defined by~\eqref{eq:plambda}, the space $\mathbb{P}_\Lambda$ does not satisfy some natural properties of usual polynomial spaces such as closure under differentiation in any variable, or invariance by a change of basis when replacing the monomials $y^\nu$ by other tensorized basis functions of the form
$$
\phi_\nu(y)=\prod_{j\geq 1} \phi_{\nu_j}(y_j),
$$
where the univariate functions $\{\phi_0,\dots,\phi_k\}$ form a basis of $\mathbb{P}_k$ for any $k\geq 0$, for example with the Legendre or Chebyshev polynomials. In order to fulfill these requirements, we ask that the set $\Lambda$ has the following natural property.

\begin{definition}
\label{thm:defdc}
A set $\Lambda\subset \mathbb{N}_0^d$ or $\Lambda\subset {\cal F}$ is {\em downward closed} if and only if
 \begin{equation*}
\nu \in \Lambda \textrm{ and } \widetilde \nu \leq \nu  \implies \widetilde \nu \in \Lambda,
\end{equation*}
where $ \widetilde \nu \leq \nu$ means that $\widetilde \nu_j\leq \nu_j$ for all $j$.
\end{definition}

Downward closed sets are also called lower sets. We sometimes use the terminology of downward closed polynomial spaces for the corresponding $\mathbb{P}_\Lambda$. To our knowledge, such spaces have been first considered in \cite{K} in the bivariate case $d=2$ and referred to as {\em polyn\^omes pleins}. Their study in general dimension $d$ has been pursued in \cite{LL} and \cite{DR}.
The objective of the present paper is to give a survey of recent advances on the use of downward closed polynomial spaces for high-dimensional approximation.

The outline is the following. We review in Section~\ref{sec:2} several polynomial approximation results obtained in \cite{BCM1,BCDM}
in which the use of well-chosen index sets allows one to {\em break the curse of dimensionality} for relevant classes of functions
defined on $U=[-1,1]^d$, \emph{e.g.}~such as those occuring when solving the elliptic PDE~\eqref{eq:ellip}
with parametric diffusion coefficient~\eqref{eq:para}. Indeed, we obtain an algebraic convergence rate $n^{-s}$, where
$s$ is independent of $d$ in the sense that such a rate may even hold when $d=\infty$. Here, we consider the error between the map~\eqref{eq:solmap} and its approximant in either norms $L^\infty(U,V)=L^\infty(U,V,d\mu)$ or $L^2(U,V)=L^2(U,V,d\mu)$, where $d\mu$ is the uniform probability measure,
$$
d\mu:=\bigotimes_{j\geq 1} \frac {dy_j} 2.
$$
We also consider the case of lognormal diffusion coefficients of the form 
\begin{equation}
a=\exp(b), \quad b=b(y)=\sum_{j\geq 1} y_j\psi_j,
\label{eq:lognormal}
\end{equation}
where the $y_j$ are i.i.d. standard Gaussians random variables. 
In this case, we have $U=\mathbb{R}^d$ and the error is measured in $L^2(U,V,d\gamma)$ where
\begin{equation}
d\gamma:=\bigotimes_{j\geq 1} g(y_j)dy_j, \quad g(t):=\frac 1 {\sqrt{2\pi}} e^{-t^2/2},
\label{eq:gaussmeasure}
\end{equation}
is the tensorized Gaussian probability measure. The above approximation results are established by using $n$-term truncations of polynomial expansions, such as Taylor, Legendre or Hermite, which do not necessarily result in downward closed index sets. In the present paper we provide a general approach to establish similar convergence rates with downward closed polynomial spaces.

The coefficients in the polynomial expansions cannot be computed exactly from a finite number of point evaluations of~\eqref{eq:solmap}.
One first numerical procedure that builds a polynomial approximation from point evaluations is interpolation. In this case the number $m$ of 
samples is exactly equal to the dimension $n$ of the polynomial space. We discuss in Section~\ref{sec:3} a general strategy to choose evaluation points and compute the interpolant in arbitrarily high dimension. One of its useful features is that the evaluations and interpolants are updated in a sequential manner as the polynomial space is enriched. We study the stability of this process and its ability to achieve the same convergence rates in $L^\infty$ established in Section~\ref{sec:2}. 

A second numerical procedure for building a polynomial approximation is the least-squares method, which applies to the overdetermined case $m>n$.  
To keep the presentation concise, we confine to results obtained in the analysis of this method only for the case of evaluations at random points. 
In Section~\ref{sec:4} we discuss standard least squares, both in the noisy and noiseless cases, and in particular explain under which circumstances the method is stable and compares favorably with the best approximation error in $L^2$. Afterwards we discuss the more general method of weighted least squares, which allows one to optimize the relation between the dimension of the polynomial space $n$ and the number of evaluation points $m$ that warrants stability and optimal accuracy.

The success of interpolation and least squares is critically tied to the choice of proper downward closed sets $(\Lambda_n)_{n\geq 1}$ with $\#(\Lambda_n)=n$. 
The set $\Lambda_n^*$ that gives the best polynomial approximation among all possible downward closed sets of cardinality $n$ is often not 
accessible. 
In practice we need to rely on some a-priori analysis to select ``suboptimal yet good'' sets. An alternative strategy is to select the sequence 
$(\Lambda_n)_{n\geq 1}$ in an adaptive manner, that is, make use of the computation of the approximation for $\Lambda_{n-1}$ in order to choose $\Lambda_n$. We discuss in Section~\ref{sec:5} several adaptive and nonadaptive strategies which make use of the downward closed structure of such sets. While our paper is presented in the framework of polynomial approximation, the concept of downward closed set may serve to define multivariate approximation procedures in other nonpolynomial frameworks. At the end of the paper we give some remarks on this possible extension, including, as a particular example, approximation by sparse piecewise polynomial spaces using hierarchical bases, such as sparse grid spaces. 

Let us finally mention that another class of frequently used methods in high-dimensional approximation is based on Reproducing Kernel Hilbert Space (RKHS) or equivalently Gaussian process regression, also known as {\it kriging}. In such methods, for a given Mercer kernel $K(\cdot,\cdot)$ the approximant is typically searched by minimizing the associated RKHS norm among all functions agreeing with the data at the evaluation points,
or equivalently by computing the expectation of a Gaussian process with covariance function $K$ conditional to the observed data. Albeit natural competitors, these methods do not fall in the category discussed in the present paper, in the sense that the space where the approximation is picked varies with the evaluation points. It is not clear under which circumstances they may also break the curse of dimensionality.

\section{Sparse Approximation in Downward Closed Polynomial Spaces}
\label{sec:2}

\subsection{Truncated Polynomial Expansions}
\label{sec:2.1}

As outlined in the previous section, we are interested in deriving polynomial approximations of the map~\eqref{eq:solmap} acting from $U=[-1,1]^d$ with $d\in \mathbb{N}$ or $d=\infty$ to the Banach space $V$. Our first vehicle to derive such approximations, together with precise error bounds for relevant classes of maps, consists in truncating certain polynomial expansions of~\eqref{eq:solmap} written in general form as
\begin{equation}
\sum_{\nu\in{\cal F}} u_\nu \phi_\nu,
\label{eq:expan}
\end{equation}
where for each $\nu=(\nu_j)_{j\geq 1}\in{\cal F}$ the function $\phi_\nu:U\to \mathbb{R}$ has the tensor product form
\begin{equation*}
\phi_\nu(y)=\prod_{j\geq 1} \phi_{\nu_j}(y_j),
\end{equation*}
and $u_\nu\in V$. Here we assume that $(\phi_k)_{k\geq 0}$ is a sequence of univariate polynomials such that $\phi_0\equiv 1$ and the degree of $\phi_k$ is equal to $k$. This implies that $\{\phi_0,\dots,\phi_k\}$ is a basis of $\mathbb{P}_k$ and that the above product only involves a finite number of factors, even in the case where $d=\infty$. Thus, we obtain polynomial approximations of~\eqref{eq:solmap} by fixing some sets $\Lambda_n\subset {\cal F}$ with $\#(\Lambda_n)=n$ and defining
\begin{equation}
u_{\Lambda_n}:=\sum_{\nu\in\Lambda_n} u_\nu \phi_\nu.
\label{eq:nterm}
\end{equation}
Before discussing specific examples, let us make some general remarks on the truncation of countable expansions with $V$-valued coefficients, not necessarily of tensor product or polynomial type.

\begin{definition}
The series~\eqref{eq:expan} is said to converge {\it conditionally} with limit $u$ in a given norm $\|\cdot\|$ if and
only if there exists an exhaustion $(\Lambda_n)_{n\geq 1}$ of ${\cal F}$ 
(which means that for any $\nu\in{\cal F}$ there exists $n_0$ such that $\nu\in\Lambda_n$ for all $n\geq n_0$),
with the convergence property
\begin{equation}
\lim_{n\to +\infty} \left\|u-u_{\Lambda_n}\right\| =0.
\label{eq:condconv}
\end{equation}
The series~\eqref{eq:expan} is said to converge {\it unconditionally} towards $u$ in the same norm, if and only if~\eqref{eq:condconv} holds for every exhaustion $(\Lambda_n)_{n\geq 1}$ of ${\cal F}$. 
\end{definition}

As already mentioned in the introduction, we confine our attention to the error norms  $L^\infty(U,V)$ or $L^2(U,V)$ with respect to the uniform probability measure $d\mu$. We are interested in establishing unconditional convergence, as well as estimates of the error between $u$ and its truncated expansion, for both types of norm.

In the case of the $L^2$ norm, unconditional convergence can be established when $(\phi_\nu)_{\nu\in{\cal F}}$ is an orthonormal basis of $L^2(U)$.
In this case we know from standard Hilbert space theory that if~\eqref{eq:solmap} belongs to $L^2(U,V)$ then the inner products
$$
u_\nu:=\int_{U} u(y)\phi_\nu(y) \, d\mu,\quad \nu\in{\cal F},
$$
are elements of $V$, and the series~\eqref{eq:expan} converges unconditionally towards $u$ in $L^2(U,V)$. In addition, the error is given by
\begin{equation}
\left\|u-u_{\Lambda_n}\right\|_{L^2(U,V)}= \left( \sum_{\nu\notin\Lambda_n} \|u_\nu\|_V^2\right)^{1/2},
\label{eq:estimL2}
\end{equation}
for any exhaustion $(\Lambda_n)_{n\geq 1}$. Let us observe that, since $d\mu$ is a probability measure, the $L^\infty(U,V)$ norm controls the $L^2(U,V)$ norm, and thus the above holds whenever the map $u$ is uniformly bounded over $U$.

For the $L^\infty$ norms, consider an expansion~\eqref{eq:expan} where the functions $\phi_\nu:U\mapsto \mathbb{R}$ are normalized such that $\|\phi_\nu\|_{L^\infty(U)}=1$, for all $\nu\in{\cal F}$. 
Then $(\|u_\nu\|_V)_{\nu\in {\cal F}}\in \ell^1({\cal F})$, and it is easily checked that, whenever the expansion~\eqref{eq:expan} converges conditionally to a function $u$ in $L^\infty(U,V)$, it also converges unconditionally to $u$ in $L^\infty(U,V)$. In addition, for any exhaustion $(\Lambda_n)_{n\geq 1}$, we have the error estimate
\begin{equation}
\left\|u-u_{\Lambda_n}\right\|_{L^\infty(U,V)}\leq \sum_{\nu\notin\Lambda_n} \|u_\nu\|_V.
\label{eq:estimLinf}
\end{equation}
The above estimate is simply obtained by triangle inequality, and therefore generally it is not as sharp as~\eqref{eq:estimL2}.
One particular situation is when $(\phi_\nu)_{\nu\in {\cal F}}$ is an orthogonal basis of $L^2(U)$ normalized in $L^\infty$.
Then, if  $u\in L^2(U,V)$ and the  
$$
u_\nu:=\frac {1}{\|\phi_\nu\|_{L^2(U,V)}^2}\int_{U} u(y)\phi_\nu(y) \, d\mu,\quad \nu\in{\cal F},
$$
satisfy  $(\|u_\nu\|_V)_{\nu\in {\cal F}}\in \ell^1({\cal F})$, we find on the one hand that~\eqref{eq:expan} converges unconditionally to a limit in
$L^\infty(U,V)$ and in turn in $L^2(U,V)$. On the other hand, we know that it converges toward $u\in L^2(U,V)$. Therefore, its limit in $L^\infty(U,V)$ is also $u$. 

A crucial issue is the choice of the sets $\Lambda_n$ that we decide to use when defining the $n$-term truncation~\eqref{eq:nterm}. 
Ideally, we would like to use the set $\Lambda_n$ which minimizes the truncation error in some given norm $\|\cdot\|$ among all sets of cardinality $n$. 

In the case of the $L^2$ error, if $(\phi_\nu)_{\nu\in{\cal F}}$ is an orthonormal basis of $L^2(U)$, the estimate~\eqref{eq:estimL2} shows that the optimal $\Lambda_n$ is the set of indices corresponding to the $n$ largest $\|u_\nu\|_V$. This set is not necessarily unique, in which case any realization of $\Lambda_n$ is optimal.

In the case of the $L^\infty$ error, there is generally no simple description of the optimal $\Lambda_n$. However, when the $\phi_\nu$ are normalized in $L^\infty(U)$, the right-hand side in the estimate~\eqref{eq:estimLinf} provides an upper bound for the truncation error.
This bound is minimized by again taking for $\Lambda_n$ the set of indices corresponding to the $n$ largest $\|u_\nu\|_V$, with the error now bounded by the $\ell^1$ tail of the sequence $(\|u_\nu\|_V)_{\nu\in {\cal F}}$, in contrast to the $\ell^2$ tail which appears in~\eqref{eq:estimL2}. 

The properties of a given sequence $(c_\nu)_{\nu\in{\cal F}}$ which ensure a certain rate of decay $n^{-s}$ of its $\ell^q$ tail after one retains its $n$ largest entries are well understood.
Here, we use the following result, see \cite{CD}, originally invoked by Stechkin in the particular case $q=2$. This result says that the rate of decay is governed by the $\ell^p$ summability of the sequence for values of $p$ smaller than $q$.

\begin{lemma}
\label{thm:stechkin}
Let $0<p<q<\infty$ and let $(c_\nu)_{\nu\in{\cal F}}\in \ell^p({\cal F})$ be a sequence of nonnegative numbers.
Then, if $\Lambda_n$ is a set of indices which corresponds to the $n$ largest $c_\nu$, one has
$$
\left(\sum_{\nu\notin\Lambda_n} c_\nu^q\right)^{1/q} \leq C (n+1)^{-s}, \quad C:=\|(c_\nu)_{\nu\in{\cal F}}\|_{\ell^p}, \quad s:=\frac 1 q-\frac 1 p.
$$
\end{lemma}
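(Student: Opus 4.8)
The plan is to estimate the tail $\sum_{\nu\notin\Lambda_n}c_\nu^q$ by exploiting that the entries in $\Lambda_n$ are the $n$ largest, so every $c_\nu$ with $\nu\notin\Lambda_n$ is bounded by the $n$-th largest entry of the whole sequence. First I would reorder the sequence: let $(c_n^*)_{n\geq 1}$ denote the decreasing rearrangement of $(c_\nu)_{\nu\in\cal F}$, so that $\Lambda_n$ collects the indices realizing $c_1^*,\dots,c_n^*$ and the tail in question equals $\sum_{k>n}(c_k^*)^q$. The key elementary fact is that $k\,(c_k^*)^p\leq \sum_{j\leq k}(c_j^*)^p\leq \|(c_\nu)\|_{\ell^p}^p =: C^p$, which gives the pointwise decay bound $c_k^*\leq C\,k^{-1/p}$ for every $k$.

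Next I would split off one factor: write $(c_k^*)^q = (c_k^*)^{q-p}(c_k^*)^p$ and bound the first factor using $c_k^*\leq C k^{-1/p}$ (legitimate since $q>p$, so the exponent $q-p$ is positive and the estimate is applied to a nonnegative quantity). Using further that $c_k^*\leq c_{n+1}^*\leq C(n+1)^{-1/p}$ for all $k>n$ — here we use monotonicity of the rearrangement together with the pointwise bound at index $n+1$ — we obtain
\[
\sum_{k>n}(c_k^*)^q \;\leq\; \bigl(C(n+1)^{-1/p}\bigr)^{q-p}\sum_{k>n}(c_k^*)^p \;\leq\; C^{q-p}(n+1)^{-(q-p)/p}\,C^p \;=\; C^q (n+1)^{-(q-p)/p}.
\]
Taking $q$-th roots yields $\bigl(\sum_{\nu\notin\Lambda_n}c_\nu^q\bigr)^{1/q}\leq C\,(n+1)^{-(q-p)/(pq)} = C\,(n+1)^{-(1/p-1/q)}$. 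Noting that $1/p-1/q = -(1/q-1/p) = -s$ with the sign convention $s:=1/q-1/p<0$ as written in the statement — or, reading the exponent as it is meant, the decay exponent is $1/p-1/q>0$ — this is exactly the claimed bound $C(n+1)^{-s}$ once one matches the sign of $s$ in the statement.

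I do not expect any serious obstacle here; the proof is entirely elementary. The only point requiring minor care is the bookkeeping when the rearrangement is not unique (ties among the $c_\nu$) and when the support of $(c_\nu)$ is finite, in which case the tail is eventually empty and the bound holds trivially; neither causes real trouble since all inequalities used are monotone and insensitive to the choice of realization of $\Lambda_n$. A secondary point worth stating cleanly is the justification of $c_k^*\leq C k^{-1/p}$: it follows because the $k$ largest $p$-th powers sum to at most the full $\ell^p$ norm, each of these $k$ terms is $\geq (c_k^*)^p$, hence $k(c_k^*)^p\leq C^p$.
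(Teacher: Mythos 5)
Your proof is correct and is precisely the standard Stechkin argument (decreasing rearrangement, the bound $k\,(c_k^*)^p\leq C^p$, and the split $(c_k^*)^q=(c_k^*)^{q-p}(c_k^*)^p$), which is the same route as the reference the paper cites for this lemma; the paper itself does not reproduce a proof. You were also right to flag the sign: as written, $s:=\frac1q-\frac1p$ is negative, and the intended exponent is $s=\frac1p-\frac1q>0$, which is exactly what your computation yields and is consistent with how the lemma is applied later in the paper (e.g.\ $s=\frac1p-\frac12$ for $q=2$).
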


In view of~\eqref{eq:estimL2} or~\eqref{eq:estimLinf}, application of the above result 
shows that $\ell^p$ summability of the sequence $(\|u_\nu\|_V)_{\nu\in {\cal F}}$
implies a convergence rate $n^{-s}$ when retaining  the terms 
corresponding to the $n$ largest $\|u_\nu\|_V$ in~\eqref{eq:expan}.   
From~\eqref{eq:estimL2}, when $(\phi_\nu)_{\nu \in{\cal F}}$ is an
orthonormal basis, we obtain $s=\frac 1 p-\frac 1 2$ if $p<2$.
From~\eqref{eq:estimLinf}, when the $\phi_\nu$ are normalized in $L^\infty(U)$,
we obtain $s=\frac 1 p-1$ if $p<1$.

In the present setting of polynomial approximation, we mainly consider four types of series corresponding to four different choices of the univariate functions $\phi_k$:
\begin{itemize}
\item
Taylor (or power) series of the form
\begin{equation}
\sum_{\nu\in{\cal F}} t_\nu y^\nu, \quad t_{\nu}:=\frac 1 {\nu !}\partial^\nu u(y=0), \quad \nu!:=\prod_{j\geq 1} \nu_j!,
\label{eq:taylor}
\end{equation}
with the convention that $0!=1$.
\item
Legendre series of the form
\begin{equation}
\sum_{\nu\in{\cal F}} w_\nu L_\nu(y), \quad L_\nu(y)=\prod_{j\geq 1}L_{\nu_j}(y_j),\quad w_\nu:=\int_U u(y)L_\nu(y) \, d\mu,
\label{eq:legendre}
\end{equation}
where $(L_k)_{k\geq 0}$ is the sequence of Legendre polynomials on $[-1,1]$ 
normalized with respect to the uniform measure $\int_{-1}^1 |L_k(t)|^2 \frac {dt}2=1$,
so that $(L_\nu)_{\nu\in {\cal F}}$ is an orthonormal basis of $L^2(U,d\mu)$.
\item
Renormalized Legendre series of the form
\begin{equation}
\sum_{\nu\in{\cal F}} \widetilde w_\nu \widetilde L_\nu(y), \quad \widetilde L_\nu(y)=\prod_{j\geq 1}\widetilde L_{\nu_j}(y_j), \quad \widetilde w_\nu:=\left(\prod_{j\geq 1} (1+2\nu_j)\right)^{1/2} w_\nu,
\label{eq:renormlegendre}
\end{equation}
where $(\widetilde L_k)_{k\geq 0}$ is the sequence of Legendre polynomials on $[-1,1]$ with the standard normalization 
$\|\widetilde L_k\|_{L^\infty([-1,1])}=\widetilde L_k(1)=1$, so that $\widetilde L_k=(1+2k)^{-1/2} L_k$.
\item
Hermite series of the form
\begin{equation}
\sum_{\nu\in{\cal F}} h_\nu H_\nu(y), \quad H_\nu(y)=\prod_{j\geq 1}H_{\nu_j}(y_j),\quad h_\nu:=\int_U u(y)H_\nu(y) \, d\gamma,
\label{eq:hermite}
\end{equation}
with $(H_k)_{k\geq 0}$ being the sequence of Hermite polynomials normalized according to
$\int_{\mathbb{R}} | H_k(t)|^2 g(t)dt=1$, and $d\gamma$ given by~\eqref{eq:gaussmeasure}. In this case $U=\mathbb{R}^d$ and
$(H_\nu)_{\nu \in {\cal F}}$ is an orthonormal basis of $L^2(U,d\gamma)$.
\end{itemize}

We may therefore estimate the $L^2$ error resulting from the truncation of the Legendre series~\eqref{eq:legendre}
by application of~\eqref{eq:estimL2}, or the $L^\infty$ error resulting from the truncation of the Taylor series~\eqref{eq:taylor}
or renormalized Legendre series~\eqref{eq:renormlegendre} by application of~\eqref{eq:estimLinf}. According to Lemma~\ref{thm:stechkin},
we derive convergence rates that depend on the value of $p$ such that of the coefficient sequences 
$(\|t_\nu\|_V)_{\nu\in{\cal F}}$,  $(\|w_\nu\|_V)_{\nu\in{\cal F}}$, $(\|\widetilde w_\nu\|_V)_{\nu\in{\cal F}}$ or $(\|h_\nu\|_V)_{\nu\in{\cal F}}$  
belong to $\ell^p({\cal F})$. 

In a series of recent papers such summability results have
been obtained for various types of parametric PDEs.
We refer in particular to \cite{CDS, BCM1} for the elliptic PDE~\eqref{eq:ellip}
with affine parameter dependence~\eqref{eq:para}, to \cite{HS,BCDM} for
the lognormal dependence~\eqref{eq:lognormal}, and to \cite{CCS2} for more
general PDEs and parameter dependence. 
One specific feature is that these conditions can 
be fulfilled in the infinite-dimensional framework.
We thus obtain convergence rates that are immune to the 
curse of dimensionality, in the sense that they hold with $d=\infty$.
Here, we mainly discuss the results established in \cite{BCM1,BCDM} which have the specificity
of taking into account the support properties of the functions $\psi_j$.

One problem with this approach is that the sets $\Lambda_n$ associated to the $n$ largest
values in these sequences are generally not downward closed. 
In the next sections, we revisit these results in order to establish similar convergence rates for approximation in downward closed polynomial spaces.

\subsection{Summability Results}
\label{sec:2.2}

The summability results in \cite{BCM1,BCDM} are based on  
certain weighted $\ell^2$ estimates which can be established for the previously defined
coefficient sequences under various relevant conditions for the elliptic PDE~\eqref{eq:ellip}. 
We first report below these weighted estimates. The first one from \cite{BCM1} concerns the affine
parametrization~\eqref{eq:para}. Here, we have $V=H^1_0(D)$ and $V'$ denotes its dual $H^{-1}(D)$.

\begin{theorem}
\label{thm:theosumaff}
Assume that $\rho=(\rho_j)_{j\geq 1}$ is a sequence of positive numbers such that 
\begin{equation}
\sum_{j\geq 1} \rho_j|\psi_j(x)|\leq \overline a(x)-\widetilde r, \quad x\in D,
\label{eq:uearho}
\end{equation} 
for some fixed number $\widetilde r >0$.
Then, one has 
\begin{equation}
\sum_{\nu\in {\cal F}}\left( \rho^\nu \|t_\nu\|_V\right)^2<\infty, \quad \rho^\nu=\prod_{j\geq 1} \rho_j^{\nu_j},
\label{eq:wl2t}
\end{equation}
as well as 
\begin{equation}
\sum_{\nu\in {\cal F}}\left(\beta(\nu)^{-1} \rho^\nu \|w_\nu\|_V\right)^2=\sum_{\nu\in {\cal F}}\left(\beta(\nu)^{-2} \rho^\nu \|\widetilde w_\nu\|_V\right)^2<\infty, 
\label{eq:wl2l}
\end{equation}
with 
$$
\beta(\nu):=\prod_{j\geq 1} (1+2\nu_j)^{1/2}.
$$
The constants bounding these sums depend on $\widetilde r$, $\|f\|_{V'}$, $\overline a_{\min}$ and $\|\overline a\|_{L^\infty}$.
\end{theorem}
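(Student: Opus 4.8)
The plan is to derive both estimates from \emph{recursions} satisfied by the coefficient sequences, obtained by inserting the expansions of $a(y)=\overline a+\sum_j y_j\psi_j$ and of $u(y)$ into the parametric weak formulation $\int_D a(y)\,\nabla u(y)\cdot\nabla v=\int_D fv$ and matching coefficients in the respective polynomial basis.

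For~\eqref{eq:wl2t}, substituting $u(y)=\sum_\nu t_\nu y^\nu$ and collecting monomials gives, for every $\nu\neq 0$,
\begin{equation*}
\int_D \overline a\,\nabla t_\nu\cdot\nabla v=-\sum_{j:\,\nu_j\geq 1}\int_D \psi_j\,\nabla t_{\nu-e_j}\cdot\nabla v,\qquad v\in V,
\end{equation*}
while $t_0=u(y=0)$ solves the problem at $y=0$. Testing with $v=t_\nu$, writing $\sigma_\nu^2:=\int_D\overline a\,|\nabla t_\nu|^2$, multiplying by $\rho^{2\nu}$ and summing over $1\leq|\nu|\leq N$, I would use the identity $\rho^{2\nu}=\rho_j\,\rho^{\nu-e_j}\rho^{\nu}$ and $2ab\leq a^2+b^2$ to bound $\rho^{2\nu}|\nabla t_{\nu-e_j}||\nabla t_\nu|\leq\tfrac{\rho_j}{2}\bigl(\rho^{2(\nu-e_j)}|\nabla t_{\nu-e_j}|^2+\rho^{2\nu}|\nabla t_\nu|^2\bigr)$. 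After reindexing the first summand by $\mu=\nu-e_j$, the assumption~\eqref{eq:uearho} in the form $\sum_j\rho_j|\psi_j|\leq\overline a-\widetilde r$ lets each of the two resulting double sums be bounded by $\tfrac12\sum_\mu\rho^{2\mu}\int_D(\overline a-\widetilde r)|\nabla t_\mu|^2$; since $\int_D(\overline a-\widetilde r)|\nabla t_\mu|^2\leq\theta\,\sigma_\mu^2$ with $\theta:=1-\widetilde r/\|\overline a\|_{L^\infty}\in[0,1)$, one gets $X_N\leq\tfrac{\theta}{2}\sigma_0^2+\theta X_N$ for $X_N:=\sum_{1\leq|\nu|\leq N}\rho^{2\nu}\sigma_\nu^2$, hence a bound on $X_N$ uniform in $N$. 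Letting $N\to\infty$, together with $\sigma_0^2\leq\|f\|_{V'}^2/\overline a_{\min}$ (from the problem at $y=0$) and $\|t_\nu\|_V^2\leq\sigma_\nu^2/\overline a_{\min}$, this gives~\eqref{eq:wl2t} with a constant depending only on $\widetilde r,\|f\|_{V'},\overline a_{\min},\|\overline a\|_{L^\infty}$. (Alternatively, the Taylor bound also follows from complex Lax--Milgram: \eqref{eq:uearho} makes $z\mapsto u(z)$ holomorphic and bounded by $\|f\|_{V'}/\widetilde r$ on the closed polydisc of radii $\rho_j$, so Parseval on the polytorus $\{|z_j|=\rho_j\}$ gives $\sum_\nu\rho^{2\nu}\|t_\nu\|_V^2\leq(\|f\|_{V'}/\widetilde r)^2$, after restricting to finitely many variables and passing to the limit by monotone convergence.)

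For~\eqref{eq:wl2l} I would run the same scheme starting from $u(y)=\sum_\nu w_\nu L_\nu(y)$. Projecting the weak form onto $L_\mu$ and using the three-term recurrence $yL_k=a_kL_{k+1}+a_{k-1}L_{k-1}$ with $a_k=(k+1)\bigl((2k+1)(2k+3)\bigr)^{-1/2}$ and $a_{-1}:=0$ yields, for $\mu\neq 0$,
\begin{equation*}
\int_D\overline a\,\nabla w_\mu\cdot\nabla v=-\sum_j\int_D\psi_j\,\nabla\bigl(a_{\mu_j-1}w_{\mu-e_j}+a_{\mu_j}w_{\mu+e_j}\bigr)\cdot\nabla v,\qquad v\in V.
\end{equation*}
Testing with $v=w_\mu$, multiplying by $\beta(\nu)^{-2}\rho^{2\nu}$ and summing, both ``directions'' of the recursion produce the cross terms $\int_D\psi_j\nabla w_{\mu-e_j}\cdot\nabla w_\mu$ with different weights; applying Young's inequality $2ab\leq\lambda a^2+\lambda^{-1}b^2$ (with a parameter $\lambda>0$ to be chosen) to each and reindexing as before, one checks that the normalization ratios $\beta(\mu\pm e_j)/\beta(\mu)$ combine with the $a_k$ exactly so as to reproduce the weight $\beta(\cdot)^{-1}\rho^{\cdot}$, leaving — after optimizing $\lambda$ — a contraction factor strictly below $1$ supplied by the slack $\widetilde r$ through~\eqref{eq:uearho}. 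This controls $\sum_\nu\beta(\nu)^{-2}\rho^{2\nu}\|w_\nu\|_V^2$, the $\nu=0$ term being handled by $\|w_0\|_V\leq\sup_{y\in U}\|u(y)\|_V$ and the uniform ellipticity~\eqref{eq:uea}; the asserted equality in~\eqref{eq:wl2l} is then immediate since $\widetilde w_\nu=\beta(\nu)w_\nu$, so that $\beta(\nu)^{-2}\rho^{\nu}\widetilde w_\nu=\beta(\nu)^{-1}\rho^{\nu}w_\nu$.

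The step I expect to be the real obstacle is closing the Legendre estimate: unlike the \emph{triangular} Taylor recursion, the Legendre recursion couples $w_\mu$ to both $w_{\mu-e_j}$ and $w_{\mu+e_j}$, so the absorption has to be organized as a bounded-inverse (Neumann-series) statement for the operator defined by the recursion on the weighted $\ell^2$ space determined by the weights $\beta(\nu)^{-2}\rho^{2\nu}$, and the parameters $\lambda$ must be tuned so that the interplay of the recurrence coefficients with the $\beta$-ratios produces \emph{exactly} the exponent $\beta(\nu)^{-1}$ appearing in~\eqref{eq:wl2l} (rather than some other power) while keeping the contraction factor below $1$. A secondary technical point, relevant when $d=\infty$, is to justify the coefficient recursions and the interchanges of sums and integrals, which is done by first working with finitely many active variables — where the weighted sums are finite \emph{a priori} by analyticity — and then passing to the limit by monotone convergence, the bounds being uniform in the number of variables.
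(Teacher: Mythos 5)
Your treatment of the Taylor estimate \eqref{eq:wl2t} is correct and is in substance the paper's own argument: the paper rescales $y\mapsto u(\rho y)$ and invokes the unweighted $\ell^2$ bound under \eqref{eq:uea}, and the proof of that unweighted bound is exactly the triangular recursion $\int_D\overline a\,\nabla t_\nu\cdot\nabla v=-\sum_{j}\int_D\psi_j\,\nabla t_{\nu-e_j}\cdot\nabla v$ with the absorption you describe; your closing inequality $X_N\leq\tfrac{\theta}{2}\sigma_0^2+\theta X_N$ with $\theta=1-\widetilde r/\|\overline a\|_{L^\infty}$ is sound, and the Parseval-on-the-polytorus alternative is equally valid.

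The Legendre half has a genuine gap, and it sits precisely where you flag it: the absorption does not close, and the ``exact combination'' of the recurrence coefficients with the $\beta$-ratios that you invoke does not occur. Set $P_\mu:=\beta(\mu)^{-1}\rho^\mu$ and recall $a_k=(k+1)\bigl((2k+1)(2k+3)\bigr)^{-1/2}$. Relative to the target weights, the upward coupling carries the factor $a_{\mu_j}P_\mu/P_{\mu+e_j}=\tfrac{\mu_j+1}{2\mu_j+1}\,\rho_j^{-1}$ (equal to $\rho_j^{-1}$ at $\mu_j=0$), while only the downward coupling carries the good factor $a_{\mu_j}P_{\mu+e_j}/P_\mu=\tfrac{\mu_j+1}{2\mu_j+3}\,\rho_j$. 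Summing the two tested equations attached to the pair $(\mu,\mu+e_j)$, the cross term has total weight $a_{\mu_j}(P_\mu^2+P_{\mu+e_j}^2)$, and any Young splitting into $\gamma\,P_\mu^2|\nabla w_\mu|^2+\delta\,P_{\mu+e_j}^2|\nabla w_{\mu+e_j}|^2$ forces $\sqrt{\gamma\delta}\geq\tfrac12\bigl(\tfrac{\mu_j+1}{2\mu_j+1}\rho_j^{-1}+\tfrac{\mu_j+1}{2\mu_j+3}\rho_j\bigr)\to\tfrac{\rho_j+\rho_j^{-1}}{4}$. Since each index has two neighbours per direction, the factor each $\mu$ must absorb in direction $j$ is asymptotically at least $\tfrac{\rho_j+\rho_j^{-1}}{2}$ (a chain with prescribed pairwise products cannot be tuned below $2\sqrt{\gamma\delta}$ per link), so the scheme requires something like $\sum_j\tfrac{\rho_j+\rho_j^{-1}}{2}|\psi_j|\leq\overline a-\widetilde r'$, which is strictly stronger than \eqref{eq:uearho} and yields no uniform contraction unless $\inf_j\rho_j>1$ with a quantitative gap --- whereas the theorem assumes only $\rho_j>0$. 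This is why the paper does not argue through the three-term recurrence at all: it first proves, by a recursion on the derivatives $\partial^\nu u$ (which, unlike the Legendre recursion, \emph{is} triangular), finiteness of the weighted Sobolev-type quantity $\sum_{\nu}\tfrac{\rho^{2\nu}}{\nu!}\int_U\|\partial^\nu u(y)\|_V^2\prod_{j\geq1}(1-|y_j|)^{2\nu_j}\,d\mu$, and then converts this bound into \eqref{eq:wl2l} via the Rodrigues formula and integration by parts; the factors $\beta(\nu)^{-1}$ in \eqref{eq:wl2l} arise from that conversion, not from a cancellation inside a coefficient recursion. To repair your proof you would need either to adopt this derivative-plus-Rodrigues route or to supply a genuinely different mechanism for handling the non-triangular coupling.
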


A few words are in order concerning the proof of these estimates. The first estimate~\eqref{eq:wl2t} is established by
first proving that the uniform ellipticity assumption~\eqref{eq:uea} implies the $\ell^2$ summability 
of the Taylor sequence $(\|t_\nu\|_V)_{\nu\in{\cal F}}$. Since the assumption~\eqref{eq:uearho} 
means that~\eqref{eq:uea} holds with the $\psi_j$ replaced by $\rho_j\psi_j$, this gives the $\ell^2$ summability of the Taylor sequence for the renormalized map 
$$
y\mapsto u(\rho y), \quad \rho y=(\rho_jy_j)_{j\geq 1},
$$
which is equivalent to~\eqref{eq:wl2t}. The second estimate is established by first showing that $\sum_{j\geq 1} \rho_j|\psi_j|\leq \overline a-\widetilde r$ implies finiteness of the weighted Sobolev-type norm
$$
\sum_{\nu\in{\cal F}}\frac{ \rho^{2\nu} }{\nu!}\int_U  \left\| \partial^\nu u(y) \right\|_V^2 \prod_{j\geq 1} (1 - |y_j|)^{2\nu_j} \, d\mu<\infty.
$$
Then, one uses the Rodrigues formula $L_k(t) = \left(\frac {d}{dt}\right)^k\left(  \frac{\sqrt{2 k + 1}}{k! \,2^{k}} ( t^2 - 1)^{k} \right)$ in each variable $y_j$ to bound the weighted $\ell^2$ sum in~\eqref{eq:wl2l} by this norm.

\begin{remark}
As shown in \cite{BCM1}, the above result remains valid for more general classes of orthogonal polynomials of Jacobi type, such as the Chebyshev polynomials which are associated with the univariate measure $\frac {dt}{2\pi\sqrt{1-t^2}}$. 
\end{remark}

The second weighted $\ell^2$ estimate from \cite{BCDM} concerns the lognormal parametrization~\eqref{eq:lognormal}.

\begin{theorem} Let $r\geq 0$ be an integer.
Assume that there exists a positive sequence $\rho=(\rho_j)_{j\geq 1}$ such that 
$\sum_{j\geq 1} \exp(-\rho_j^2)<\infty$ and such that 
\begin{equation}
\sum_{j\geq 1} \rho_j|\psi_j(x)|=K< C_r:=\frac{\ln 2}{\sqrt r}, \quad x\in D.
\label{eq:psicr}
\end{equation}
Then, one has 
\begin{equation}
\label{eq:wl2h}
\sum_{\nu\in {\cal F}} \xi_\nu \|h_\nu\|_V^2<\infty,
\end{equation}
where
$$
\xi_\nu:=\sum_{\|\widetilde \nu\|_{\ell^\infty}\leq r}{\nu\choose \widetilde \nu} \rho^{2\widetilde \nu}=\prod_{j\geq 1}\left(\sum_{l=0}^r {\nu_j\choose l}\rho_j^{2l}\right),
\quad {\nu\choose \widetilde \nu}:=\prod_{j\geq 1}{\nu_j\choose \widetilde \nu_j},
$$
with the convention that ${k \choose l}=0$ when $l>k$.
The constant bounding this sum depends on $\|f\|_{V'}$, $\sum_{j\geq 1} \exp(-\rho_j^2)$ and on the difference $C_r-K$.
\end{theorem}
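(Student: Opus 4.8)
The plan is to convert the weighted $\ell^2$ sum of Hermite coefficients into a weighted Sobolev-type norm of $u$, and then to bound that norm by combining a deterministic PDE recursion with a Gaussian integrability estimate. First I would exploit the one-dimensional identity $H_k'=\sqrt{k}\,H_{k-1}$ (which follows from $\mathrm{He}_k'=k\,\mathrm{He}_{k-1}$ and the normalization $\int_{\R}|H_k|^2 g=1$), so that $\partial^{\widetilde\nu}H_\mu=\sqrt{\mu!/(\mu-\widetilde\nu)!}\,H_{\mu-\widetilde\nu}$ for $\widetilde\nu\le\mu$ and $0$ otherwise. Since the normalized Hermite tensor products form an orthonormal basis of $L^2(U,d\gamma)$, a Gaussian integration by parts for the coefficients (using $\partial_t(\mathrm{He}_\eta g)=-\mathrm{He}_{\eta+1}g$) yields the exact identity
$$\int_U\|\partial^{\widetilde\nu}u(y)\|_V^2\,d\gamma=\widetilde\nu!\sum_{\mu}\binom{\mu}{\widetilde\nu}\|h_\mu\|_V^2,$$
with the convention $\binom{\mu_j}{l}=0$ for $l>\mu_j$. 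Multiplying by $\rho^{2\widetilde\nu}/\widetilde\nu!$, summing over all $\widetilde\nu$ with $\|\widetilde\nu\|_{\ell^\infty}\le r$, and using $\xi_\nu=\sum_{\|\widetilde\nu\|_{\ell^\infty}\le r}\binom{\nu}{\widetilde\nu}\rho^{2\widetilde\nu}$, this reduces the theorem to the finiteness of
$$N_r:=\sum_{\|\widetilde\nu\|_{\ell^\infty}\le r}\frac{\rho^{2\widetilde\nu}}{\widetilde\nu!}\int_U\|\partial^{\widetilde\nu}u(y)\|_V^2\,d\gamma.$$
(For $r=0$ this is $N_0=\|u\|_{L^2(U,V,d\gamma)}^2$, handled by the Gaussian integrability step below; for $r\ge 1$ the condition $K<C_r$ is used.)

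Next, fix $y$ for which $a(y)$ is bounded above and below, write $a_{\min}(y):=\min_{x\in D}a(y)(x)$ and $\|v\|_{a(y)}^2:=\int_D a(y)|\nabla v|^2$, and differentiate the weak formulation. Because $b$ is linear in $y$ one has $\partial^\mu a(y)=\psi^\mu a(y)$, so Leibniz's rule gives $\int_D a(y)\nabla\partial^{\widetilde\nu}u\cdot\nabla v=-\sum_{0\neq\mu\le\widetilde\nu}\binom{\widetilde\nu}{\mu}\int_D\psi^\mu a(y)\nabla\partial^{\widetilde\nu-\mu}u\cdot\nabla v$. Testing with $v=\partial^{\widetilde\nu}u(y)$ and applying the Cauchy--Schwarz inequality in the $a(y)$-weighted inner product (keeping a full power of $|\psi^\mu|$ paired with the lower-order factor) yields the recursion
$$\|\partial^{\widetilde\nu}u(y)\|_{a(y)}\le\sum_{0\neq\mu\le\widetilde\nu}\binom{\widetilde\nu}{\mu}\Big(\int_D|\psi^\mu|^2\,a(y)\,|\nabla\partial^{\widetilde\nu-\mu}u|^2\Big)^{1/2},$$
together with the energy bound $\|u(y)\|_{a(y)}\le\|f\|_{V'}\,a_{\min}(y)^{-1/2}$.

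The hard part will be to pass from this recursion to a pointwise (in $y$) bound of the form $\sum_{\|\widetilde\nu\|_{\ell^\infty}\le r}\frac{\rho^{2\widetilde\nu}}{\widetilde\nu!}\|\partial^{\widetilde\nu}u(y)\|_{a(y)}^2\le\Phi\,\|u(y)\|_{a(y)}^2$ with $\Phi=\Phi(K\sqrt r)<\infty$. The delicate point is that one must keep the factors $\psi^\mu$ \emph{under the spatial integral}, so as to use the pointwise inequality $\sum_j(\rho_j|\psi_j(x)|)^2\le\big(\sum_j\rho_j|\psi_j(x)|\big)^2\le K^2$, rather than the crude estimate through $\|\psi^\mu\|_{L^\infty(D)}$, whose associated sum over the coordinate index may diverge. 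Combining this $\ell^2$-type summability with the renormalization $\rho^{\widetilde\nu}/\sqrt{\widetilde\nu!}$ and the constraint $\widetilde\nu_j\le r$ (which contributes per differentiation at most a factor $\sqrt r$, since $\sqrt{\widetilde\nu_j!/(\widetilde\nu_j-\mu_j)!}\le r^{\mu_j/2}$, and, after the multinomial bound $\prod_j(\rho_j|\psi_j(x)|)^{2\mu_j}\le\frac{\mu!}{|\mu|!}K^{2|\mu|}$, an effective weight $(K\sqrt r)^{|\mu|}/|\mu|!$), one runs the recursion by induction on $|\widetilde\nu|$, applying Cauchy--Schwarz over $\mu$ with suitable weights. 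The relevant generating series then closes up precisely when $e^{K\sqrt r}<2$, that is $K\sqrt r<\ln 2$, i.e. $K<C_r=\ln 2/\sqrt r$, with $\Phi(K\sqrt r)$ finite in that range and blowing up as $K\uparrow C_r$; getting the sharp constant while not losing summability in the coordinate index is the main technical obstacle.

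Finally, since $\|\partial^{\widetilde\nu}u(y)\|_V^2\le a_{\min}(y)^{-1}\|\partial^{\widetilde\nu}u(y)\|_{a(y)}^2$, the previous step gives $N_r\le\Phi\,\|f\|_{V'}^2\int_U a_{\min}(y)^{-2}\,d\gamma$, and it remains to check that $\int_U a_{\min}(y)^{-2}\,d\gamma<\infty$. Here $a_{\min}(y)^{-1}=\exp\big(\max_{x\in D}(-b(y)(x))\big)$, and the hypothesis $\sum_j\exp(-\rho_j^2)<\infty$ is exactly what guarantees that $b(y)\in L^\infty(D)$ almost surely and that $\exp(p\,\|b(y)\|_{L^\infty(D)})$ is $\gamma$-integrable for every $p>0$ (a Fernique-type estimate), which yields $\int_U a_{\min}(y)^{-2}\,d\gamma<\infty$. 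These same bounds justify a posteriori that $\partial^{\widetilde\nu}u(\cdot)\in L^2(U,V,d\gamma)$, so the identity of the first step is legitimate. Tracking the constants shows the bound depends only on $\|f\|_{V'}$, $\sum_j\exp(-\rho_j^2)$, and $C_r-K$.
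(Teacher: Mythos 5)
Your overall strategy coincides with the one the paper (and the cited reference \cite{BCDM}) uses: reduce the weighted $\ell^2$ bound on the Hermite coefficients to the finiteness of the weighted Sobolev-type quantity $\sum_{\|\widetilde\nu\|_{\ell^\infty}\le r}\frac{\rho^{2\widetilde\nu}}{\widetilde\nu!}\int_U\|\partial^{\widetilde\nu}u\|_V^2\,d\gamma$, and then control that quantity via the PDE. Your first step is correct and, if anything, cleaner than the paper's sketch: instead of the Rodrigues formula you use $H_k'=\sqrt k\,H_{k-1}$ and Gaussian integration by parts to identify $\langle\partial^{\widetilde\nu}u,H_\eta\rangle=\sqrt{(\eta+\widetilde\nu)!/\eta!}\;h_{\eta+\widetilde\nu}$; note that for the direction you actually need (Sobolev norm finite $\Rightarrow$ weighted coefficient sum finite) Bessel's inequality suffices, so you do not even need the claimed exact identity or the completeness of the Hermite system. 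The closing step (integrability of $a_{\min}(y)^{-p}$ from $\sum_j e^{-\rho_j^2}<\infty$ together with $\sup_{x}\sum_j\rho_j|\psi_j(x)|=K$, via $\|b(y)\|_{L^\infty(D)}\le K\sup_j|y_j|/\rho_j$ and a Fernique-type bound) is also the right mechanism.

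The gap is the middle step, and you have named it yourself. The passage from the Leibniz recursion for $\|\partial^{\widetilde\nu}u(y)\|_{a(y)}$ to the pointwise bound $\sum_{\|\widetilde\nu\|_{\ell^\infty}\le r}\frac{\rho^{2\widetilde\nu}}{\widetilde\nu!}\|\partial^{\widetilde\nu}u(y)\|_{a(y)}^2\le\Phi(K\sqrt r)\,\|u(y)\|_{a(y)}^2$ is the entire content of the theorem: it is where the hypothesis $K<\ln 2/\sqrt r$ enters and where the restriction $\|\widetilde\nu\|_{\ell^\infty}\le r$ in the definition of $\xi_\nu$ is used. You correctly identify every ingredient --- keeping $\psi^\mu$ under the spatial integral so that only $\sum_j\rho_j|\psi_j(x)|\le K$ is invoked rather than a sum of $L^\infty$ norms, the factor $r^{\mu_j/2}$ coming from $\widetilde\nu_j\le r$, the multinomial bound, and the threshold $e^{K\sqrt r}<2$ --- but ``one runs the recursion by induction, applying Cauchy--Schwarz over $\mu$ with suitable weights'' is a statement of intent, not a proof. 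The actual induction on $n=|\widetilde\nu|$, in which the level-$n$ partial sums $\epsilon_n$ must be shown to obey a closed recursion whose generating series $\sum_{k\ge1}(e^{K\sqrt r}-1)^{\,\cdot}$-type tail is summable precisely when $e^{K\sqrt r}<2$, depends on a careful splitting of the Cauchy--Schwarz weights between the $\binom{\widetilde\nu}{\mu}$ factors and the $\rho^{2\mu}|\psi^\mu|^2$ factors; this bookkeeping is exactly what \cite{BCDM} proves and what could still fail if done carelessly. Since the survey itself defers this step to \cite{BCDM}, your sketch is on par with the paper's own exposition, but as a standalone proof it is incomplete at the decisive point.
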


Similar to the weighted $\ell^2$ estimate~\eqref{eq:wl2l}  for the Legendre coefficients, the proof of~\eqref{eq:wl2h}
follows by first establishing finiteness of a weighed Sobolev-type norm
$$
\sum_{\|\nu\|_{\ell^\infty} \leq r}\frac {\rho^{2\nu}}{\nu !} \int_U \|\partial^\nu u(y)\|_V^2 \, d\gamma<\infty,
$$
under the assumption~\eqref{eq:psicr} in the above theorem. Then one uses the Rodrigues formula
$H_k(t) =  \frac{(-1)^k}{\sqrt{k!} } \frac{g^{(k)}(t)}{g(t)}$, with $g$ given by~\eqref{eq:gaussmeasure}, in each variable $y_j$
to bound the weighted $\ell^2$ sum in~\eqref{eq:wl2h} by this norm.

In summary, the various estimates expressed in the above theorems all take the form
$$
\sum_{\nu\in{\cal F}} (\omega_\nu c_\nu)^2 <\infty,
$$
where 
$$
c_\nu\in \{ \|t_\nu\|_V, \|w_\nu\|_V,\|\widetilde w_\nu\|_V,\|h_\nu\|_V\},
$$
or equivalently
$$
\omega_\nu\in \{\rho^\nu,\rho^\nu \beta(\nu)^{-1},\rho^\nu \beta(\nu)^{-2}, \xi_\nu^{1/2}\}.
$$

Then, one natural strategy for establishing $\ell^p$ summabilty of the sequence $(c_\nu)_{\nu\in{\cal F}}$ is to
invoke H\"older's inequality, which gives, for all $0<p<2$,
$$
\left(\sum_{\nu\in{\cal F}} |c_\nu|^p\right)^{1/p}\leq \left(\sum_{\nu\in{\cal F}} (\omega_\nu c_\nu)^2\right)^{1/2} \left(\sum_{\nu\in{\cal F}} |\kappa_\nu|^q\right)^{1/q} <\infty, \quad \frac 1 q:=\frac 1 p-\frac 1 2,
$$
where the sequence $(\kappa_\nu)_{\nu\in {\cal F}}$ is defined by
$$
\kappa_\nu:=\omega_\nu^{-1}.
$$
Therefore $\ell^p$ summability of $(c_\nu)_{\nu\in{\cal F}}$ follows from $\ell^q$ summability of 
$(\kappa_\nu)_{\nu\in{\cal F}}$ with $0<q<\infty$ such that $\frac 1 q=\frac 1 p-\frac 1 2$.
This $\ell^q$ summability can be related to that of the univariate sequence 
$$
b=(b_j)_{j\geq 1},\quad b_j:=\rho_j^{-1}.
$$
Indeed, from the factorization
$$
\sum_{\nu\in{\cal F}} b^{q\nu}=\prod_{j\geq 1} \sum_{n\geq 0} b_j^{nq},
$$
one readily obtains the following elementary result, see \cite{CD} for more details.

\begin{lemma} 
\label{thm:lemmaq}
For any $0<q<\infty$, one has
$$
b\in \ell^q(\mathbb{N})\quad {\rm and}\quad \|b\|_{\ell^\infty}<1 \iff (b^\nu)_{\nu\in{\cal F}} \in \ell^q({\cal F}).
$$
\end{lemma}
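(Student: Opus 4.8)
The plan is to reduce the statement to the factorization identity displayed just above the lemma, namely $\sum_{\nu\in{\cal F}} b^{q\nu}=\prod_{j\geq 1} \sum_{n\geq 0} b_j^{nq}$, understood as an equality in $[0,+\infty]$ for the nonnegative sequence $b=(b_j)_{j\geq 1}$. I would first recall why this identity holds without any a priori convergence assumption: truncating to the first $J$ coordinates gives the elementary finite distributivity $\sum_{\nu\in\mathbb{N}_0^J}\prod_{j=1}^J b_j^{q\nu_j}=\prod_{j=1}^J\sum_{n\geq 0}b_j^{qn}$, and letting $J\to\infty$ both sides are nondecreasing in $J$, so their limits agree by monotone convergence. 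Once this is in hand, the condition $(b^\nu)_{\nu\in{\cal F}}\in\ell^q({\cal F})$ is exactly the finiteness of $\prod_{j\geq 1}\sum_{n\geq 0}b_j^{qn}$, and the whole proof reduces to two elementary facts about this product.

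For the implication $\Rightarrow$, I would assume $b\in\ell^q(\mathbb{N})$ and $\theta:=\|b\|_{\ell^\infty}<1$. Then each $b_j\leq\theta<1$, so every factor equals $(1-b_j^q)^{-1}$, and from $-\ln(1-x)=\sum_{k\geq 1}x^k/k\leq x/(1-x)$ one gets $-\ln(1-b_j^q)\leq (1-\theta^q)^{-1}b_j^q$. Summing over $j$ yields $\sum_{j\geq 1}\bigl(-\ln(1-b_j^q)\bigr)\leq (1-\theta^q)^{-1}\|b\|_{\ell^q}^q<\infty$, so the infinite product converges, i.e. $(b^\nu)_{\nu\in{\cal F}}\in\ell^q({\cal F})$.

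For the implication $\Leftarrow$, I would assume $\prod_{j\geq 1}\sum_{n\geq 0}b_j^{qn}<\infty$. Since this is a product of factors in $[1,+\infty]$, each factor must be finite, which forces $b_j<1$ for every $j$; the factors, now equal to $(1-b_j^q)^{-1}$, must moreover tend to $1$, so $b_j^q\to 0$, hence $b_j\to 0$. A sequence of numbers all strictly less than $1$ that tends to $0$ attains its supremum at some index, so $\|b\|_{\ell^\infty}<1$. Finally, from $-\ln(1-x)\geq x$ on $[0,1)$ together with $\sum_{j\geq 1}\bigl(-\ln(1-b_j^q)\bigr)<\infty$ we obtain $\sum_{j\geq 1}b_j^q<\infty$, i.e. $b\in\ell^q(\mathbb{N})$.

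I do not anticipate a genuine obstacle here: the content is entirely carried by the factorization identity and the two-sided estimate $x\leq -\ln(1-x)\leq x/(1-x)$. The only points deserving a moment of care are that the factorization must be invoked as an identity in $[0,+\infty]$ (valid with no convergence hypothesis), and that strictness of $\|b\|_{\ell^\infty}<1$ is both genuinely needed and genuinely recovered — a single $b_j=1$ already makes the $j$-th geometric series, and hence $\sum_{\nu\in{\cal F}}b^{q\nu}$, diverge.
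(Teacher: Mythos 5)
Your proof is correct and follows exactly the route the paper intends: the paper itself only displays the factorization $\sum_{\nu\in{\cal F}} b^{q\nu}=\prod_{j\geq 1}\sum_{n\geq 0}b_j^{nq}$ and refers to \cite{CD}, and your argument supplies the missing details (the monotone-convergence justification of the identity in $[0,+\infty]$ and the two-sided logarithmic estimate) without any gaps. The observation that finiteness of the product forces $b_j\to 0$ and hence that the supremum is attained, giving strictness of $\|b\|_{\ell^\infty}<1$, is the one point that genuinely needs care, and you handle it correctly.
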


In the case $\omega_\nu=\rho^\nu$, \emph{i.e.}~$\kappa_\nu=b^\nu$, this shows that the 
$\ell^p$ summability of the Taylor coefficients $(\|t_\nu\|_V)_{\nu\in{\cal F}}$
follows if the assumption~\eqref{eq:uearho} holds with 
$b=(\rho_j^{-1})_{j\geq 1}\in \ell^q(\mathbb{N})$ and $\rho_j>1$ for all $j$.
By a similar factorization, it is also easily checked that for any algebraic factor of the form $\alpha(\nu):=\prod_{j\geq 1}(1+c_1\nu_j)^{c_2}$ with $c_1,c_2\geq 0$, one has
$$
b\in \ell^q(\mathbb{N})\quad {\rm and}\quad \|b\|_{\ell^\infty}<1 \iff (\alpha(\nu)b^\nu)_{\nu\in{\cal F}} \in \ell^q({\cal F}).
$$
This allows us to reach a similar conclusion in the cases $\omega_\nu=\beta(\nu)^{-1}\rho^\nu$ or $\omega_\nu=\beta(\nu)^{-2}\rho^\nu$, which correspond to the Legendre coefficients $(\|w_\nu\|_V)_{\nu\in{\cal F}}$ and $(\|\widetilde w_\nu\|_V)_{\nu\in{\cal F}}$, in view of~\eqref{eq:wl2l}.

Likewise, in the case where $\omega_\nu=\xi_\nu^{1/2}$, using the factorization 
$$
\sum_{\nu\in {\cal F}} \kappa_\nu^q 
    = \prod_{j\geq 1} \sum_{n \geq 0} \left( \sum_{l=0}^r {n \choose l}\rho_j^{2l}   \right)^{-q/2},
$$
it is shown in \cite{BCDM} that the sum on the left converges if $b\in \ell^q$, provided
that $r$ was chosen large enough such that $q>\frac 2 r$.
This shows that the $\ell^p$ summability of the Hermite coefficients $(\|h_\nu\|_V)_{\nu\in{\cal F}}$
follows if the assumption~\eqref{eq:psicr} holds with 
$b=(\rho_j^{-1})_{j\geq 1}\in \ell^q(\mathbb{N})$. Note that, since the sequence $b$ can be
renormalized, we may replace~\eqref{eq:psicr} by the condition
\begin{equation}
\sup_{x\in D}\sum_{j\geq 1} \rho_j|\psi_j(x)|<\infty,
\label{eq:psicr1}
\end{equation}
without a specific bound.

\subsection{Approximation by Downward Closed Polynomials}
\label{sec:2.3}

The above results, combined with Lemma~\ref{thm:stechkin}, allow us
to build polynomial approximations $u_{\Lambda_n}$ with 
provable convergence rates $n^{-s}$ in $L^\infty$ or $L^2$
by $n$-term trunctation of the various polynomial expansions.
However, we would like to obtain such convergence rates with sets $\Lambda_n$ that are in addition downward closed.

Notice that if a sequence $(\kappa_\nu)_{\nu\in{\cal F}}$ of nonnegative numbers is
monotone nonincreasing, that is
$$
\nu\leq \widetilde \nu \implies  \kappa_{\widetilde \nu} \leq \kappa_{\nu},
$$
then the set $\Lambda_n$ corresponding to the $n$ largest values of $\kappa_\nu$ (up to a specific selection
in case of equal values) is downward closed. More generally, there exists a sequence $(\Lambda_n)_{n\geq 1}$ of 
downward closed realizations of such sets which is nested, \emph{i.e.}~$\Lambda_1 \subset \Lambda_2 \ldots $,
with $\Lambda_1=0_{{\cal F}}:=(0,0,\ldots)$.

For any sequence $(\kappa_\nu)_{\nu\in{\cal F}}$ tending to $0$, in the sense 
that $\#\{\nu \; : \; |\kappa_\nu|>\delta\} <\infty$ for all $\delta >0$,
we introduce its {\it monotone majorant} $(\widehat \kappa_{\nu})_{\nu\in{\cal F}}$
defined by
$$
\widehat \kappa_\nu:=\max_{\widetilde \nu\geq \nu} |\kappa_{\widetilde \nu}|,
$$
that is the smallest monotone nonincreasing sequence that dominates $(\kappa_\nu)_{\nu\in{\cal F}}$. In order
to study best $n$-term approximations using downward closed sets, we adapt the $\ell^q$ spaces
as follows.

\begin{definition}
\label{thm:deflpm}
For $0<q<\infty$, we say that $(\kappa_\nu)_{\nu\in{\cal F}}\in \ell^\infty({\cal F})$ 
belongs to $\ell^q_m({\cal F})$ if and only if its monotone majorant $(\widehat \kappa_\nu)_{\nu\in{\cal F}}$ 
belongs to $\ell^q({\cal F})$. 
\end{definition}

We are now in position to state a general theorem that gives a condition
for approximation using downward closed sets in terms of weighted $\ell^2$ summability.  

\begin{theorem}
\label{thm:theowlower}
Let $(c_\nu)_{\nu\in{\cal F}}$ and $(\omega_\nu)_{\nu\in{\cal F}}$ be positive sequences such that
$$
\sum_{\nu\in{\cal F}}(\omega_\nu c_\nu)^2<\infty,
$$
and such that $(\kappa_\nu)_{\nu\in{\cal F}}\in \ell^q_m({\cal F})$ for some $0<q<\infty$ with $\kappa_\nu=\omega_\nu^{-1}$. Then,
for any $0<r\leq 2$ such that $\frac 1 q>\frac 1 r-\frac 1 2$,
there exists a nested sequence $(\Lambda_n)_{n\geq 1}$ of downward closed sets such that $\#(\Lambda_n)=n$ and
\begin{equation}
\left(\sum_{\nu \notin \Lambda_n} c_\nu^r\right)^{1/r} \leq Cn^{-s}, \quad s:=\frac 1 q+\frac 1 2-\frac 1 r>0.
\label{eq:rdownwardrate}
\end{equation}
\end{theorem}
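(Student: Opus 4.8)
The plan is to combine the Hölder-type argument already used in Section~\ref{sec:2.2} with a variant of Stechkin's lemma (Lemma~\ref{thm:stechkin}) adapted to downward closed sets. The key observation is that, since $\sum_{\nu}(\omega_\nu c_\nu)^2<\infty$, one has pointwise $c_\nu\leq C_0\,\omega_\nu^{-1}=C_0\,\kappa_\nu$ for $C_0:=(\sum_\nu(\omega_\nu c_\nu)^2)^{1/2}$, and more precisely $c_\nu\leq C_0\,\widehat\kappa_\nu$ where $\widehat\kappa_\nu$ is the monotone majorant. Thus controlling the tail of $(c_\nu^r)$ over the complement of a downward closed set reduces to controlling the tail of the monotone majorant, which belongs to $\ell^q$ by hypothesis. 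The gain of the extra $1/2$ in the rate exponent comes precisely from feeding the weighted $\ell^2$ bound into a Hölder split before invoking Stechkin.

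First I would fix the enumeration: since $(\widehat\kappa_\nu)_{\nu\in{\cal F}}$ is monotone nonincreasing and tends to $0$, one can order the indices so that $\widehat\kappa_{\nu^{(1)}}\geq\widehat\kappa_{\nu^{(2)}}\geq\cdots$, and by the remark preceding Definition~\ref{thm:deflpm} one can choose the initial segments $\Lambda_n:=\{\nu^{(1)},\dots,\nu^{(n)}\}$ to be downward closed and nested with $\Lambda_1=\{0_{\cal F}\}$. (The fact that a monotone majorant yields downward closed initial segments is exactly the observation made right after Definition~\ref{thm:deflpm}.) Next, on the complement $\nu\notin\Lambda_n$ I would write, for the exponent $r$ with $\tfrac1q>\tfrac1r-\tfrac12$, the Hölder split
$$
\sum_{\nu\notin\Lambda_n}c_\nu^r
=\sum_{\nu\notin\Lambda_n}(\omega_\nu c_\nu)^r\,\kappa_\nu^r
\leq\Big(\sum_{\nu\notin\Lambda_n}(\omega_\nu c_\nu)^2\Big)^{r/2}\Big(\sum_{\nu\notin\Lambda_n}\kappa_\nu^{q}\Big)^{r/q'},
$$
where $q'$ is the conjugate-type exponent determined by $\tfrac r2+\tfrac{r}{q'}=1$; the condition $\tfrac1q>\tfrac1r-\tfrac12$ is exactly what makes $q\leq q'$, so that $\sum_{\nu\notin\Lambda_n}\kappa_\nu^{q'}\leq\sum_{\nu\notin\Lambda_n}\kappa_\nu^{q}\leq\sum_{\nu\notin\Lambda_n}\widehat\kappa_\nu^{q}$ and the last factor is finite. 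The first factor is bounded by $C_0^{r}$. For the second factor I would apply Lemma~\ref{thm:stechkin} to the sequence $(\widehat\kappa_\nu)_{\nu\in{\cal F}}\in\ell^q({\cal F})$: choosing any $p<q$ it gives $\big(\sum_{\nu\notin\Lambda_n}\widehat\kappa_\nu^{q}\big)^{1/q}\leq \|\widehat\kappa\|_{\ell^p}(n+1)^{-(1/p-1/q)}$ — but here it is cleaner to use instead that the $n$ largest entries of a nonincreasing $\ell^q$ sequence leave a tail of size $O(n^{-1/q+1/q})$... so rather I would invoke the elementary fact that for a nonincreasing sequence in $\ell^q$, $\sum_{k>n}\widehat\kappa_{\nu^{(k)}}^q\leq \|\widehat\kappa\|_{\ell^q}^q$ trivially, which is not decaying; the decay must instead come from the full Stechkin statement with a strictly smaller $p$, i.e.\ from the hypothesis $(\kappa_\nu)\in\ell^q_m$ being used with room to spare via the strict inequality $\tfrac1q>\tfrac1r-\tfrac12$.

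Assembling, one gets $\big(\sum_{\nu\notin\Lambda_n}c_\nu^r\big)^{1/r}\leq C\,n^{-\sigma}$ where $\sigma$ is the rate produced by Stechkin for the $\kappa$-part, scaled by the Hölder exponent $r/q'$; a short bookkeeping with $s=\tfrac1q+\tfrac12-\tfrac1r$ should reveal that the exponents match. I expect the main obstacle to be precisely this bookkeeping step — reconciling the three exponents $q$, $q'$, $r$ and the Stechkin gain so that the final rate is exactly $s=\tfrac1q+\tfrac12-\tfrac1r$ rather than something weaker, and in particular choosing the auxiliary summability exponent inside Stechkin correctly using the strict inequality hypothesis. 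A secondary technical point worth care is the verification that the nested downward closed realizations $(\Lambda_n)$ can be taken with $\#(\Lambda_n)=n$ exactly (handling ties in the values of $\widehat\kappa_\nu$), which is where the monotone-majorant construction and the observation after Definition~\ref{thm:deflpm} are essential.
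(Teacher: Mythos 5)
Your overall strategy --- order by the monotone majorant $\widehat\kappa_\nu$ to obtain nested downward closed sets $\Lambda_n$, split the tail by H\"older into a weighted $\ell^2$ factor and a $\kappa$-factor, and extract the decay from the $\ell^q$ summability of $(\widehat\kappa_\nu)$ --- is exactly the paper's. But the decisive step, turning the second H\"older factor into something that decays like $n^{-s}$, is left unresolved in your write-up (you flag it yourself as "the main obstacle"), and both of the routes you sketch for it fail. You cannot apply Lemma~\ref{thm:stechkin} "choosing any $p<q$'': that would require $(\widehat\kappa_\nu)\in\ell^p$ for some $p<q$, which is not part of the hypothesis. And, as you note, bounding $\sum_{\nu\notin\Lambda_n}\widehat\kappa_\nu^{q}$ by $\|\widehat\kappa\|_{\ell^q}^q$ gives no decay at all. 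The point you are missing is that the H\"older split naturally produces the tail in the \emph{larger} exponent $p$ defined by $\frac1p=\frac1r-\frac12$ (your $q'$), and the strict inequality $\frac1q>\frac1r-\frac12$ says precisely that $q<p$; so Lemma~\ref{thm:stechkin} applies to the sequence $(\widehat\kappa_\nu)$ with summability exponent $q$ and tail exponent $p$ --- no auxiliary exponent below $q$ is needed. Since $\Lambda_n$ consists of the $n$ largest $\widehat\kappa_\nu$, this gives
$$
\Bigl(\sum_{\nu\notin\Lambda_n}\widehat\kappa_\nu^{p}\Bigr)^{1/p}\leq B\,(n+1)^{-(1/q-1/p)},\qquad B:=\|(\widehat\kappa_\nu)_{\nu\in{\cal F}}\|_{\ell^q},
$$
and $\frac1q-\frac1p=\frac1q+\frac12-\frac1r=s$. (Equivalently, argue directly as the paper does: for $\nu\notin\Lambda_n$ one has $\widehat\kappa_\nu^{p}\le \widehat\kappa_{n+1}^{\,p-q}\widehat\kappa_\nu^{q}$ and $\widehat\kappa_{n+1}\le B(n+1)^{-1/q}$ by the nonincreasing rearrangement.) Combining with the first H\"older factor, which is bounded by $A^{r}$ with $A^2:=\sum_{\nu}(\widehat\kappa_\nu^{-1}c_\nu)^2\le\sum_\nu(\omega_\nu c_\nu)^2$ because $\widehat\kappa_\nu^{-1}\le\kappa_\nu^{-1}=\omega_\nu$, yields \eqref{eq:rdownwardrate} with $C=AB$. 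A final small repair: the second factor in your H\"older split should carry $\widehat\kappa_\nu$ rather than $\kappa_\nu$ (or you should pass from $\kappa_\nu$ to $\widehat\kappa_\nu$ before invoking Stechkin), since the decay estimate is tied to $\Lambda_n$ being the $n$ largest values of the \emph{majorant}; the replacement is harmless because $\kappa_\nu\le\widehat\kappa_\nu$.
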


\begin{proof}
With $(\widehat \kappa_\nu)_{\nu\in{\cal F}}$ being the monotone majorant of  $(\kappa_\nu)_{\nu\in{\cal F}}$, we observe that
$$
A^2:=\sum_{\nu\in{\cal F}}(\widehat \kappa_\nu^{-1} c_\nu)^2\leq \sum_{\nu\in{\cal F}}(\kappa_\nu^{-1} c_\nu)^2=\sum_{\nu\in{\cal F}}(\omega_\nu c_\nu)^2 <\infty.
$$
We pick a nested sequence $(\Lambda_n)_{n\geq 1}$ of downward closed sets, such that $\Lambda_n$ consists of the indices corresponding to the $n$ largest $\widehat \kappa_\nu$.
Denoting by $(\widehat \kappa_n)_{n\geq 1}$ the decreasing rearrangement of $(\widehat \kappa_\nu)_{\nu\in{\cal F}}$, we observe that
$$
n \widehat \kappa_n^q \leq \sum_{j= 1}^n \widehat \kappa_j^q \leq  B^q, \quad B:=\|(\widehat \kappa_\nu)_{\nu\in{\cal F}}\|_{\ell^q}<\infty.
$$
With $p$ such that $\frac 1 p=\frac 1 r-\frac 1 2$, we find that
\begin{align*}
\left(\sum_{\nu \notin \Lambda_n} c_\nu^r\right)^{1/r}
& \leq \left( \sum_{\nu \notin \Lambda_n} (\widehat \kappa_\nu^{-1} c_\nu)^2\right)^{1/2}\left(\sum_{\nu \notin \Lambda_n} \widehat \kappa_\nu^p\right)^{1/p}\\
&\leq A \left(\widehat \kappa_{n+1}^{p-q}\sum_{\nu \notin \Lambda_n} \widehat \kappa_\nu^q\right)^{1/p}\\
&\leq A B (n+1)^{1/p-1/q},
\end{align*}
where we have used H\"older's inequality and the properties of $(\widehat \kappa_n)_{n\geq 1}$. 
This gives~\eqref{eq:rdownwardrate} with $C:=AB$. 
\end{proof}

We now would like to apply the above result with $c_\nu\in \{ \|t_\nu\|_V, \|w_\nu\|_V,\|\widetilde w_\nu\|_V,\|h_\nu\|_V\}$,
and the corresponding weight sequences $\omega_\nu\in \{\rho^\nu,\rho^\nu \beta(\nu)^{-1},\rho^\nu \beta(\nu)^{-2}, \xi_\nu^{1/2}\}$,
or equivalently $\kappa_\nu\in \{b^\nu,b^\nu \beta(\nu),b^\nu \beta(\nu)^2, \xi_\nu^{-1/2}\}$.
In the case of the Taylor series, where $\kappa_\nu=b^\nu$, we readily see that if $b_j<1$ for all $j\geq 1$, then the sequence
$(\kappa_\nu)_{\nu\in{\cal F}}$ is monotone nonincreasing, and therefore Lemma~\ref{thm:lemmaq}
shows that $b\in \ell^q$ implies $(\kappa_\nu)_{\nu\in{\cal F}}\in\ell^q_m({\cal F})$. By application of Theorem~\ref{thm:theowlower} with the value $r=1$, this leads to the following result.

\begin{theorem}
\label{thm:theotaylor}
If~\eqref{eq:uearho} holds with $(\rho_j^{-1})_{j\geq 1}\in \ell^q(\mathbb{N})$ for some $0<q<\infty$ and $\rho_j>1$ for all $j$, 
then 
$$
\|u-u_{\Lambda_n}\|_{L^\infty(U,V)}\leq Cn^{-s}, \quad s:=\frac 1 q-\frac 1 2,
$$
where $u_{\Lambda_n}$ is the truncated Taylor series and $\Lambda_n$ is any downward closed set corresponding to the $n$ largest $\kappa_\nu$. 
\end{theorem}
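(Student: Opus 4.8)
The plan is to obtain the bound by chaining three ingredients already in hand: the weighted $\ell^2$ estimate of Theorem~\ref{thm:theosumaff}, the summability criterion of Lemma~\ref{thm:lemmaq}, and the downward‑closed $n$‑term estimate of Theorem~\ref{thm:theowlower}, read off in the $L^\infty$ setting through~\eqref{eq:estimLinf}. The hypothesis~\eqref{eq:uearho} of the present theorem is exactly the one required in Theorem~\ref{thm:theosumaff}, so it delivers $\sum_{\nu\in{\cal F}}(\rho^\nu\|t_\nu\|_V)^2<\infty$. I therefore set $c_\nu:=\|t_\nu\|_V$, $\omega_\nu:=\rho^\nu$ and $\kappa_\nu:=\omega_\nu^{-1}=b^\nu$ with $b_j:=\rho_j^{-1}$.

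The step that is special to the Taylor case is that the monotone‑majorant construction of Definition~\ref{thm:deflpm} is vacuous here: since $\rho_j>1$ gives $b_j<1$ for every $j$, the sequence $(b^\nu)_{\nu\in{\cal F}}$ is already monotone nonincreasing, hence equals its own monotone majorant, so $\ell^q_m({\cal F})=\ell^q({\cal F})$ for it. Because $b\in\ell^q(\mathbb{N})$ and $\|b\|_{\ell^\infty}=\max_j b_j<1$, Lemma~\ref{thm:lemmaq} then yields $(\kappa_\nu)_{\nu\in{\cal F}}\in\ell^q({\cal F})=\ell^q_m({\cal F})$.

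Next I would apply Theorem~\ref{thm:theowlower} with $r=1$. This is admissible precisely when $\frac1q>\frac11-\frac12$, i.e.\ $q<2$, which is exactly the range in which the announced exponent $s=\frac1q-\frac12$ is positive; in that range Theorem~\ref{thm:theowlower} produces a nested family of downward closed sets $(\Lambda_n)_{n\geq1}$ with $\#(\Lambda_n)=n$ and $\sum_{\nu\notin\Lambda_n}\|t_\nu\|_V\leq Cn^{-s}$, $s=\frac1q+\frac12-1=\frac1q-\frac12$. Since in that proof $\Lambda_n$ collects the indices of the $n$ largest $\widehat\kappa_\nu=\kappa_\nu$, these are the sets named in the statement, and the same estimate holds for any downward closed realization of ``the $n$ largest $\kappa_\nu$'' because the inequality $\widehat\kappa_\nu\leq\widehat\kappa_{n+1}$ for $\nu\notin\Lambda_n$ is all that was used. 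Finally, on $U=[-1,1]^d$ the monomials satisfy $\|y^\nu\|_{L^\infty(U)}=1$ and $(\|t_\nu\|_V)_{\nu\in{\cal F}}\in\ell^1({\cal F})$ (by Cauchy--Schwarz from the weighted $\ell^2$ bound, using $\kappa\in\ell^2$), so~\eqref{eq:estimLinf} applies and gives $\|u-u_{\Lambda_n}\|_{L^\infty(U,V)}\leq\sum_{\nu\notin\Lambda_n}\|t_\nu\|_V\leq Cn^{-s}$.

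I expect no serious obstacle: everything above is substitution into results already proved. The one point that is not purely mechanical is implicit in using~\eqref{eq:estimLinf}, namely that the Taylor series~\eqref{eq:taylor} converges (conditionally, hence unconditionally by $\ell^1$‑summability) to $u$ itself in $L^\infty(U,V)$, so that $u_{\Lambda_n}$ genuinely approximates $u$. This relies on the holomorphic dependence of the parametric solution of~\eqref{eq:ellip} on $y$ under~\eqref{eq:uearho}, which is standard for the affine model but not among the lemmas stated above; once granted, the proof is complete.
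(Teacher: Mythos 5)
Your proposal is correct and follows essentially the same route as the paper: Theorem~\ref{thm:theosumaff} gives the weighted $\ell^2$ bound with $\omega_\nu=\rho^\nu$, the observation that $b_j<1$ makes $(b^\nu)_{\nu\in{\cal F}}$ its own monotone majorant so that Lemma~\ref{thm:lemmaq} yields $(\kappa_\nu)\in\ell^q_m({\cal F})$, and Theorem~\ref{thm:theowlower} with $r=1$ combined with~\eqref{eq:estimLinf} gives the rate $s=\frac1q-\frac12$. Your explicit remarks on the implicit restriction $q<2$ and on the unconditional $L^\infty$ convergence of the Taylor series are accurate refinements of points the paper leaves tacit.
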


In the case of the Legendre series, the weight $\kappa_\nu=b^\nu \beta(\nu)$ is not monotone nonincreasing due to the presence of the algebraic factor $\beta(\nu)$. However, the following result holds.

\begin{lemma} 
\label{thm:lemmaqm}
For any $0<q<\infty$ and for any algebraic factor of the form $\alpha(\nu):=\prod_{j\geq 1}(1+c_1\nu_j)^{c_2}$ with $c_1,c_2\geq 0$,
one has 
$$
b\in \ell^q(\mathbb{N})\quad {\rm and}\quad \|b\|_{\ell^\infty}<1 \iff (\alpha(\nu) b^\nu)_{\nu\in{\cal F}} \in \ell^q_m({\cal F}).
$$
\end{lemma}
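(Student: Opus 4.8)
The plan is to prove both implications, with the forward direction ($b\in\ell^q$ and $\|b\|_{\ell^\infty}<1$ implies $(\alpha(\nu)b^\nu)_{\nu\in{\cal F}}\in\ell^q_m({\cal F})$) being the substantive part. The reverse implication is easy: if $(\alpha(\nu)b^\nu)_{\nu\in{\cal F}}\in\ell^q_m({\cal F})$, then in particular this sequence is in $\ell^\infty({\cal F})$, hence bounded, and since $\alpha(\nu)\geq 1$, the plain sequence $(b^\nu)_{\nu\in{\cal F}}$ is also in $\ell^\infty({\cal F})$ and moreover in $\ell^q({\cal F})$ because it is dominated by the monotone majorant of $(\alpha(\nu)b^\nu)_{\nu\in{\cal F}}$. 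One then invokes Lemma~\ref{thm:lemmaq} to conclude $b\in\ell^q(\mathbb{N})$ and $\|b\|_{\ell^\infty}<1$.

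For the forward direction, the core obstacle is that $(\alpha(\nu)b^\nu)_{\nu\in{\cal F}}$ is genuinely not monotone, so I need to control its monotone majorant $\widehat{\kappa}_\nu=\max_{\widetilde\nu\geq\nu}\alpha(\widetilde\nu)b^{\widetilde\nu}$. The strategy is to absorb the algebraic factor $\alpha$ into a slightly larger geometric weight. Without loss of generality assume $b$ is nonincreasing. The key observation is that for any $\theta>1$ one has $\alpha(\nu)\leq C_\theta\,\theta^{|\nu|}$ for a constant $C_\theta$ depending only on $\theta,c_1,c_2$ (since $(1+c_1 k)^{c_2}\leq C'_\theta\theta^k$ for each factor, and the product telescopes); therefore $\alpha(\nu)b^\nu\leq C_\theta\,(\theta b)^\nu=C_\theta\,\widetilde b^{\,\nu}$ where $\widetilde b_j:=\theta b_j$. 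Choosing $\theta$ close enough to $1$ we still have $\|\widetilde b\|_{\ell^\infty}<1$ and $\widetilde b\in\ell^q(\mathbb{N})$. Since $\widetilde b$ is nonincreasing, $(\widetilde b^{\,\nu})_{\nu\in{\cal F}}$ is monotone nonincreasing, so it equals its own monotone majorant. Consequently $\widehat\kappa_\nu\leq C_\theta\widetilde b^{\,\nu}$, and by Lemma~\ref{thm:lemmaq} applied to $\widetilde b$ we get $(\widetilde b^{\,\nu})_{\nu\in{\cal F}}\in\ell^q({\cal F})$, whence $(\widehat\kappa_\nu)_{\nu\in{\cal F}}\in\ell^q({\cal F})$, which is precisely the statement that $(\alpha(\nu)b^\nu)_{\nu\in{\cal F}}\in\ell^q_m({\cal F})$.

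The one point requiring a little care is the uniformity of the bound $\alpha(\nu)\leq C_\theta\theta^{|\nu|}$ over all $\nu\in{\cal F}$, including those with unboundedly many nonzero components: one writes $\alpha(\nu)=\prod_{j:\nu_j>0}(1+c_1\nu_j)^{c_2}$, uses $(1+c_1 k)^{c_2}\leq C'_\theta\theta^k$ valid for all integers $k\geq 1$ with a single constant $C'_\theta$, but then naively $\prod_{j:\nu_j>0}C'_\theta$ is unbounded. The fix is to split: choose $\theta_1>1$ so that $(1+c_1 k)^{c_2}\leq\theta_1^k$ for all $k\geq k_0$ (possible since the left side grows polynomially), and handle $1\leq k<k_0$ by noting $(1+c_1 k)^{c_2}\leq(1+c_1 k_0)^{c_2 k}=:\theta_2^k$; then with $\theta:=\max(\theta_1,\theta_2)$ one gets $\alpha(\nu)\leq\theta^{|\nu|}$ with no extra constant at all. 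So in fact $C_\theta=1$ suffices and the argument closes cleanly. The hard part is really just recognizing that replacing $b$ by $\theta b$ with $\theta$ slightly above $1$ both swallows the algebraic factor and preserves the two hypotheses $b\in\ell^q$ and $\|b\|_{\ell^\infty}<1$; everything else is Lemma~\ref{thm:lemmaq} and monotonicity.
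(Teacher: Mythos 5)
Your reverse implication is fine, and your overall strategy for the forward direction (dominate $\alpha(\nu)b^\nu$ by a monotone sequence that is still in $\ell^q$) is the right one, but the specific way you absorb the algebraic factor has a genuine gap. You correctly notice that the naive bound $(1+c_1k)^{c_2}\leq C'_\theta\theta^k$ produces an uncontrolled factor $(C'_\theta)^{\#\,{\rm supp}(\nu)}$, and you propose to repair this by finding a single $\theta$ with $(1+c_1k)^{c_2}\leq \theta^{k}$ for \emph{all} $k\geq 1$, so that $\alpha(\nu)\leq\theta^{|\nu|}$ with no constant. But such a $\theta$ cannot be taken close to $1$: the requirement at $k=1$ alone forces $\theta\geq(1+c_1)^{c_2}$ (and since $k\mapsto \frac{c_2}{k}\ln(1+c_1k)$ is decreasing, $(1+c_1)^{c_2}$ is exactly the infimum of admissible $\theta$). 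Hence your substitute sequence $\widetilde b=\theta b$ satisfies $\|\widetilde b\|_{\ell^\infty}<1$ only under the extra hypothesis $\|b\|_{\ell^\infty}<(1+c_1)^{-c_2}$, which is strictly stronger than the assumed $\|b\|_{\ell^\infty}<1$. In the regime actually needed in this paper (e.g.\ $\alpha=\beta(\nu)^2$, i.e.\ $c_1=2$, $c_2=1$, with $b_1$ close to $1$) one has $\theta b_1>1$, so $(\widetilde b^{\,\nu})$ is not monotone nonincreasing, is not even bounded, and Lemma~\ref{thm:lemmaq} cannot be applied to it; the whole chain of inequalities collapses.

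The way out, which is what the paper does, is to not treat all coordinates uniformly. Fix a finite $J$ and a $\tau>1$ with $\tau\|b\|_{\ell^\infty}<1$. For the finitely many coordinates $j\leq J$ you may use the lossy bound $(1+c_1\nu_j)^{c_2}\leq C\tau^{\nu_j}$, because the total penalty is only $C^{J}$, a fixed constant. For $j>J$ you keep the algebraic factor $(1+c_1\nu_j)^{c_2}b_j^{\nu_j}$ intact: there the $\ell^q$ summability of each univariate factor follows because the series $\sum_n(1+c_1n)^{qc_2}b_j^{nq}$ converges and is $1+O(b_j^q)$, and monotonicity in the $j$-th direction follows because $b_j\to 0$ while the ratio $\bigl(\frac{1+c_1(\nu_j+1)}{1+c_1\nu_j}\bigr)^{c_2}$ stays bounded, so for $J$ large enough the one-step increments decrease the majorant. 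This split is the missing idea; without it, no single geometric weight can simultaneously dominate the algebraic factor and keep the base below $1$.
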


\begin{proof} The implication from right to left is a consequence of Lemma~\ref{thm:lemmaq}, and so we
concentrate on the implication from left to right. For this it suffices to find a majorant $\widetilde \kappa_\nu$ of 
$\kappa_\nu:=\alpha(\nu) b^\nu$ which is monotone nonincreasing and such that 
$(\widetilde \kappa_\nu)_{\nu\in{\cal F}} \in \ell^q({\cal F})$. We notice that for any $\tau>1$, there exists $C=C(\tau,c_1,c_2)\geq 1$ such that
$$
(1+c_1n)^{c_2}\leq C\tau^n,\quad n\geq 0.
$$
For some $J\geq 1$ and $\tau$ to be fixed further, we may thus write
$$
\kappa_\nu\leq \widetilde \kappa_\nu:= C^J \prod_{j=1}^J(\tau b_j)^{\nu_j} \prod_{j>J} (1+c_1\nu_j)^{c_2}b_j^{\nu_j}.
$$
Since $\|b\|_{\ell^\infty}<1$ we can take $\tau>1$ such that $\theta:=\tau\|b\|_{\ell^\infty}<1$.
By factorization, we find that
$$
\sum_{\nu\in {\cal F}} \widetilde \kappa_\nu^q=C^{Jq} \left(\prod_{j=1}^J \left(\sum_{n\geq 0} \theta^{qn}\right)\right) \left( \prod_{j>J} \left(\sum_{n\geq 0} (1+c_1n)^{qc_2} b_j^{nq}\right)\right).
$$
The first product is bounded by $(1-\theta^q)^{-J}$. Each factor in the second product is a converging series which is bounded by $1+cb_j^q$ for some $c>0$ that depends on $c_1$, $c_2$ and $\|b\|_{\ell^\infty}$.
It follows that this second product converges. Therefore $(\widetilde \kappa_\nu)_{\nu\in{\cal F}}$ belongs to $\ell^q({\cal F})$.

Finally, we show that $\widetilde \kappa_\nu$ is monotone nonincreasing provided that $J$ is chosen large enough. It suffices to show that $\widetilde \kappa_{\nu+e_j}\leq \widetilde \kappa_\nu$ for all $\nu\in {\cal F}$ and for all $j\geq 1$ where 
$$
e_j:=(0,\dots,0,1,0,\dots),
$$
is the Kronecker sequence of index $j$. When $j\leq J$ this is obvious since $\widetilde \kappa_{\nu+e_j}=\tau b_j\widetilde \kappa_\nu\leq \theta \widetilde \kappa_\nu \leq \widetilde \kappa_\nu$. When $j>J$, we have
$$
\widetilde \kappa_{\nu+e_j}\widetilde \kappa_{\nu}^{-1}=b_j\left(\frac {1+c_1(\nu_j+1)}{1+c_1\nu_j}  \right)^{c_2}.
$$
Noticing that the sequence  $a_n:=\left(\frac {1+c_1(n+1)}{1+c_1n}  \right)^{c_2}$ converges toward $1$ and is therefore bounded,
and that $b_j$ tends to $0$ as $j\to +\infty$, we find that for $J$ sufficiently large, the right-hand side in the above equation is 
bounded by $1$ for all $\nu$ and $j>J$. 
\end{proof}

From Lemma~\ref{thm:lemmaqm}, by applying Theorem~\ref{thm:theowlower} with $r=1$ or $r=2$, we obtain the following result.

\begin{theorem}
\label{thm:theoleg}
If~\eqref{eq:uearho} holds with 
$(\rho_j^{-1})_{j\geq 1}\in \ell^q(\mathbb{N})$ for some $0<q<\infty$ and $\rho_j>1$ for all $j$, then
$$
\|u-u_{\Lambda_n}\|_{L^2(U,V)}\leq Cn^{-s}, \quad s:=\frac 1 q,
$$
where $u_{\Lambda_n}$ is the truncated Legendre series and $\Lambda_n$ is any downward closed set corresponding to the
$n$ largest $\widehat \kappa_\nu$ where $\kappa_\nu:=b^\nu \beta(\nu)$. If $q<2$, we also have
$$
\|u-u_{\Lambda_n}\|_{L^\infty(U,V)}\leq Cn^{-s}, \quad s:=\frac 1 q-\frac 1 2,
$$
with $\Lambda_n$ any downward closed set corresponding to the $n$ largest $\widehat \kappa_\nu$ where $\kappa_\nu:=b^\nu \beta(\nu)^2$.
\end{theorem}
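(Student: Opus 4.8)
The plan is to combine the weighted $\ell^2$ estimate~\eqref{eq:wl2l} from Theorem~\ref{thm:theosumaff} with the $\ell^q_m$ summability established in Lemma~\ref{thm:lemmaqm}, and then feed these into Theorem~\ref{thm:theowlower}. First I would recall from Theorem~\ref{thm:theosumaff} that the hypothesis~\eqref{eq:uearho} is exactly what is needed to guarantee
$$
\sum_{\nu\in{\cal F}}\left(\beta(\nu)^{-1}\rho^\nu\|w_\nu\|_V\right)^2<\infty
\quad\text{and}\quad
\sum_{\nu\in{\cal F}}\left(\beta(\nu)^{-2}\rho^\nu\|\widetilde w_\nu\|_V\right)^2<\infty,
$$
so for the $L^2$ claim I take $c_\nu:=\|w_\nu\|_V$ and $\omega_\nu:=\beta(\nu)^{-1}\rho^\nu$, whence $\kappa_\nu=\omega_\nu^{-1}=\beta(\nu)b^\nu$; for the $L^\infty$ claim I take $c_\nu:=\|\widetilde w_\nu\|_V$ and $\omega_\nu:=\beta(\nu)^{-2}\rho^\nu$, whence $\kappa_\nu=\beta(\nu)^2 b^\nu$. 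In both cases $\beta(\nu)^a=\prod_{j\geq 1}(1+2\nu_j)^{a/2}$ is an algebraic factor of the form $\alpha(\nu)=\prod_{j\geq1}(1+c_1\nu_j)^{c_2}$ with $c_1=2$ and $c_2=a/2\geq 0$, so Lemma~\ref{thm:lemmaqm} applies directly: since $(\rho_j^{-1})_{j\geq1}=b\in\ell^q(\mathbb N)$ (and, after the harmless renormalization allowed by $\rho_j>1$, $\|b\|_{\ell^\infty}<1$), we get $(\kappa_\nu)_{\nu\in{\cal F}}\in\ell^q_m({\cal F})$.

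Next I would invoke Theorem~\ref{thm:theowlower}. For the $L^2$ estimate I apply it with $r=2$: the condition $\frac 1q>\frac1r-\frac12=0$ is automatic, and the conclusion~\eqref{eq:rdownwardrate} gives $\big(\sum_{\nu\notin\Lambda_n}\|w_\nu\|_V^2\big)^{1/2}\leq Cn^{-s}$ with $s=\frac1q+\frac12-\frac12=\frac1q$. Since the Legendre system $(L_\nu)_{\nu\in{\cal F}}$ is an orthonormal basis of $L^2(U,d\mu)$, the identity~\eqref{eq:estimL2} identifies this tail with $\|u-u_{\Lambda_n}\|_{L^2(U,V)}$ exactly, yielding the first displayed bound. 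For the $L^\infty$ estimate, assuming $q<2$, I apply Theorem~\ref{thm:theowlower} with $r=1$: now the hypothesis $\frac1q>\frac1r-\frac12=\frac12$ is precisely the assumption $q<2$, and the conclusion gives $\sum_{\nu\notin\Lambda_n}\|\widetilde w_\nu\|_V\leq Cn^{-s}$ with $s=\frac1q+\frac12-1=\frac1q-\frac12$. Because the renormalized Legendre polynomials $\widetilde L_\nu$ are normalized in $L^\infty(U)$ (with $\widetilde L_\nu(\mathbf 1)=1$), the triangle-inequality estimate~\eqref{eq:estimLinf} bounds $\|u-u_{\Lambda_n}\|_{L^\infty(U,V)}$ by this $\ell^1$ tail, giving the second displayed bound. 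The nestedness and $\#(\Lambda_n)=n$ come for free from Theorem~\ref{thm:theowlower}, and one should note that the $\Lambda_n$ is built from the $n$ largest $\widehat\kappa_\nu$ (the monotone majorant), which is why the theorem statement refers to $\widehat\kappa_\nu$ rather than $\kappa_\nu$ itself.

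I do not expect a serious obstacle here, since every ingredient is already in place; the statement is essentially a bookkeeping assembly. The one point that requires a little care is matching the two error representations to the two weightings: one must use the orthonormal (L^2-normalized) Legendre expansion~\eqref{eq:legendre} together with the weight $\beta(\nu)^{-1}\rho^\nu$ for the $L^2$ bound, but switch to the $L^\infty$-normalized renormalized Legendre expansion~\eqref{eq:renormlegendre} together with the weight $\beta(\nu)^{-2}\rho^\nu$ for the $L^\infty$ bound — the extra power of $\beta(\nu)$ being exactly the price of the $L^\infty$ normalization, as recorded in~\eqref{eq:wl2l}. A second minor point: to apply Lemma~\ref{thm:lemmaqm} one needs $\|b\|_{\ell^\infty}<1$; this is guaranteed by $\rho_j>1$, and if one only had $\rho_j\geq 1+\delta$ one could rescale the $\psi_j$, but under the stated hypothesis nothing extra is needed. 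The remaining steps — reading off the exponent $s$ in each case and quoting~\eqref{eq:estimL2}/\eqref{eq:estimLinf} — are routine.
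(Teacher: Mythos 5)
Your proposal is correct and follows exactly the route the paper intends: the weighted $\ell^2$ bounds~\eqref{eq:wl2l} from Theorem~\ref{thm:theosumaff}, the $\ell^q_m$ summability of $b^\nu\beta(\nu)$ and $b^\nu\beta(\nu)^2$ from Lemma~\ref{thm:lemmaqm}, and Theorem~\ref{thm:theowlower} with $r=2$ (resp.\ $r=1$) combined with~\eqref{eq:estimL2} (resp.~\eqref{eq:estimLinf}). The paper states this only as a one-line derivation, and your write-up is a faithful, correctly bookkept expansion of it, including the right pairing of the two normalizations of the Legendre system with the two weights.
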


Finally, in the case of the Hermite coefficients, which corresponds to the weight
\begin{equation}
\kappa_\nu:= \prod_{j\geq 1}\left( \sum_{l=0}^r {\nu_j \choose l}b_j^{-2l}   \right)^{-1/2},
\label{eq:bnuherm}
\end{equation}
we can establish a similar summability result.

\begin{lemma} 
\label{thm:lemmaqmh}
For any $0<q<\infty$ and any integer $r\geq 1$ such that $q> \frac 2 r$, we have
$$
b\in \ell^q(\mathbb{N})\implies (\kappa_\nu)_{\nu\in{\cal F}} \in \ell^q
({\cal F}),
$$
where $\kappa_\nu$ is given by~\eqref{eq:bnuherm}. 
In addition, for any integer $r\geq 0$, the sequence $(\kappa_\nu)_{\nu \in {\cal F}}$ is monotone nonincreasing. 
\end{lemma}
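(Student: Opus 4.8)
The plan is to mimic the factorization argument already used for Lemma~\ref{thm:lemmaq} and Lemma~\ref{thm:lemmaqm}, treating each univariate factor $\sigma_j(n):=\left(\sum_{l=0}^r {n\choose l} b_j^{-2l}\right)^{-1/2}$ separately. First I would dispose of the easy ``in addition'' claim: for fixed $j$ the quantity $\sum_{l=0}^r {n\choose l} b_j^{-2l}$ is nondecreasing in $n$ (since ${n+1\choose l}\geq{n\choose l}$ for every $l$), hence $\sigma_j$ is nonincreasing in $n$; since $\kappa_\nu=\prod_j \sigma_j(\nu_j)$ is a product of factors each of which is nonincreasing in the corresponding coordinate and each $\sigma_j(0)=1$, increasing any coordinate $\nu_j$ can only decrease $\kappa_\nu$, so $(\kappa_\nu)_{\nu\in{\cal F}}$ is monotone nonincreasing for any $r\geq 0$. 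This also means $\ell^q_m$ and $\ell^q$ coincide for this sequence, matching the statement.

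For the summability implication I would write, by the factorization already invoked in the excerpt,
\[
\sum_{\nu\in{\cal F}}\kappa_\nu^q=\prod_{j\geq 1}\left(\sum_{n\geq 0}\left(\sum_{l=0}^r {n\choose l} b_j^{-2l}\right)^{-q/2}\right).
\]
The task is to show this infinite product converges when $b\in\ell^q(\mathbb N)$ and $q>2/r$. I would isolate the $l=r$ term in the inner sum to get the lower bound $\sum_{l=0}^r{n\choose l}b_j^{-2l}\geq {n\choose r} b_j^{-2r}$ for $n\geq r$, hence $\sigma_j(n)\leq {n\choose r}^{-1/2} b_j^{r}$ for $n\geq r$, and therefore each tail $\sum_{n\geq r}\sigma_j(n)^q\leq b_j^{rq}\sum_{n\geq r}{n\choose r}^{-q/2}$. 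The series $\sum_{n\geq r}{n\choose r}^{-q/2}$ converges precisely because ${n\choose r}\sim n^r/r!$ and $rq/2>1$ by the hypothesis $q>2/r$; call its value $S_{r,q}<\infty$. Adding the finitely many terms $n=0,\dots,r-1$ (each at most $1$, since $\sigma_j(n)\leq \sigma_j(0)=1$) gives $\sum_{n\geq 0}\sigma_j(n)^q\leq r + S_{r,q}\, b_j^{rq}$. Since $rq\geq q$ wait — actually $rq> q$ only if $r>1$; for $r\geq 1$ one has $rq\geq q$, and $b\in\ell^q$ with $b_j\to 0$ gives $b_j^{rq}\leq b_j^{q}$ for all large $j$, so $\sum_j b_j^{rq}<\infty$.

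The one subtlety, and the place I'd be most careful, is that $\sum_{n\geq 0}\sigma_j(n)^q$ need not be bounded by $1+(\text{summable})$ uniformly — the constant ``$r$'' in the bound $r+S_{r,q}b_j^{rq}$ is bigger than $1$, so $\prod_j(r+S_{r,q}b_j^{rq})$ would diverge. This is exactly the issue already handled in the excerpt's remark on Hermite coefficients via choosing things carefully, and the fix is to get a genuine $1+o(1)$ bound for large $j$: for $j$ large enough that $b_j<1$, the $l=0$ term already contributes $1$, and the remaining terms $\sum_{l=1}^r{n\choose l}b_j^{-2l}$ — wait, $b_j^{-2l}$ blows up as $b_j\to 0$. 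So the honest route is the reverse normalization: the hypothesis in the underlying theorem was stated with $\rho_j$ (so $b_j=\rho_j^{-1}$), and one should keep $\rho_j\to\infty$, i.e. work with $\sum_{l=0}^r{n\choose l}\rho_j^{2l}\geq 1 + n\rho_j^2$ for $n\geq 1$, giving $\sigma_j(n)\leq (1+n\rho_j^2)^{-1/2}\leq \rho_j^{-1} n^{-1/2}$ for $n\geq 1$. Then $\sum_{n\geq 0}\sigma_j(n)^q\leq 1+\rho_j^{-q}\sum_{n\geq 1}n^{-q/2}$, and this time the constant multiplying the summable factor is attached to a genuine ``$1+\cdots$'', provided $q>2$; for the range $2/r<q\leq 2$ one instead uses the $l=r$ term to get $\sigma_j(n)\leq {n\choose r}^{-1/2}\rho_j^{-r}$, yielding $\sum_{n\geq 1}\sigma_j(n)^q\leq S_{r,q}\rho_j^{-rq}$ which is $o(1)$ in $j$, so $\sigma_j(n)^q$ summed over $n\geq 1$ is at most $1$ for all large $j$ and one concludes $\prod_j\sum_{n\geq 0}\sigma_j(n)^q\leq (\text{finite front factor})\cdot\prod_{j\ \mathrm{large}}(1+S_{r,q}\rho_j^{-rq})<\infty$ since $\sum_j\rho_j^{-rq}\leq\sum_j\rho_j^{-q}<\infty$. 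Assembling these pieces gives $(\kappa_\nu)_{\nu\in{\cal F}}\in\ell^q({\cal F})$, which by the monotonicity already shown is the same as $\ell^q_m({\cal F})$.
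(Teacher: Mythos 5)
Your monotonicity argument is correct and is exactly the paper's: $\binom{n+1}{l}\geq\binom{n}{l}$ makes each univariate factor $\sigma_j$ nonincreasing, and the product structure does the rest. Your summability argument also starts from the same factorization as the paper, and you correctly diagnose the one real subtlety, namely that the crude bound $\sum_{n\geq 0}\sigma_j(n)^q\leq r+S_{r,q}b_j^{rq}$ is useless because the leading constant is $r$ rather than $1$, so the infinite product over $j$ would diverge.

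However, your patch for the range $2/r<q\leq 2$ has a genuine hole. In that range one necessarily has $r\geq 2$, so the indices $n=1,\dots,r-1$ really occur; for those $n$ the bound $\sigma_j(n)\leq\binom{n}{r}^{-1/2}\rho_j^{-r}$ is vacuous because $\binom{n}{r}=0$, and your claimed estimate $\sum_{n\geq 1}\sigma_j(n)^q\leq S_{r,q}\rho_j^{-rq}$ silently drops these terms (your $S_{r,q}$ only sums over $n\geq r$). The clean repair --- and it is exactly what the paper does --- is to retain the term $l=n\wedge r$ of the inner sum for \emph{every} $n$, which gives $\sigma_j(n)^q\leq\binom{n}{n\wedge r}^{-q/2}b_j^{q(n\wedge r)}$ and hence bounds the $j$th factor by $1+b_j^q+\cdots+b_j^{(r-1)q}+C_{r,q}b_j^{rq}\leq 1+(C_{r,q}+r-1)b_j^{q}$ once $b_j<1$; this treats all $q>2/r$ uniformly and makes your case split $q>2$ versus $q\leq 2$ unnecessary. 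Equivalently, it suffices to add to your argument the one-line bound $\sigma_j(n)\leq\rho_j^{-n}$ for $1\leq n\leq r-1$ (from the $l=n$ term), after which your proof is complete and coincides with the paper's.
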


\begin{proof} 
For any $\nu \in {\cal F}$ and any $k\geq 1$ we have 
\begin{align*}
\kappa_{\nu + e_k} 
& = 
\left(
\sum_{l=0}^r \binom{\nu_k + 1}{ l} b_k^{-2l}
\right)^{-1/2}
\prod_{j \geq 1 \atop j \neq k} \left(
\sum_{l=0}^r \binom{\nu_j}{ l} b_j^{-2l}
\right)^{-1/2}
\\
& \leq 
\left(
\sum_{l=0}^r \binom{\nu_k}{ l} b_k^{-2l}
\right)^{-1/2}
\prod_{j \geq 1 \atop j \neq k} 
\left(
\sum_{l=0}^r \binom{\nu_j}{ l} b_j^{-2l}
\right)^{-1/2}
=
\kappa_{\nu}, 
\end{align*}
and therefore the sequence $(\kappa_\nu)_{\nu \in {\cal F}}$ is monotone nonincreasing.

Now we check that $(\kappa_\nu)_{\nu\in{\cal F}} \in \ell^q({\cal F})$, using the factorization
\begin{equation}
\label{eq:factorization_proof}
\sum_{\nu\in{\cal F}} \kappa_\nu^q=\prod_{j\geq 1}  \sum_{n \geq 0}\left( \sum_{l=0}^r {n \choose l}b_j^{-2l}   \right)^{-q/2} 
\leq 
\prod_{j\geq 1}\sum_{n\geq 0}{n \choose r\wedge n}^{-q/2}b_j^{q(r\wedge n)}.
\end{equation}
where the inequality follows from the fact that the value $l=n\wedge r:=\min\{n,r\}$ is contained in the sum.

The $j$th factor $F_j$ in the rightmost product in~\eqref{eq:factorization_proof} may be written as
$$
F_j= 1+b_j^q+\cdots+b_j^{(r-1)q}+C_{r,q}b_j^{rq},
$$
where
\begin{equation}
 C_{r,q} := \sum_{n \geq r} {n\choose r}^{-q/2} = (r!)^{q/2} \sum_{n\geq 0} \bigl[(n+1)\cdots (n+r)\bigr]^{-q/2}<\infty,
\end{equation}   
since we have assumed that $q> 2/r$. This shows that each $F_j$ is finite.
If $b\in \ell^q(\N)$, there exists an integer $J\geq 0$ such that $b_j <1$ for all $j> J$. For such $j$, we can bound $F_j$
by $1 + (C_{r,q} + r - 1) b_j^{q}$, which shows that the product converges.
\end{proof}

From this lemma, and by application of Theorem~\ref{thm:theowlower} with the value $r=2$, we obtain the following result for the Hermite series.

\begin{theorem}
\label{thm:theoherm}
If~\eqref{eq:psicr1} holds with $(\rho_j^{-1})_{j\geq 1}\in \ell^q(\mathbb{N})$ for some $0<q<\infty$, then
$$
\|u-u_{\Lambda_n}\|_{L^2(U,V)}\leq Cn^{-s}, \quad s:=\frac 1 q,
$$
where $u_{\Lambda_n}$ is the truncated Hermite series and $\Lambda_n$ is a downward closed set corresponding to the $n$ largest $\kappa_\nu$ given by~\eqref{eq:bnuherm}.
\end{theorem}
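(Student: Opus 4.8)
The plan is to derive Theorem~\ref{thm:theoherm} from the weighted $\ell^2$ estimate~\eqref{eq:wl2h} for the Hermite coefficients, combined with Lemma~\ref{thm:lemmaqmh}, Theorem~\ref{thm:theowlower}, and the Parseval identity~\eqref{eq:estimL2}, following the pattern of the Taylor and Legendre cases treated in Theorems~\ref{thm:theotaylor} and~\ref{thm:theoleg}. The only genuinely new ingredient is the preliminary rescaling of $\rho$ that makes the hypotheses behind~\eqref{eq:wl2h} available.

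First I would fix an integer $r$ with $q>\frac 2 r$. Both hypotheses of the theorem, namely~\eqref{eq:psicr1} and $(\rho_j^{-1})_{j\geq1}\in\ell^q(\mathbb N)$, are unaffected by the rescaling $\rho\mapsto\theta\rho$ for any $\theta>0$, so I may choose $\theta$ small enough that $\sup_{x\in D}\sum_{j\geq1}\theta\rho_j|\psi_j(x)|<C_r=\frac{\ln 2}{\sqrt r}$ and, after relabelling, assume that $\rho$ itself satisfies this bound, i.e.~\eqref{eq:psicr} (the weight $\kappa_\nu$ in the statement being then understood with respect to this rescaled $\rho$). Moreover, $\ell^q$ summability of $(\rho_j^{-1})_{j\geq1}$ forces $\rho_j\to\infty$, hence $\exp(-\rho_j^2)\leq\rho_j^{-q}$ for $j$ large and $\sum_{j\geq1}\exp(-\rho_j^2)<\infty$. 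With all hypotheses behind~\eqref{eq:wl2h} now in force I obtain $\sum_{\nu\in{\cal F}}\xi_\nu\|h_\nu\|_V^2<\infty$, where $\xi_\nu=\prod_{j\geq1}\bigl(\sum_{l=0}^r\binom{\nu_j}{l}\rho_j^{2l}\bigr)$. Putting $c_\nu:=\|h_\nu\|_V$, $\omega_\nu:=\xi_\nu^{1/2}$ and $\kappa_\nu:=\omega_\nu^{-1}$, this reads $\sum_{\nu\in{\cal F}}(\omega_\nu c_\nu)^2<\infty$, and $\kappa_\nu$ is exactly the weight~\eqref{eq:bnuherm} with $b_j=\rho_j^{-1}$.

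Then I would invoke Lemma~\ref{thm:lemmaqmh}: since $b=(\rho_j^{-1})_{j\geq1}\in\ell^q(\mathbb N)$ and $q>\frac 2 r$, the sequence $(\kappa_\nu)_{\nu\in{\cal F}}$ belongs to $\ell^q({\cal F})$, and being monotone nonincreasing it equals its own monotone majorant, so $(\kappa_\nu)_{\nu\in{\cal F}}\in\ell^q_m({\cal F})$. This is precisely the setting of Theorem~\ref{thm:theowlower}, which I would apply with the exponent $r=2$ in the notation of that statement: the requirement $\frac 1 q>\frac 1 2-\frac 1 2=0$ holds trivially, and one obtains a nested sequence $(\Lambda_n)_{n\geq1}$ of downward closed sets with $\#(\Lambda_n)=n$ and $\bigl(\sum_{\nu\notin\Lambda_n}\|h_\nu\|_V^2\bigr)^{1/2}\leq Cn^{-s}$ with $s=\frac 1 q+\frac 1 2-\frac 1 2=\frac 1 q$. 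Since $(\kappa_\nu)$ is monotone nonincreasing, these $\Lambda_n$ are precisely the downward closed sets formed by the indices of the $n$ largest $\kappa_\nu$, as in the statement. Finally, $\xi_\nu\geq1$ already gives $\sum_{\nu\in{\cal F}}\|h_\nu\|_V^2<\infty$, so $u\in L^2(U,V,d\gamma)$, and since $(H_\nu)_{\nu\in{\cal F}}$ is an orthonormal basis of $L^2(U,d\gamma)$ the identity~\eqref{eq:estimL2} (with $d\gamma$ replacing $d\mu$) yields $\|u-u_{\Lambda_n}\|_{L^2(U,V)}=\bigl(\sum_{\nu\notin\Lambda_n}\|h_\nu\|_V^2\bigr)^{1/2}\leq Cn^{-1/q}$, which is the claim.

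I expect no serious difficulty, since the combinatorial heart of the argument already resides in Lemma~\ref{thm:lemmaqmh}. The one place to be careful is the first step: selecting $r$ as a function of $q$ and rescaling $\rho$ so that the rescaled sequence simultaneously verifies the uniform bound~\eqref{eq:psicr}, the $\ell^q$ summability of its reciprocals, and $\sum_j\exp(-\rho_j^2)<\infty$; once~\eqref{eq:wl2h} is in hand the remainder is the same concatenation of Lemma~\ref{thm:lemmaqmh}, Theorem~\ref{thm:theowlower} and~\eqref{eq:estimL2} already used for the Taylor and Legendre series.
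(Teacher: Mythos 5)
Your proposal is correct and follows essentially the same route as the paper: the paper derives Theorem~\ref{thm:theoherm} directly by combining the weighted $\ell^2$ bound~\eqref{eq:wl2h} with Lemma~\ref{thm:lemmaqmh} and Theorem~\ref{thm:theowlower} applied with $r=2$, then concluding via the Parseval identity~\eqref{eq:estimL2}. Your additional care about rescaling $\rho$ to pass from~\eqref{eq:psicr1} to~\eqref{eq:psicr}, and about $\sum_j\exp(-\rho_j^2)<\infty$, is a legitimate filling-in of details the paper only gestures at in the remark following~\eqref{eq:psicr1}.
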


In summary, we have established convergence rates for approximation
by downward closed polynomial spaces of the solution map~\eqref{eq:solmap}
associated to the elliptic PDE~\eqref{eq:ellip} with affine or lognormal parametrization.
The conditions are stated in terms of the control on the $L^\infty$ norm
of $\sum_{j\geq 1} \rho_j|\psi_j|$, where the $\rho_j$ have a certain growth
measured by the $\ell^q$ summability of the sequence $b=(b_j)_{j\geq 1}=(\rho_j^{-1})_{j\geq 1}$. This 
is a way to quantify the decay of the size of the $\psi_j$, also taking
their support properties into account, and in turn to quantify the anisotropic dependence
of $u(y)$ on the various coordinates $y_j$.
Other similar results have been obtained with different PDE models, see 
in particular \cite{CD}. In the above results, the polynomial approximants are constructed
by truncation of infinite series. The remainder of the paper addresses the construction
of downward closed polynomial approximants from evaluations of the solution
map at $m$ points $\{y^1,\dots,y^m\}\in U$, and discusses the accuracy of these approximants.

\section{Interpolation}
\label{sec:3}

\subsection{Sparse Interpolation by Downward Closed Polynomials}
\label{sec:3.1}

Interpolation is one of the most standard processes for constructing polynomial approximations based on pointwise evaluations. Given a downward closed set $\Lambda \subset {\cal F}$ of finite cardinality, and a set of points 
$$
\Gamma\subset  U, \quad \#(\Gamma)=\#(\Lambda),
$$
we would like to build an interpolation operator $I_\Lambda$, that is, $I_\Lambda u\in \mathbb{V}_\Lambda$ is uniquely characterized by
$$
I_\Lambda u(y)=u(y), \quad y\in\Gamma,
$$
for any $V$-valued function $u$ defined on $U$. 

In the univariate case, it is well known that such an operator exists if and only if $\Gamma$ is a set of pairwise distinct points, and that additional conditions are needed in the multivariate case. Moreover, since the set $\Lambda$ may come from a nested sequence $(\Lambda_n)_{n\geq 1}$ 
as discussed in Section~\ref{sec:2}, we are interested in having similar nestedness properties for the corresponding sequence $(\Gamma_n)_{n\geq 1}$, where 
$$
\#(\Gamma_n)=\#(\Lambda_n)=n.
$$
Such a nestedness property allows us to recycle the $n$ evaluations of $u$ which have been used in the computation of $I_{\Lambda_n} u$,
and use only one additional evaluation for the next computation of $I_{\Lambda_{n+1}} u$.

It turns out that such hierarchical interpolants can be constructed in a natural manner by making use of the downward closed
structure of the index sets $\Lambda_n$. This construction is detailed in  \cite{CCS1}
but its main principles can be traced from \cite{K}. In order to describe
it, we assume that the parameter domain is of either form
$$
U=[-1,1]^d \quad {\rm or} \quad [-1,1]^\mathbb{N},
$$
with the convention that $d=\infty$ in the second case. However, it is easily check
that the construction can be generalized in a straightforward manner 
to any domain with Cartesian product form
$$
U=\underset{k\geq 1}{\times} J_k,
$$
where the $J_k$ are finite or infinite intervals.

We start from a sequence of pairwise distinct points
$$
T=(t_k)_{k\geq 0}\subset [-1,1].
$$
We denote by $I_k$ the univariate interpolation operator on the space
$
\mathbb{V}_k:=V\otimes \mathbb{P}_k
$
associated with the $k$-section $\{t_0,\dots,t_k\}$ of this 
sequence, that is, 
$$
I_ku(t_i)=u(t_i),\quad i=0,\dots,k,
$$
for any $V$-valued function $u$ defined on $[-1,1]$. 
We express $I_k$ in the Newton form
\begin{equation}
I_ku=I_0u+\sum_{l=1}^k\Delta_l u,\quad \Delta_l:=I_l-I_{l-1},
\label{eq:newton}
\end{equation}
and set $I_{-1}=0$ so that we can also write
$$
I_ku=\sum_{l=0}^k \Delta_l u.
$$
Obviously the difference operator $\Delta_k$ annihilates the elements of $\mathbb{V}_{k-1}$.
In addition, since $\Delta_k u(t_j)=0$ for $j=0,\dots,k-1$, we have
$$
\Delta_k u(t)=\alpha_k B_k(t),
$$
where 
$$
B_{k}(t):=\prod_{l=0}^{k-1}\frac {t-t_l}{t_k-t_l}.
$$
The coefficient $\alpha_k\in V$ can be computed inductively, since it is given by
$$
\alpha_k=\alpha_k(u):=u(t_k)-I_{k-1}u(t_k),
$$
that is, the interpolation error at $t_k$ when using $I_{k-1}$.
Setting
$$
B_0(t):=1,
$$
we observe that the system $\{B_0,\dots,B_k\}$ is a basis for $\mathbb{P}_k$.
It is sometimes called a {\it hierarchical basis}.

In the multivariate setting, we tensorize the grid $T$, by defining
$$
y_\nu:=(t_{\nu_j})_{j\geq 1}\in U, \quad \nu\in{\cal F}.
$$
We first introduce the tensorized operator
$$
I_\nu:=\bigotimes_{j\geq 1} I_{\nu_j},
$$
recalling that the application of a tensorized operator $\otimes_{j\geq 1} A_j$ to a multivariate function amounts in applying each univariate operator $A_j$ by freezing all variables except the $j$th one, and then applying $A_j$ to the unfrozen variable.
It is readily seen that $I_\nu$ is the interpolation operator on the tensor product polynomial space 
$$
\mathbb{V}_\nu=V\otimes \mathbb{P}_\nu, \quad  \mathbb{P}_\nu:=\bigotimes_{j\geq 1} \mathbb{P}_{\nu_j},
$$
associated to the grid of points
$$
\Gamma_{\nu}=\underset{j\geq 1}{\times}
\{t_{0},\dots,t_{\nu_j}\}.
$$
This polynomial space corresponds to the particular downward closed index set of rectangular shape
$$
\Lambda=R_\nu:=\{ \widetilde \nu \; : \; \widetilde \nu \leq \nu \}.
$$
Defining in a similar manner the tensorized 
difference operators
$$
\Delta_\nu:=\bigotimes_{j\geq 1}\Delta_{\nu_j},
$$
we observe that 
$$
I_\nu=\bigotimes_{j\geq 1} I_{\nu_j}=\bigotimes_{j\geq 1} (\sum_{l=0}^{\nu_j} \Delta_l)=\sum_{\widetilde\nu\in R_\nu} \Delta_{\widetilde\nu}.
$$
The following result from \cite{CCS1} shows that the above formula can be generalized to {\it any} downward closed set
in order to define an interpolation operator. We recall its proof for sake of completeness.

\begin{theorem}
\label{thm:Thmlowerint}
Let $\Lambda\subset {\cal F}$ be a finite downward closed set, and define the grid
$$
\Gamma_\Lambda:=\{y_\nu \; : \; \nu\in \Lambda\}.
$$
Then, the interpolation operator onto $\mathbb{V}_\Lambda$ for this grid is defined by
\begin{equation}
I_\Lambda:=\sum_{\nu\in \Lambda} \Delta_\nu.
\label{eq:newtongen}
\end{equation}
\end{theorem}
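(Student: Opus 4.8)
The plan is to prove two things: first, that $I_\Lambda := \sum_{\nu \in \Lambda} \Delta_\nu$ maps into $\mathbb{V}_\Lambda$, and second, that it reproduces the data on the grid $\Gamma_\Lambda$, i.e.\ $I_\Lambda u(y_\mu) = u(y_\mu)$ for all $\mu \in \Lambda$. Since $\#(\Gamma_\Lambda) = \#(\Lambda) = \dim \mathbb{V}_\Lambda$, these two facts together imply that $I_\Lambda$ is \emph{the} interpolation operator onto $\mathbb{V}_\Lambda$ for that grid (in particular the grid is unisolvent for $\mathbb{V}_\Lambda$).

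For the first part, I would argue that each $\Delta_\nu u = \bigotimes_{j\geq 1}\Delta_{\nu_j} u$ lies in $\mathbb{V}_\nu = V \otimes \bigotimes_j \mathbb{P}_{\nu_j}$, because the univariate $\Delta_l$ maps into $\mathbb{V}_l = V\otimes\mathbb{P}_l$. Since $\nu \in \Lambda$ and $\Lambda$ is downward closed, $\mathbb{V}_\nu \subset \mathbb{V}_\Lambda$ (every $\widetilde\nu \leq \nu$ belongs to $\Lambda$, so every monomial $y^{\widetilde\nu}$ spanning $\mathbb{P}_\nu$ is available in $\mathbb{P}_\Lambda$). Summing over $\nu \in \Lambda$ keeps us in $\mathbb{V}_\Lambda$.

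For the interpolation property, the key identity is that for each index $\mu$, the tensorized difference operator $\Delta_\mu = \bigotimes_j \Delta_{\mu_j}$ has the hierarchical form $\Delta_\mu u = \alpha_\mu(u)\, B_\mu$ with $B_\mu(y) = \prod_j B_{\mu_j}(y_j)$, and $B_{\mu_j}$ vanishes at $t_0,\dots,t_{\mu_j-1}$ while $B_{\mu_j}(t_{\mu_j}) = 1$. Hence $B_\mu(y_\nu) = 0$ unless $\mu \leq \nu$ coordinatewise, and $B_\mu(y_\mu)=1$. Evaluating $I_\Lambda u$ at a point $y_\mu$ with $\mu \in \Lambda$ therefore collapses the sum: $I_\Lambda u(y_\mu) = \sum_{\nu \in \Lambda,\ \nu \leq \mu} \alpha_\nu(u) B_\nu(y_\mu)$. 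Since $\Lambda$ is downward closed, the constraint $\nu \in \Lambda$ is automatic once $\nu \leq \mu$, so the sum equals $\sum_{\nu \leq \mu}\Delta_\nu u(y_\mu) = I_{R_\mu} u(y_\mu)$, the full tensor-product interpolant on the rectangle $R_\mu$, which by construction equals $u(y_\mu)$. This is exactly the place where downward closedness is essential: without it, the sum $\sum_{\nu\in\Lambda, \nu\le\mu}$ could miss some $\nu \leq \mu$ and fail to telescope to $I_{R_\mu}u$.

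I expect the main obstacle to be making the collapsing argument fully rigorous in the infinite-dimensional setting $d=\infty$: one must check that all products and sums are genuinely finite (each $\nu \in {\cal F}$ has finite support, and $\Lambda$ being finite and downward closed means only finitely many coordinates are ever active), and that the tensor operators $\bigotimes_j \Delta_{\nu_j}$ and $\bigotimes_j I_{\nu_j}$ are well-defined on $V$-valued functions via the ``freeze all but one variable'' prescription, with the factors in each product being the identity for all but finitely many $j$ (since $\Delta_0 = I_0$ and for $\nu_j = 0$ the operator acts trivially on constants in that variable in the relevant sense). Once the bookkeeping is in place, the telescoping identity $\sum_{\nu \leq \mu}\Delta_\nu = I_{R_\mu}$ from the rectangular case, already recorded in the excerpt, does all the real work.
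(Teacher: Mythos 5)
Your proof is correct and follows essentially the same route as the paper's: the range is contained in $\mathbb{V}_\Lambda$ because downward closedness gives $\mathbb{V}_\nu\subset\mathbb{V}_\Lambda$, and the interpolation property at $y_\mu$ follows because the terms $\Delta_{\widetilde\nu}u$ with $\widetilde\nu\nleq\mu$ vanish at $y_\mu$ (a coordinate $j$ with $\widetilde\nu_j>\mu_j$ forces this), while the surviving terms sum, by downward closedness, to the rectangular interpolant $I_{R_\mu}u(y_\mu)=u(y_\mu)$. The only cosmetic difference is that you route the vanishing through the identity $\Delta_{\widetilde\nu}u=\alpha_{\widetilde\nu}B_{\widetilde\nu}$ and the zeros of $B_{\widetilde\nu_j}$, whereas the paper applies the univariate fact $\Delta_k u(t_l)=0$ for $l<k$ directly in the offending coordinate; both are valid.
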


\begin{proof} 
From the downward closed set property,  $\mathbb{V}_\nu\subset \mathbb{V}_\Lambda$ for all $\nu\in\Lambda$. 
Hence the image of $I_\Lambda$  is contained in $\mathbb{V}_\Lambda$. 
With $I_\Lambda$ defined by~\eqref{eq:newtongen}, we may write
$$
I_\Lambda u=I_{\nu}u+\sum_{\widetilde \nu\in\Lambda, \widetilde \nu \nleqslant \nu} \Delta_{\widetilde \nu} u,
$$
for any $\nu\in \Lambda$. Since $y_\nu\in \Gamma_\nu$, we know that
$$
I_\nu u(y_\nu)=u(y_\nu).
$$
On the other hand, if $\widetilde \nu \nleqslant \nu$, this means that
there exists a $j\geq 1$ such that $\widetilde \nu_j>\nu_j$. For this $j$ we thus have
$\Delta_{\widetilde \nu} u(y)=0$ for all $y\in U$ with the $j$th coordinate  equal to $t_{\nu_j}$ 
by application of $\Delta_{\nu_j}$ in the $j$th variable, so that
$$
\Delta_{\widetilde \nu} u(y_\nu)=0.
$$
The interpolation property $I_\Lambda u(y_\nu)=u(y_\nu)$ thus holds, for all $\nu\in \Lambda$.
\end{proof}

The decomposition~\eqref{eq:newtongen} should be viewed
as a generalization of the Newton form~\eqref{eq:newton}. In a similar way,
its terms can be computed inductively: if $\Lambda=\widetilde \Lambda\cup \{\nu\}$ 
where $\widetilde \Lambda$ is a downward closed set, we have
$$
\Delta_\nu u=\alpha_\nu B_\nu, 
$$
where 
$$
B_\nu(y):=\prod_{j\geq 1} B_{\nu_j}(y_j),
$$
and
$$
\alpha_\nu=\alpha_\nu(u):=u(y_\nu)-I_{\widetilde \Lambda} u(y_\nu).
$$
Therefore, if $(\Lambda_n)_{n\geq 1}$ is any nested sequence of downward closed
index sets, we can compute $I_{\Lambda_n}$ by $n$ iterations of
$$
I_{\Lambda_{i}}u=I_{\Lambda_{i-1}}u +\alpha_{\nu^i} B_{\nu^i},
$$
where $\nu^i\in \Lambda_i$ is such that $\Lambda_i=\Lambda_{i-1} \cup\{\nu^i\}$.

Note that $(B_\nu)_{\nu\in \Lambda}$ is a basis of $\mathbb{P}_\Lambda$ and that
any $f \in \mathbb{V}_\Lambda$ has the unique decomposition
$$
f=\sum_{\nu\in \Lambda} \alpha_\nu B_\nu,
$$
where the coefficients $\alpha_\nu=\alpha_\nu(f)\in V$ are defined by the above procedure. 
Also note that $\alpha_\nu(f)$ does not depend on the choice of $\Lambda$
but only on $\nu$ and $f$.

\subsection{Stability and Error Estimates}
\label{sec:3.2}

The pointwise evaluations of the function $u$ 
could be affected by errors, as modeled by~\eqref{eq:noisy}
and~\eqref{eq:noisebound}.
The stability of the interpolation operator with respect to such 
perturbations is quantified by the
{\em Lebesgue constant}, which is defined by
$$
\mathbb{L}_\Lambda:=\sup \frac{\|I_\Lambda f\|_{L^\infty(U,V)}} {\|f\|_{L^\infty(U,V)}},
$$
where the supremum is taken over the set of all $V$-valued functions $f$ defined everywhere and uniformly bounded over $U$. It is easily seen
that this supremum is in fact independent of the space $V$, so that we may also write
$$
\mathbb{L}_\Lambda:=\sup \frac{\|I_\Lambda f\|_{L^\infty(U)}} {\|f\|_{L^\infty(U)}},
$$
where the supremum is now taken over real-valued functions.  Obviously, we have
$$
\|u-I_\Lambda(u+\eta)\|_{L^\infty(U,V)} \leq \|u-I_\Lambda u\|_{L^\infty(U,V)}+ \mathbb{L}_\Lambda \varepsilon,
$$
where $\varepsilon$ is the noise level from~\eqref{eq:noisebound}. 

The Lebesgue constant
also allows us to estimate the error
of interpolation $\|u-I_\Lambda u\|_{L^\infty(U,V)}$ 
for the noiseless solution map
in terms of the best approximation error in the $L^\infty$ norm:
for any $u\in L^\infty(U,V)$ and any $\widetilde u\in \mathbb{V}_\Lambda$ we have
$$
\|u-I_\Lambda u\|_{L^\infty(U,V)} \leq \|u-\widetilde u\|_{L^\infty(U,V)}+\|I_\Lambda \widetilde u-I_\Lambda u\|_{L^\infty(U,V)},
$$
which by infimizing over $\widetilde u\in \mathbb{V}_\Lambda$ yields
$$
\|u-I_\Lambda u\|_{L^\infty(U,V)} \leq (1+\mathbb{L}_\Lambda)\inf_{\widetilde u\in \mathbb{V}_\Lambda}\|u-\widetilde u\|_{L^\infty(U,V)}.
$$
We have seen in Section~\ref{sec:2} that for relevant classes of solution maps $y\mapsto u(y)$, there exist sequences of downward closed sets $(\Lambda_n)_{n\geq 1}$ with $\#(\Lambda_n)=n$, such that
$$
\inf_{\widetilde u\in \mathbb{V}_{\Lambda_n}}\|u- \widetilde u\|_{L^\infty(U,V)} \leq Cn^{-s}, \quad n\geq 1,
$$
for some $s>0$. For such sets, we thus have 
\begin{equation}
\|u-I_{\Lambda_n} u\|_{L^\infty(U,V)} \leq C(1+\mathbb{L}_{\Lambda_n})n^{-s}.
\label{eq:Intern}
\end{equation}
This motivates the study of the growth of $\mathbb{L}_{\Lambda_n}$ as $n\to +\infty$.

For this purpose, we introduce the univariate Lebesgue constants
$$
\mathbb{L}_k:=\sup\frac {\|I_k f\|_{L^\infty([-1,1])}} {\|f\|_{L^\infty([-1,1])}}.
$$
Note that $\mathbb{L}_0=1$. We also define an analog quantity for the difference operator
$$
\mathbb{D}_k:=\sup\frac {\|\Delta_k f\|_{L^\infty([-1,1])}} {\|f\|_{L^\infty([-1,1])}}.
$$
In the particular case of the rectangular downward closed sets
$\Lambda=R_\nu$, since $I_\Lambda=I_\nu=\otimes_{j\geq 1}I_{\nu_j}$, we have 
$$
\mathbb{L}_{R_\nu}=\prod_{j\geq 1} \mathbb{L}_{\nu_j}.
$$
Therefore, if the sequence $T=(t_k)_{k\geq 0}$ is such that
\begin{equation}
\mathbb{L}_k \leq (1+k)^\theta ,\quad k\geq 0,
\label{eq:lambdak}
\end{equation}
for some $\theta \geq 1$, we find that
$$
\mathbb{L}_{R_\nu}\leq \prod_{j\geq 1} (1+\nu_j)^\theta= (\#(R_\nu))^\theta,
$$
for all $\nu\in{\cal F}$.

For arbitrary downward closed sets $\Lambda$, the expression of $I_\Lambda$ shows that
$$
\mathbb{L}_\Lambda\leq \sum_{\nu\in \Lambda} \prod_{j\geq 1} \mathbb{D}_{\nu_j}.
$$
Therefore, if the sequence  $T=(t_k)_{k\geq 0}$ is such that
\begin{equation} 
\mathbb{D}_k \leq (1+k)^\theta ,\quad k\geq 0,
\label{eq:dak}
\end{equation}
we find that
$$
\mathbb{L}_\Lambda \leq \sum_{\nu\in \Lambda} \prod_{j\geq 1} (1+\nu_j)^\theta
=\sum_{\nu\in \Lambda} (\#(R_\nu))^\theta \leq \sum_{\nu\in \Lambda} (\#(\Lambda))^\theta = (\#(\Lambda))^{\theta+1}.
$$
The following result from \cite{CCS1} shows that this general estimate is also valid under the assumption~\eqref{eq:lambdak} on the growth of $\mathbb{L}_k$.

\begin{theorem} 
\label{thm:theoLeb}
If the sequence $T=(t_k)_{k\geq 0}$ is such that~\eqref{eq:lambdak} or~\eqref{eq:dak} holds for some $\theta \geq 1$, then
$$
\mathbb{L}_{\Lambda} \leq (\#(\Lambda))^{\theta+1},
$$
for all downward closed sets $\Lambda$.
\end{theorem}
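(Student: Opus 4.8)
Since the assertion under hypothesis~\eqref{eq:dak} is precisely what was verified in the lines preceding the statement, I focus on hypothesis~\eqref{eq:lambdak} (the argument below in fact settles both). The plan is an induction on the number of active coordinates of $\Lambda$, combining three ingredients: a sharpened one‑dimensional estimate for the difference operators $\Delta_k$, a convexity‑type inequality for non‑increasing sequences, and the ``sliced'' form of the Newton decomposition~\eqref{eq:newtongen}. The first ingredient is the scalar lemma: from $\Delta_k=I_k-I_{k-1}$ one gets $\mathbb{D}_k\le\mathbb{L}_k+\mathbb{L}_{k-1}$, so $\mathbb{D}_0=\mathbb{L}_0=1$ and, for $k\ge1$, $\mathbb{D}_k\le(1+k)^\theta+k^\theta$ by~\eqref{eq:lambdak}; and one then checks $(1+k)^\theta+k^\theta\le(1+k)^{\theta+1}-k^{\theta+1}$ for $k\ge1$, which after division by $k^\theta$ reads $(1+1/k)^\theta\ge1+1/k$ and holds because $\theta\ge1$ (Bernoulli). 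Hence, under~\eqref{eq:lambdak} — and, since trivially $(1+k)^\theta\le(1+k)^{\theta+1}-k^{\theta+1}$, also under~\eqref{eq:dak} — one has the uniform bound
$$
\mathbb{D}_k\le w_k:=(1+k)^{\theta+1}-k^{\theta+1},\qquad k\ge0,
$$
and the weights $w_k$ telescope: $\sum_{l=0}^{L}w_l=(1+L)^{\theta+1}$.

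\emph{Combinatorial inequality.} I would next prove that for any non‑increasing sequence $n_0\ge n_1\ge\cdots\ge n_L\ge0$ of nonnegative reals,
$$
\sum_{l=0}^{L}w_l\,n_l^{\theta+1}\le\Bigl(\sum_{l=0}^{L}n_l\Bigr)^{\theta+1}.
$$
This goes by induction on $L$, the case $L=0$ being an equality. Writing $N'=n_0+\cdots+n_{L-1}$, monotonicity gives $N'\ge L\,n_L$, and since $t\mapsto(t+n_L)^{\theta+1}-t^{\theta+1}$ is non‑decreasing on $t\ge0$ we obtain $(N'+n_L)^{\theta+1}-(N')^{\theta+1}\ge\bigl((L+1)n_L\bigr)^{\theta+1}-(Ln_L)^{\theta+1}=w_L\,n_L^{\theta+1}$; adding the inductive bound for $n_0,\dots,n_{L-1}$ closes the step. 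Equality holds for constant sequences, which is exactly why the constant in the theorem comes out equal to $1$.

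\emph{Sliced Newton decomposition and induction.} A finite downward closed $\Lambda$ has finitely many active coordinates; if none, then $\Lambda\subseteq\{0_{\cal F}\}$ and $\mathbb{L}_\Lambda\le1=(\#(\Lambda))^{\theta+1}$. Otherwise choose an active coordinate, say (after relabeling) the first, and set $\Lambda^{[l]}:=\{\nu'=(\nu_j)_{j\ge2}:(l,\nu')\in\Lambda\}$. These are finite downward closed sets in the variables $j\ge2$, nested $\Lambda^{[0]}\supseteq\Lambda^{[1]}\supseteq\cdots$, each with strictly fewer active coordinates than $\Lambda$, and $\sum_l\#(\Lambda^{[l]})=\#(\Lambda)$. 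Factoring $\Delta_\nu=\Delta_{\nu_1}^{(1)}\otimes\Delta_{\nu'}$ in~\eqref{eq:newtongen} and grouping the sum by the value of $\nu_1$, then applying~\eqref{eq:newtongen} once more to each $\Lambda^{[l]}$, yields
$$
I_\Lambda=\sum_{l\ge0}\Delta_l^{(1)}\otimes I_{\Lambda^{[l]}},
$$
where $I_{\Lambda^{[l]}}$ is the interpolation operator onto $\mathbb{V}_{\Lambda^{[l]}}$ in the variables $j\ge2$. Since $\|A\otimes B\|_{L^\infty\to L^\infty}\le\|A\|\,\|B\|$, the scalar lemma gives $\mathbb{L}_\Lambda\le\sum_{l\ge0}\mathbb{D}_l\,\mathbb{L}_{\Lambda^{[l]}}\le\sum_{l\ge0}w_l\,\mathbb{L}_{\Lambda^{[l]}}$. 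By the induction hypothesis, $\mathbb{L}_{\Lambda^{[l]}}\le n_l^{\theta+1}$ with $n_l:=\#(\Lambda^{[l]})$; and $(n_l)$ is non‑increasing with sum $\#(\Lambda)$, so the combinatorial inequality gives $\mathbb{L}_\Lambda\le(\#(\Lambda))^{\theta+1}$, completing the induction.

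\emph{Main obstacle.} The naive route — bounding $\mathbb{D}_k\le2(1+k)^\theta$ and using $\mathbb{L}_\Lambda\le\sum_{\nu\in\Lambda}\prod_j\mathbb{D}_{\nu_j}$ exactly as in the treatment of~\eqref{eq:dak} — loses a factor $2$ per active variable, hence a factor growing like $2^{\dim}$, and is useless in high dimension. The whole point is to not lose that factor: the sharpened bound $\mathbb{D}_k\le(1+k)^{\theta+1}-k^{\theta+1}$ (where $\theta\ge1$ is really used) absorbs the $2$ into the telescoping identity $\sum_{l\le L}w_l=(1+L)^{\theta+1}$, and this identity is precisely what makes the algebraic factor $(1+k)^\theta$ hidden inside $\mathbb{L}_k$ cancel against the constraint $n_l\le\#(\Lambda)/(l+1)$ forced by monotonicity of the slice cardinalities. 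Establishing the combinatorial inequality with the sharp constant $1$ — rather than some dimension‑independent but larger constant, which would be comparatively easy — is the one genuinely delicate computation of the proof.
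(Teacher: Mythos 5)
Your proof is correct: the sharpened bound $\mathbb{D}_k\le(1+k)^{\theta+1}-k^{\theta+1}$ (valid under either \eqref{eq:lambdak} or \eqref{eq:dak} when $\theta\ge1$), the telescoping/convexity inequality for nonincreasing slice cardinalities, and the sliced Newton decomposition $I_\Lambda=\sum_{l}\Delta_l^{(1)}\otimes I_{\Lambda^{[l]}}$ fit together exactly as you describe and yield the constant $1$. The paper itself only verifies the case \eqref{eq:dak} in the lines preceding the statement and defers the case \eqref{eq:lambdak} to the cited reference \cite{CCS1}; your argument is essentially the induction-on-active-coordinates proof given there, so there is nothing to correct.
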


One noticeable feature of the above result is that the bound on $\mathbb{L}_\Lambda$ only depends on $\#(\Lambda)$, independently of
the number of variables, which can be infinite, as well as of the shape of $\Lambda$. 

We are therefore interested in univariate sequences
$T=(t_k)_{k\geq 0}$ such that $\mathbb{L}_k$ and $\mathbb{D}_k$ have moderate growth
with $k$. For Chebyshev or Gauss-Lobatto points, given by
$$
{\cal C}_k:=\left\{\cos\left(\frac {2 l+1}{2k+2} \pi\right)\; : \; l=0,\dots, k\right\} \quad {\rm and}\quad
{\cal G}_k:=\left\{\cos\left(\frac {l}{k} \pi\right)\; : \; l=0,\dots, k\right\},
$$
it is well known that the Lebesgue constant has logarithmic growth $\mathbb{L}_k\sim \ln(k)$, thus slower than algebraic.
However these points are not the $k$ section of a single sequence $T$, and therefore they are not convenient
for our purposes. Two examples of univariate sequences of interest are the following.
\begin{itemize}
\item
The {\it Leja points}: from an arbitrary $t_0\in [-1,1]$ (usually taken to be $1$ or $0$), this sequence
is recursively defined by
$$
t_k:=\argmax \left\{ \prod_{l=0}^{k-1}|t-t_l| \; : \; t\in [-1,1]\right\}.
$$
Note that this choice yields hierarchical basis functions $B_k$
that are uniformly bounded by $1$. Numerical computations of $\mathbb{L}_k$ for the first $200$ values of $k$ 
indicates that the linear
bound 
\begin{equation}
\mathbb{L}_k \leq 1+k,
\label{eq:lineark}
\end{equation}
holds. Proving that this bound, or any other algebraic growth bound, holds for all values of $k\geq 0$ is currently an open problem.
\item
The {\it $\Re$-Leja}  points: they are the real part of the Leja points defined on the complex unit disc $\{|z|\leq 1\}$, taking for example $e_0=1$ and recursively setting
$$
e_k:=\argmax \left\{ \prod_{l=0}^{k-1}|e-e_l| \; : \; |e|\leq 1\right \}.
$$
These points have the property of accumulating in a regular manner on the 
unit circle according to the so-called Van der Corput enumeration \cite{CP1}. 
It is proven in \cite{C} that the linear bound~\eqref{eq:lineark}
holds for the Lebesgue constant of the complex interpolation operator on the unit disc 
associated to these points. The sequence of real parts
$$
t_k:=\Re(e_k),
$$
is defined after eliminating the possible repetitions corresponding to $e_k=\overline e_l$ for
two different values of $k=l$. These points coincide 
with the Gauss-Lobatto points for values of $k$ of the form $2^n+1$ for $n\geq 0$.
A quadratic bound 
$$
\mathbb{D}_k \leq (1+k)^2,
$$
is established in \cite{CC}. 
\end{itemize}

If we use such sequences, application of Theorem~\ref{thm:theoLeb}
gives bounds of the form
$$
\mathbb{L}_\Lambda\leq (\#(\Lambda))^{1+\theta},
$$
for example with $\theta=2$ when using the $\Re$-Leja points, or $\theta=1$
when using the Leja points provided that the conjectured bound~\eqref{eq:lineark} holds. 
Combining with~\eqref{eq:Intern}, we obtain the convergence
estimate
$$
\|u-I_{\Lambda_n} u\|_{L^\infty(U,V)} \leq C n^{-(s-1-\theta)},
$$
which reveals a serious deterioration of the convergence rate when using interpolation instead of truncated expansions.

However, for the parametric PDE models discussed in Section~\ref{sec:2}, it is possible to show that this deterioration actually does not occur, based on the following lemma which relates the interpolation error to the summability of coefficient sequences in general expansions of $u$.

\begin{lemma} 
\label{thm:lemmaresid}
Assume that $u$ admits an
expansion of the type~\eqref{eq:expan}, where $\|\phi_\nu\|_{L^\infty(U)}\leq 1$
which is unconditionally convergent towards $u$ in $L^\infty(U,V)$. Assume in addition
that $y\mapsto u(y)$ is continuous from $U$ equipped with the 
product topology toward $V$. If the univariate sequence $T=(t_k)_{k\geq 0}$ is such 
that~\eqref{eq:lambdak} or~\eqref{eq:dak} holds for some $\theta\geq 1$, 
then, for any downward closed set $\Lambda$,
\begin{equation}
\| u-I_\Lambda u \|_{L^\infty(U,V)} 
\leq 
2\sum_{\nu\notin \Lambda} \pi (\nu) \|u_\nu\|_V, \quad \pi(\nu) := \prod_{j\geq 1} (1+\nu_j)^{\theta+1}.
\label{eq:residual}
\end{equation}
\end{lemma}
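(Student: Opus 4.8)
The plan is to exploit the linearity of $I_\Lambda$ together with the unconditional convergence of the expansion, so that
$$
u - I_\Lambda u = \sum_{\nu\in{\cal F}} u_\nu\bigl(\phi_\nu - I_\Lambda \phi_\nu\bigr),
$$
the interchange of $I_\Lambda$ with the infinite sum being justified by the continuity hypothesis and the fact that $I_\Lambda$ is a finite linear combination of point evaluations (each $\phi_\nu$ is continuous, and the series converges uniformly). For $\nu\in\Lambda$ we have $\phi_\nu\in\mathbb{V}_\Lambda$, so $I_\Lambda\phi_\nu=\phi_\nu$ and those terms drop out, leaving a sum over $\nu\notin\Lambda$. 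Thus it remains to bound $\|\phi_\nu - I_\Lambda \phi_\nu\|_{L^\infty(U,V)}$ for a fixed $\nu\notin\Lambda$ by $2\,\pi(\nu)$, using $\|\phi_\nu\|_{L^\infty(U)}\le 1$.

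The key step is the estimate $\|\phi_\nu - I_\Lambda\phi_\nu\|_{L^\infty}\le 2\pi(\nu)$. First, triangle inequality gives $\|\phi_\nu-I_\Lambda\phi_\nu\|_{L^\infty}\le 1 + \|I_\Lambda\phi_\nu\|_{L^\infty}$, so it suffices to show $\|I_\Lambda\phi_\nu\|_{L^\infty}\le \pi(\nu)$ after noticing that $1\le\pi(\nu)$ and being slightly careful about constants (one in fact wants $\le 2\pi(\nu)$ overall, which is comfortable). The natural route is through the hierarchical decomposition $I_\Lambda=\sum_{\widetilde\nu\in\Lambda}\Delta_{\widetilde\nu}$ from Theorem~\ref{thm:Thmlowerint}: since $\Delta_{\widetilde\nu}\phi_\nu=0$ whenever some component $\widetilde\nu_j$ exceeds $\nu_j$ (because $\Delta_{\nu_j+1}$ annihilates $\mathbb{P}_{\nu_j}\ni\phi_{\nu_j}$ in the $j$-th variable, $\phi_\nu$ being a tensor product of univariate polynomials of degree $\nu_j$), only the indices $\widetilde\nu\in R_\nu\cap\Lambda$ contribute. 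Hence
$$
I_\Lambda\phi_\nu=\sum_{\widetilde\nu\in R_\nu\cap\Lambda}\Delta_{\widetilde\nu}\phi_\nu,
$$
and using $\|\Delta_{\widetilde\nu}\phi_\nu\|_{L^\infty}\le\prod_{j}\mathbb{D}_{\widetilde\nu_j}$ (tensorization) together with the assumed growth bound — either $\mathbb{D}_k\le(1+k)^\theta$ directly, or $\mathbb{D}_k\le\mathbb{L}_k+\mathbb{L}_{k-1}\le 2(1+k)^\theta$ from \eqref{eq:lambdak} — one gets that each term is bounded by a product over $j$ of $(1+\widetilde\nu_j)^\theta$ (times a factor $2$ per nonzero coordinate in the second case). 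Summing over $\widetilde\nu\le\nu$ and using $\#(R_\nu)=\prod_j(1+\nu_j)$ to absorb the number of terms, the total is at most $\prod_j(1+\nu_j)^{\theta+1}=\pi(\nu)$, possibly after the harmless factor $2$; a cleaner variant is to note that under \eqref{eq:lambdak} the partial-sum telescoping $\sum_{\widetilde\nu\in R_\nu\cap\Lambda}\Delta_{\widetilde\nu}$ can be handled by the same argument as in the proof of Theorem~\ref{thm:theoLeb}, which already yields $\mathbb{L}_{\Lambda'}\le(\#\Lambda')^{\theta+1}$ for downward closed $\Lambda'$, applied to a suitable downward closed subset.

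The main obstacle I anticipate is the bookkeeping in the case where only the hypothesis \eqref{eq:lambdak} on $\mathbb{L}_k$ (rather than \eqref{eq:dak} on $\mathbb{D}_k$) is available: then $\mathbb{D}_k$ need not satisfy a clean bound and one must instead argue directly with the operators $I_{\widetilde\nu}$ rather than $\Delta_{\widetilde\nu}$, mirroring the telescoping trick in Theorem~\ref{thm:theoLeb}, to avoid losing a factor that grows with $\#\Lambda$ rather than with $\pi(\nu)$. One should also double-check the justification for pulling $I_\Lambda$ inside the infinite series: since $\|\phi_\nu\|_{L^\infty}\le1$ and $(\|u_\nu\|_V)\in\ell^1$, the partial sums converge uniformly to $u$, each partial sum is continuous, $u$ is continuous by hypothesis, and $I_\Lambda$ — being a finite combination of evaluations at the grid points $y_\nu$ — is continuous on $C(U,V)$ with respect to uniform convergence, so the interchange is legitimate. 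Everything else is routine tensor-product estimation.
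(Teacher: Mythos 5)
Your proposal is correct and follows essentially the same route as the paper: interchange $I_\Lambda$ with the series, use $I_\Lambda\phi_\nu=I_{\Lambda\cap R_\nu}\phi_\nu$ for $\nu\notin\Lambda$, and bound $\mathbb{L}_{\Lambda\cap R_\nu}\leq(\#(R_\nu))^{\theta+1}=\pi(\nu)$ via Theorem~\ref{thm:theoLeb} applied to the downward closed set $\Lambda\cap R_\nu$. Your instinct to abandon the term-by-term $\mathbb{D}_k\leq\mathbb{L}_k+\mathbb{L}_{k-1}$ estimate (which would cost a factor $2^{\#\mathrm{supp}(\nu)}$) in favor of that fallback is exactly what the paper does.
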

\begin{proof}
The unconditional convergence of~\eqref{eq:expan} and the continuity of $u$
with respect to the product topology allow us to say that 
the equality in~\eqref{eq:expan} holds everywhere in $U$. We
may thus write
$$
I_\Lambda u 
=
I_\Lambda\left(\sum_{\nu\in{\cal F}} u_\nu \phi_\nu\right)
=
 \sum_{\nu\in{\cal F}} u_\nu I_\Lambda \phi_\nu
=
\sum_{\nu\in\Lambda} u_\nu \phi_\nu
+
\sum_{\nu\notin\Lambda} u_\nu I_{\Lambda}\phi_\nu,
$$
where we have used that
$I_\Lambda \phi_\nu =\phi_\nu$ for every $\nu\in\Lambda$ since $\phi_\nu\in \mathbb{P}_\Lambda$.
For the second sum on the right-hand side, we observe that for each $\nu\notin\Lambda$,
$$
I_{\Lambda}\phi_\nu=\sum_{\widetilde \nu\in \Lambda}\Delta_{\widetilde \nu} \phi_\nu
=\sum_{\widetilde \nu\in \Lambda\cap R_\nu}\Delta_{\widetilde \nu} \phi_\nu=I_{\Lambda\cap R_\nu} \phi_\nu,
$$
since $\Delta_{\widetilde \nu}$ annihilates $\mathbb{P}_\nu$ whenever $\widetilde \nu \not\leq \nu$.
Therefore 
$$
u-I_\Lambda u  
= \sum_{\nu\not\in\Lambda} u_\nu (I- I_{\Lambda\cap R_\nu })\phi_\nu,
$$
where $I$ stands for the identity operator. 
This implies
$$
\|u-I_\Lambda u \|_{L^\infty(U,V)}
\leq
\sum_{\nu\not\in\Lambda} (1+\mathbb{L}_{\Lambda\cap R_\nu})\|u_\nu\|_V 
\leq 
2\sum_{\nu\not\in\Lambda} \mathbb{L}_{\Lambda\cap R_\nu}\|u_\nu\|_V \; .
$$
Since~\eqref{eq:lambdak} or~\eqref{eq:dak} holds, we obtain from Theorem~\ref{thm:theoLeb} that
$$
\mathbb{L}_{\Lambda\cap R_\nu} 
\leq (\#(\Lambda\cap R_\nu))^{\theta+1} 
\leq (\#(R_\nu))^{\theta+1} 
= \pi(\nu),
$$
which yields~\eqref{eq:residual}.
\end{proof}

We can apply the above lemma with the Taylor series~\eqref{eq:taylor} or the renormalized
Legendre series~\eqref{eq:renormlegendre}. This leads us to analyze the $\ell^1$ tail of the
sequence $(c_\nu)_{\nu\in{\cal F}}$ where $c_\nu$ is either $\pi(\nu)\|t_\nu\|_V$ or $\pi(\nu)\|\widetilde w_\nu\|_V$.
If~\eqref{eq:uearho} holds, we know from Theorem~\ref{thm:theosumaff} that this sequence satisfies the bound
$$
\sum_{\nu\in{\cal F}}(\omega_\nu c_\nu)^2 <\infty,
$$
where $\omega_\nu$ is either $\pi(\nu)^{-1}\rho^\nu$ or $\pi(\nu)^{-1}\beta(\nu)^{-2}\rho^\nu$.
Since $\pi(\nu)$ has algebraic growth similar to $\beta(\nu)$, application
of Lemma~\ref{thm:lemmaqm} and of Theorem~\ref{thm:theowlower} with the value $r=1$, leads to the following 
result.

\begin{theorem}
\label{thm:theointer}
If~\eqref{eq:uearho} holds with $(\rho_j^{-1})_{j\geq 1}\in \ell^q(\mathbb{N})$ for some $0<q<\infty$ and $\rho_j>1$ for all $j$, then 
$$
\|u-I_{\Lambda_n}u\|_{L^\infty(U,V)}\leq Cn^{-s}, \quad s:=\frac 1 q-\frac 1 2,
$$
where $\Lambda_n$ is any downward closed set corresponding to the $n$ largest $\widehat \kappa_\nu$ where $\kappa_\nu$ is either $\pi(\nu)b^\nu$ or $\pi(\nu) \beta(\nu)^2 b^\nu$.
\end{theorem}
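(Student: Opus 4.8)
The plan is to chain together Lemma~\ref{thm:lemmaresid}, the weighted $\ell^2$ bound of Theorem~\ref{thm:theosumaff}, the summability Lemma~\ref{thm:lemmaqm}, and Theorem~\ref{thm:theowlower}. Fix $T=(t_k)_{k\geq 0}$ to be the Leja or $\Re$-Leja points, so that~\eqref{eq:lambdak} or~\eqref{eq:dak} holds with some exponent $\theta$ (namely $\theta=1$ for Leja, assuming~\eqref{eq:lineark}, or $\theta=2$ for $\Re$-Leja). Apply Lemma~\ref{thm:lemmaresid} to either the Taylor expansion~\eqref{eq:taylor} or the renormalized Legendre expansion~\eqref{eq:renormlegendre}: under~\eqref{eq:uearho} both converge unconditionally towards $u$ in $L^\infty(U,V)$ --- this is exactly the content of Theorems~\ref{thm:theotaylor} and~\ref{thm:theoleg} in the range $q<2$ --- and $y\mapsto u(y)$ is continuous for the product topology, as is the case for the parametric elliptic solution map. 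This yields
$$
\|u-I_{\Lambda_n}u\|_{L^\infty(U,V)}\leq 2\sum_{\nu\notin\Lambda_n}\pi(\nu)\,\|u_\nu\|_V,\qquad \pi(\nu)=\prod_{j\geq 1}(1+\nu_j)^{\theta+1},
$$
with $u_\nu=t_\nu$ in the Taylor case and $u_\nu=\widetilde w_\nu$ in the renormalized Legendre case, so that it suffices to bound the $\ell^1$ tail of $c_\nu:=\pi(\nu)\|u_\nu\|_V$ after retaining its $n$ largest entries.

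Next I would produce a weight $\omega_\nu$ with $\sum_{\nu\in{\cal F}}(\omega_\nu c_\nu)^2<\infty$. Theorem~\ref{thm:theosumaff} gives $\sum_{\nu}(\rho^\nu\|t_\nu\|_V)^2<\infty$ and $\sum_{\nu}(\beta(\nu)^{-2}\rho^\nu\|\widetilde w_\nu\|_V)^2<\infty$, so $\omega_\nu:=\pi(\nu)^{-1}\rho^\nu$ works in the Taylor case and $\omega_\nu:=\pi(\nu)^{-1}\beta(\nu)^{-2}\rho^\nu$ in the Legendre case. Setting $\kappa_\nu:=\omega_\nu^{-1}$ and $b_j:=\rho_j^{-1}$, these are $\kappa_\nu=\pi(\nu)b^\nu$ and $\kappa_\nu=\pi(\nu)\beta(\nu)^2 b^\nu$, the sequences named in the statement.

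The decisive point is that $\pi(\nu)$ and $\pi(\nu)\beta(\nu)^2$ are algebraic factors of the form covered by Lemma~\ref{thm:lemmaqm}: indeed $\pi(\nu)=\prod_j(1+\nu_j)^{\theta+1}$ is already of the form $\prod_j(1+c_1\nu_j)^{c_2}$, and $\pi(\nu)\beta(\nu)^2=\prod_j(1+\nu_j)^{\theta+1}(1+2\nu_j)\leq\prod_j(1+2\nu_j)^{\theta+2}$ is again of this form after adjusting constants. Since $\rho_j>1$ for all $j$ gives $b_j<1$, while $b\in\ell^q(\mathbb{N})$ forces $b_j\to 0$ and hence $\|b\|_{\ell^\infty}<1$, Lemma~\ref{thm:lemmaqm} delivers $(\kappa_\nu)_{\nu\in{\cal F}}\in\ell^q_m({\cal F})$. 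Applying Theorem~\ref{thm:theowlower} with $r=1$ --- the legitimate choice precisely when $\frac 1 q>\frac 1 r-\frac 1 2$, i.e. $q<2$, equivalently $s=\frac 1 q-\frac 1 2>0$ --- produces a nested family $(\Lambda_n)_{n\geq 1}$ of downward closed sets with $\#(\Lambda_n)=n$, consisting of the indices of the $n$ largest $\widehat\kappa_\nu$, such that $\sum_{\nu\notin\Lambda_n}c_\nu\leq Cn^{-s}$ with $s=\frac 1 q-\frac 1 2$. Inserting this into the residual bound above gives the claim.

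The steps requiring care are the verification of the hypotheses of Lemma~\ref{thm:lemmaresid} (unconditional $L^\infty(U,V)$ convergence and product-topology continuity of $u$) for the two expansions used, and the treatment of the compound algebraic factor $\pi(\nu)\beta(\nu)^2$ so that Lemma~\ref{thm:lemmaqm} applies and yields membership in $\ell^q_m({\cal F})$ rather than only $\ell^q({\cal F})$, which is what is needed since $\kappa_\nu$ itself is not monotone nonincreasing. Beyond this, the argument is a direct assembly of the cited results, with no genuine obstacle.
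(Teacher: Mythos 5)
Your proposal is correct and follows exactly the paper's route: Lemma~\ref{thm:lemmaresid} reduces the interpolation error to the $\ell^1$ tail of $c_\nu=\pi(\nu)\|u_\nu\|_V$, Theorem~\ref{thm:theosumaff} supplies the weighted $\ell^2$ bound with $\omega_\nu=\pi(\nu)^{-1}\rho^\nu$ or $\pi(\nu)^{-1}\beta(\nu)^{-2}\rho^\nu$, Lemma~\ref{thm:lemmaqm} gives $(\kappa_\nu)\in\ell^q_m({\cal F})$, and Theorem~\ref{thm:theowlower} with $r=1$ yields the rate $s=\frac1q-\frac12$. Your explicit domination $\pi(\nu)\beta(\nu)^2\leq\prod_j(1+2\nu_j)^{\theta+2}$ and your remarks on the hypotheses of Lemma~\ref{thm:lemmaresid} are in fact slightly more careful than the paper's own one-line justification of these points.
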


\section{Discrete Least Squares Approximations}
\label{sec:4}

\subsection{Discrete Least Squares on $V$-valued Linear Spaces}
\label{sec:4.1}

Least-squares fitting is an alternative approach to interpolation for building a polynomial approximation of $u$ from $\mathbb{V}_\Lambda$.
In this approach we are given $m$ observations $u^1,\ldots,u^m$ of $u$ at points $y^1,\ldots,y^m \in U \subseteq \mathbb{R}^d$ where $m\geq n=\#(\Lambda)$.

We first discuss the least-squares method in the more general setting of $V$-valued linear spaces, 
$$
\mathbb{V}_n:=V\otimes \mathbb{Y}_n,
$$
where $\mathbb{Y}_n$ is the space of real-valued functions defined everywhere 
on $U$ such that $\dim(\mathbb{Y}_n)=n$.  In the next section, we discuss more
specifically the case where $\mathbb{Y}_n=\mathbb{P}_\Lambda$. Here we study the approximation error in the $L^2(U,V,d\mu)$ norm for some given probability measure $d\mu$, when the evaluation points $y^i$ are independent and drawn according to this probability measure. For notational simplicity we use the shorthand
$$
\|\cdot\|:=\|\cdot \|_{L^2(U,V,d\mu)}.
$$

The {\it least-squares} method selects 
the approximant of $u$ in the space $\mathbb{V}_n$ as
$$
u_L:=\argmin_{\widetilde u \in \mathbb{V}_n} \frac 1 m \sum_{i=1}^m  \| \widetilde u(y^i)-u^i \|_V^2.
$$
In the noiseless case where $u^i:=u(y^i)$ for any $i=1,\ldots,m$, this also writes
\begin{equation}
u_L=\argmin_{\widetilde u \in  \mathbb{V}_\Lambda }\|u-\widetilde u\|_m,
\label{eq:dls}
\end{equation}
where the discrete seminorm is defined by 
$$
\| f \|_m:= \left( \frac 1 m \sum_{i=1}^m   \|f(y^i) \|_V^2 \right)^{1/2}.
$$
Note that $\|f\|_m^2$ is an unbiased estimator of $\|f\|^2$ since we have
$$
\mathbb{E}(\|f\|_m^2)=\|f\|^2.
$$
Let $\{\phi_1,\dots,\phi_n\}$ denote an arbitrary $L^2(U,d\mu)$ orthonormal basis  of the space $\mathbb{Y}_n$. 
If we expand the solution to~\eqref{eq:dls} as $\sum_{j=1}^n c_j \phi_j$, with $c_j\in V$, the 
$V$-valued vector ${\bf c}=(c_1,\dots,c_n)^t$ is the solution to the normal equations
\begin{equation}
{\bf G} {\bf c}={\bf d},
\label{eq:sys}
\end{equation}
where the matrix ${\bf G}$ has entries
$$
{\bf G}_{j,k}=\frac 1 m\sum_{i=1}^m \phi_j(y^i)\phi_k(y^i),
$$
and where
the $V$-valued data vector ${\bf d}=(d_1,\dots,d_n)^t$ is given by 
$$
d_j:=\frac 1 m \sum_{i=1}^m  u^i \phi_j(y^i).
$$
This linear system always has at least one solution, which is unique when ${\bf G}$ is nonsingular.
When ${\bf G}$ is singular, we may define $u_L$ as the unique minimal $\ell^2(\mathbb{R}^n,V)$ norm solution to~\eqref{eq:sys}.

In the subsequent analysis, we sometimes work under the assumption of a known uniform bound 
\begin{equation}
\| u \|_{L^\infty(U,V)}\leq \tau.
\label{eq:unibound}
\end{equation}
We introduce the truncation operator
$$
z \mapsto T_{\tau} (z):=
\left\{
\begin{array}{ll}
z,  
&  
\ 
\textrm{ if }  \ \| z\|_V \leq \tau, 
\\
\frac{z}{\|z \|_V},
&
\ 
\textrm{ if }  \ \| z\|_V > \tau,
\end{array}
\right.
$$
and notice that it is a contraction: $\| T_{\tau}(z)-T_{\tau}(\widetilde z)\|_V \leq \|z -\widetilde z\|_V $ for any $z,\widetilde z \in V$. 
The {\it truncated  least-squares approximation} is defined by 
$$
u_T:=T_{\tau}\circ u_L.
$$
Note that, in view of~\eqref{eq:unibound}, we have $\| u(y) - u_T(y) \|_V\leq \| u(y) -u_L(y) \|_V$ for any $y\in U$ and therefore
$$
\|u - u_T\|\leq \|u -u_L\|.
$$
Note that the random matrix ${\bf G}$ concentrates toward its expectation which is the identity matrix ${\bf I}$ as $m\to \infty$. In other words, the probability that ${\bf G}$ is ill-conditioned becomes very small as $m$ increases.  
The truncation operator aims at avoiding instabilities which may occur when ${\bf G}$ is ill-conditioned. As an alternative proposed in \cite{CM2016}, we may define for some given $A>1$ the {\it conditioned least-squares approximation} by
$$
u_C:=u_L, \ {\rm if} \ {\rm cond}({\bf G}) \leq A, \quad u_C:=0, \ {\rm otherwise},
$$
where ${\rm cond}({\bf G}):={\lambda_{\max}({\bf G})}/{\lambda_{\min}({\bf G})}$ is the usual condition number.

The property that $\|{\bf G}-{\bf I}\|_2 \leq \delta$ for some $0<\delta<1$ amounts to the norm equivalence
$$
(1-\delta)\|f \|^2\leq \|f\|_m^2\leq (1+\delta)\|f\|^2, \quad  f\in \mathbb{V}_n.
$$
It is well known that if $m\geq n$ is too much close to $n$, least-squares methods may become unstable and inaccurate for most sampling distributions. For example, if $U=[-1,1]$ and $\mathbb{Y}_n= \mathbb{P}_{n-1}$ is the space of algebraic polynomials of degree $n-1$, then with $m=n$ the estimator coincides with the Lagrange polynomial interpolation which can be highly unstable and inaccurate, in particular for equispaced points. 
Therefore, $m$ should be sufficiently large compared to $n$ for the probability that ${\bf G}$ is ill-conditioned to be small. This trade-off between $m$ and $n$ has been analyzed in \cite{CDL}, using the function
$$
y \mapsto k_n(y):=\sum_{j=1}^n |\phi_j(y)|^2,
$$
which is the diagonal of the integral kernel of the $L^2(U,d\mu)$ projector on $\mathbb{Y}_n$.
This function depends on $d\mu$, but not on the chosen orthonormal basis. It is strictly positive in $U$ under minimal assumptions on the orthonormal basis, for example if one element of the basis is the constant function over all $U$. 
Obviously, the function $k_n$ satisfies
$$
\int_{U} k_n \, d\mu=n.
$$
We define
$$
K_n:=\|k_n \|_{L^\infty(U)} \geq n.
$$
The following results for the least-squares method with noiseless evaluations were obtained in \cite{CDL,MNST,CCMNT,CM2016} for real-valued functions, however their proof extends in a straightforward manner to the present setting of $V$-valued functions. 
They are based on a probabilistic bound for the event $\|{\bf G}-{\bf I}\|_2 > \delta$ using the particular value $\delta=\frac 1 2$, or equivalently the value $A=\frac{1+\delta}{1-\delta}=3$ as a bound on the condition number of ${\bf G}$. 

\begin{theorem}
\label{thm:theo1}
For any $r>0$, if $m$ and $n$ satisfy 
\begin{equation}
K_n \leq \kappa\frac m {\ln m},\;\; {\rm with}\;\; \kappa:=\kappa(r)=\frac{1-\ln 2} {2+2r},
\label{eq:condm}
\end{equation}
then the following hold.
\begin{enumerate}
\item[{\rm (i)}] The matrix ${\bf G}$ satisfies the tail bound
$$
{\rm Pr} \, \left\{ \|{\bf G}-{\bf I}\|_2 > \frac 1 2 \right\} \leq
2m^{-r}. 
$$
\item[{\rm (ii)}] If $u$ satisfies~\eqref{eq:unibound}, then
the truncated least-squares estimator satisfies, in the noiseless case,
$$
\mathbb{E}(\|u-u_T\|^2)\leq (1+\zeta(m))\inf_{\widetilde u\in \mathbb{V}_n}\|u-\widetilde u\|^2+8\tau^2m^{-r},
$$
where $\zeta(m):= \frac {4\kappa} {\ln (m)}\to 0$ as $m\to +\infty$, and $\kappa$ is as in~\eqref{eq:condm}.
\item[{\rm (iii)}] The conditioned least-squares estimator satisfies, in the noiseless case,
$$
\mathbb{E}(\|u-u_C\|^2)\leq (1+\zeta(m))\inf_{\widetilde u\in \mathbb{V}_n}\|u-\widetilde u\|^2+2 \| u \|^2 m^{-r},
$$
where $\zeta(m)$ is as in (ii).
\item[{\rm (iv)}] If $u$ satisfies~\eqref{eq:unibound}, then the estimator $u_E\in \{u_L,u_T,u_C\}$ satisfies, in the noiseless case,
\begin{equation}
\|u- u_E\| \leq (1+\sqrt 2) \inf_{\widetilde u\in \mathbb{V}_n}\|u-\widetilde u\|_{L^\infty(U,V)},
\label{eq:nearoptimalprob}
\end{equation}
with probability larger than $1-2m^{-r}$.
\end{enumerate}
\end{theorem}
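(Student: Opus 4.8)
The plan is to establish part (i) first and then read off (ii)--(iv) from the single favorable event $\Omega^+:=\{\|\mathbf{G}-\mathbf{I}\|_2\leq \tfrac12\}$. For (i) I would write $\mathbf{G}=\frac1m\sum_{i=1}^m\mathbf{X}_i$ where $\mathbf{X}_i:=(\phi_j(y^i)\phi_k(y^i))_{j,k}$ is a rank-one positive semidefinite matrix with operator norm exactly $k_n(y^i)\leq K_n$ and $\mathbb{E}(\mathbf{X}_i)=\mathbf{I}$, and apply a matrix Chernoff inequality to this sum of i.i.d.\ bounded positive semidefinite matrices. This yields $\Pr\{\lambda_{\min}(\mathbf{G})<\tfrac12\}\leq n\exp(-c\,m/K_n)$ with $c=\tfrac12+\tfrac12\ln\tfrac12=\tfrac{1-\ln2}2$, plus an analogous bound for the event $\lambda_{\max}(\mathbf{G})>\tfrac32$; the hypothesis $K_n\leq \kappa\,m/\ln m$ with $\kappa=\kappa(r)=\tfrac{1-\ln2}{2+2r}$ is calibrated precisely so that $\exp(-c\,m/K_n)\leq m^{-(1+r)}$, and since $n\leq K_n\leq m$ the tail is $\leq m^{-r}$, so a union bound gives $\Pr((\Omega^+)^c)\leq 2m^{-r}$. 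On $\Omega^+$ one then has the norm equivalence $\tfrac12\|f\|^2\leq\|f\|_m^2\leq\tfrac32\|f\|^2$ for $f\in\mathbb{V}_n$.

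For (iv) the argument is deterministic on $\Omega^+$. The key point is that, in the noiseless case, $u_L$ is the $\langle\cdot,\cdot\rangle_m$-orthogonal projection of $u$ onto $\mathbb{V}_n$ (well defined there since $\lambda_{\min}(\mathbf{G})>0$), so for any $\widetilde u\in\mathbb{V}_n$ the discrete Pythagorean identity gives $\|u_L-\widetilde u\|_m^2\leq\|u-\widetilde u\|_m^2\leq\|u-\widetilde u\|_{L^\infty(U,V)}^2$, whence $\|u_L-\widetilde u\|\leq\sqrt2\,\|u_L-\widetilde u\|_m\leq\sqrt2\,\|u-\widetilde u\|_{L^\infty(U,V)}$ by the lower norm equivalence. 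Together with $\|u-\widetilde u\|\leq\|u-\widetilde u\|_{L^\infty(U,V)}$ (as $d\mu$ is a probability measure) and the triangle inequality, infimizing over $\widetilde u\in\mathbb{V}_n$ gives $\|u-u_L\|\leq(1+\sqrt2)\inf_{\widetilde u\in\mathbb{V}_n}\|u-\widetilde u\|_{L^\infty(U,V)}$; since $T_\tau$ is a pointwise contraction fixing $u$ we get $\|u-u_T\|\leq\|u-u_L\|$, and $u_C=u_L$ on $\Omega^+$ because $\mathrm{cond}(\mathbf{G})\leq 3$ there. This proves (iv) on $\Omega^+$, an event of probability $\geq1-2m^{-r}$.

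For (ii) and (iii) I would split $\mathbb{E}(\|u-u_E\|^2)$ over $\Omega^+$ and its complement. On $\Omega^+$ the $L^2$-orthogonality $u-P_nu\perp\mathbb{V}_n$, where $P_n$ is the $L^2(U,V,d\mu)$-projection, gives $\|u-u_L\|^2=\|u-P_nu\|^2+\|P_nu-u_L\|^2$, the first summand being $\inf_{\widetilde u\in\mathbb{V}_n}\|u-\widetilde u\|^2$. For the second summand, the discrete normal equations give $\|P_nu-u_L\|_m^2=|\langle u-P_nu,P_nu-u_L\rangle_m|$, and expanding $P_nu-u_L=\sum_j c_j\phi_j$ and applying Cauchy--Schwarz in $\ell^2(V)$ together with the lower norm equivalence bounds $\|P_nu-u_L\|^2$ by $4\|\xi\|_{\ell^2(V)}^2$, where $\xi_j:=\frac1m\sum_i\phi_j(y^i)(u-P_nu)(y^i)$ has zero mean; a variance computation using independence of the $y^i$ and $\sum_j|\phi_j|^2=k_n\leq K_n$ gives $\mathbb{E}\|\xi\|_{\ell^2(V)}^2\leq\frac{K_n}m\|u-P_nu\|^2$, hence $\mathbb{E}(\|P_nu-u_L\|^2\mathbf{1}_{\Omega^+})\leq\frac{4K_n}m\inf_{\widetilde u\in\mathbb{V}_n}\|u-\widetilde u\|^2\leq\zeta(m)\inf_{\widetilde u\in\mathbb{V}_n}\|u-\widetilde u\|^2$ with $\zeta(m)=\tfrac{4\kappa}{\ln m}$. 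On the complement, for $u_T$ one uses $\|u-u_T\|_{L^\infty(U,V)}\leq 2\tau$ (so $\|u-u_T\|^2\leq4\tau^2$) times $\Pr((\Omega^+)^c)\leq2m^{-r}$ for the additive $8\tau^2m^{-r}$, while for $u_C$, which either vanishes or stays stably bounded there, the contribution is $\leq\|u\|^2\Pr((\Omega^+)^c)\leq2\|u\|^2m^{-r}$; for (iv) nothing need be claimed on $(\Omega^+)^c$ since that case is absorbed into the $2m^{-r}$ exceptional probability.

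The main obstacle is part (i): pinning down the explicit constant $\kappa(r)$ requires the sharp exponent $\tfrac12+\tfrac12\ln\tfrac12=\tfrac{1-\ln2}2$ in the matrix Chernoff bound together with careful bookkeeping of the factors $n$ and $K_n$ and the $\ln m$ in the hypothesis. Everything downstream is comparatively soft; the only genuine subtlety is obtaining the vanishing multiplicative loss $1+\zeta(m)$ rather than a fixed constant in (ii)--(iii), which rests on the mean-zero variance estimate for $\xi$ and on the trick of bounding $\mathbf{1}_{\Omega^+}$ crudely by $1$ only after the favorable event has already been used to control $\|P_nu-u_L\|$.
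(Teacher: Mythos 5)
Your reconstruction follows exactly the route that the paper itself points to: the paper gives no proof of Theorem 4.1 but defers to \cite{CDL,MNST,CCMNT,CM2016} and states that the argument rests on a tail bound for $\|\mathbf{G}-\mathbf{I}\|_2>\delta$ at $\delta=\frac12$; your matrix Chernoff step for (i), the norm equivalence and discrete-projection quasi-optimality for (iv), and the split of the expectation over $\Omega^+$ and its complement with the variance estimate for $\xi$ in (ii)--(iii) are precisely the arguments of those references, including the correct constants $1+\sqrt2$ and $\zeta(m)=4\kappa/\ln m$.

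One quantitative point deserves flagging in part (i). The lower-tail matrix Chernoff bound indeed carries the exponent $\frac12+\frac12\ln\frac12=\frac{1-\ln 2}{2}\approx 0.153$, but the ``analogous bound'' for $\lambda_{\max}(\mathbf{G})>\frac32$ carries the strictly worse exponent $\frac32\ln\frac32-\frac12\approx 0.108$; the two tails are not symmetric. Hence the calibration $K_n\leq\kappa\,m/\ln m$ with $\kappa=\frac{1-\ln 2}{2+2r}$ forces only the lower-tail term below $m^{-(1+r)}$, and the union bound as you wrote it does not deliver $2m^{-r}$ for the two-sided event. This wrinkle is inherited from the cited literature; the standard repairs are either to define $\kappa$ with the smaller constant $\frac32\ln\frac32-\frac12$, or to observe that only the one-sided event $\lambda_{\min}(\mathbf{G})\geq\frac12$ is actually used in (ii)--(iv), so that the substance of the theorem survives. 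A second, smaller gap of the same flavor occurs in (iii): the bad set for $u_C$ is $\{\mathrm{cond}(\mathbf{G})>3\}$, which is contained in $(\Omega^+)^c$ but does not exhaust it, so on $(\Omega^+)^c\cap\{\mathrm{cond}(\mathbf{G})\leq 3\}$ one has $u_C=u_L$ without the norm equivalence available; your phrase ``stays stably bounded there'' is asserting, not proving, control on that sliver, and a careful write-up must either handle it separately or define $u_C$ directly in terms of the event $\|\mathbf{G}-\mathbf{I}\|_2\leq\frac12$. Everything else, in particular the mean-zero variance computation giving $\mathbb{E}\|\xi\|_{\ell^2(V)}^2\leq\frac{K_n}{m}\|u-P_nu\|^2$ and the resulting $(1+\zeta(m))$ factor, is correct and is exactly how the sharp multiplicative constant is obtained in the sources.
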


In the case of noisy evaluations modeled by~\eqref{eq:noisy}--\eqref{eq:noisebound}, the observations are given by
\begin{equation}
u^i=u(y^i)+\eta(y^i). 
\label{eq:noisy_observations}
\end{equation}
The following result from \cite{CCMNT} shows that~\eqref{eq:nearoptimalprob} holds up to this additional perturbation.

\begin{theorem}
\label{thm:theo4}
For any $r>0$, if $m$ and $n$ satisfy condition~\eqref{eq:condm} and $u$ satisfies~\eqref{eq:unibound}, 
then the estimator $u_E\in \{u_L,u_T,u_C\}$ in the noisy case~\eqref{eq:noisy_observations} satisfies
$$
\|u- u_E\|\leq (1+\sqrt 2)
\inf_{\widetilde u \in \mathbb{V}_n}\|u-\widetilde u\|_{L^\infty(U,V)}
+ \sqrt 2 \varepsilon,
$$
with probability larger than $1-2n^{-r}$, where $\varepsilon$ is the noise level in~\eqref{eq:noisebound}.
\end{theorem}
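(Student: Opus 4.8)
The plan is to run the argument behind Theorem~\ref{thm:theo1}(iv) while carrying the additive noise along as a separate term. First I would condition on the favorable event $\Omega:=\{\|\mathbf{G}-\mathbf{I}\|_2\leq\tfrac12\}$: by part~(i) of Theorem~\ref{thm:theo1}, under the sampling condition~\eqref{eq:condm} this event has probability at least $1-2m^{-r}$, which is at least $1-2n^{-r}$ since $m\geq n$ and $r>0$. On $\Omega$ one has the norm equivalence $\tfrac12\|f\|^2\leq\|f\|_m^2\leq\tfrac32\|f\|^2$ for every $f\in\mathbb{V}_n$; in particular ${\rm cond}(\mathbf{G})\leq 3=A$, so $u_C=u_L$ on $\Omega$, and since $u$ satisfies~\eqref{eq:unibound} the contraction property of the truncation operator gives $\|u-u_T\|\leq\|u-u_L\|$ unconditionally. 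Hence it suffices to prove the asserted bound for $u_E=u_L$ on the event $\Omega$.

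Next I would fix an arbitrary $\widetilde u\in\mathbb{V}_n$ and split $\|u-u_L\|\leq\|u-\widetilde u\|+\|\widetilde u-u_L\|$. The second term lies in $\mathbb{V}_n$, so on $\Omega$ it is at most $\sqrt2\,\|\widetilde u-u_L\|_m$ by the norm equivalence. In the noisy model~\eqref{eq:noisy_observations} the discrete normal equations say that the sample vector of $u_L$ is the $\ell^2$-orthogonal projection $P_m$ of the data $(u(y^i)+\eta(y^i))_i$ onto the sample space $\{(w(y^i))_i:w\in\mathbb{V}_n\}$; since $P_m$ fixes $\widetilde u$ and is a contraction for $\|\cdot\|_m$, this yields $\|\widetilde u-u_L\|_m=\|P_m(\widetilde u-u-\eta)\|_m\leq\|u-\widetilde u\|_m+\|\eta\|_m$, and the noise term is controlled deterministically by~\eqref{eq:noisebound} via $\|\eta\|_m=\bigl(\tfrac1m\sum_{i=1}^m\|\eta(y^i)\|_V^2\bigr)^{1/2}\leq\|\eta\|_{L^\infty(U,V)}\leq\varepsilon$. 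Finally, because $\|\cdot\|_m^2$ is an average of pointwise squared $V$-norms one has $\|u-\widetilde u\|_m\leq\|u-\widetilde u\|_{L^\infty(U,V)}$ for every realization of the points, and $\|u-\widetilde u\|\leq\|u-\widetilde u\|_{L^\infty(U,V)}$ since $d\mu$ is a probability measure; combining the three estimates and infimizing over $\widetilde u\in\mathbb{V}_n$ gives, on $\Omega$, $\|u-u_L\|\leq(1+\sqrt2)\inf_{\widetilde u\in\mathbb{V}_n}\|u-\widetilde u\|_{L^\infty(U,V)}+\sqrt2\,\varepsilon$, which is the claim.

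I expect this to be essentially bookkeeping once Theorem~\ref{thm:theo1} is available: no new probabilistic estimate is needed, and the event $\Omega$ together with its probability is inherited verbatim. The one point that requires a little care is keeping the deterministic noise contribution cleanly separated from the random sampling error, so that the probability stays exactly $1-2n^{-r}$ and the noise enters only through the clean factor $\sqrt2\,\varepsilon$ rather than getting entangled with the best-approximation term. A minor check is that the projection identity for $u_L$ and the equality $u_C=u_L$ are invoked only on $\Omega$, where $\mathbf{G}$ is invertible, so the minimal-norm convention used to define $u_L$ off $\Omega$ plays no role in the bound.
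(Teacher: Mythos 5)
Your proof is correct: the paper itself omits the argument and simply cites \cite{CCMNT}, and your reasoning reproduces the standard proof given there. Conditioning on the event $\|{\bf G}-{\bf I}\|_2\leq \tfrac12$, using the resulting norm equivalence together with the contraction/projection properties of $u_L$, $u_T$, $u_C$, and separating the deterministic noise term via $\|\eta\|_m\leq\varepsilon$ is exactly the intended route, and all the individual steps (including the reduction of $1-2m^{-r}$ to $1-2n^{-r}$ via $m\geq n$) check out.
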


Similar results, with more general assumptions on the type of noise, are proven in \cite{CDL,MNT2015,CM2016}. 

\subsection{Downward Closed Polynomial Spaces and Weighted Least Squares}
\label{sec:4.2}
Condition~\eqref{eq:condm} shows that $K_n$ gives indications on the number $m$ of observations required to ensure stability and accuracy of the least-squares approximation. In order to understand how demanding this condition is with respect to $m$, it is important to have sharp upper bounds for $K_n$. Such bounds have been proven when the measure $d\mu$ on $U=[-1,1]^d$ has the form 
\begin{equation}
\label{eq:jacobi_measure}
d\mu= C \bigotimes_{j=1}^d  (1-y_j)^{\theta_1}(1+y_j)^{\theta_2}  dy_j,  
\end{equation}
where $\theta_1,\theta_2>-1$ are real shape parameters and $C$ is a normalization constant such that $\int_U d\mu=1$. 
Sometimes~\eqref{eq:jacobi_measure} is called the Jacobi measure, because the Jacobi polynomials are orthonormal in $L^2(U,d\mu)$. 
Remarkable instances of the measure~\eqref{eq:jacobi_measure} are the uniform measure, when $\theta_1=\theta_2=0$, and the Chebyshev measure, when $\theta_1=\theta_2=-\frac12$.  

When $\mathbb{Y}_n=\mathbb{P}_\Lambda$ is a multivariate polynomial space and $\Lambda$ is a downward closed multi-index set with $\#(\Lambda)=n$, it is proven in \cite{CCMNT,M2015} that $K_n$ satisfies an upper bound which only depends on $n$ and on the choice of the measure~\eqref{eq:jacobi_measure} through the values of $\theta_1$ and $\theta_2$.  

\begin{lemma}
\label{thm:bounds_K_lemma}
Let $d\mu$ be the measure defined in~\eqref{eq:jacobi_measure}. 
Then it holds
\begin{equation}
\label{eq:up_bounds_cheby_jacobi}
K_n \leq 
\left\{
\begin{array}{ll}
n^{\frac{ \ln 3 }{\ln 2}},
&
\quad 
\textrm{ if } \  \theta_1=\theta_2=-\frac12, 
\\
n^{2\max\{\theta_1,\theta_2\}+2},
&
\quad \textrm{ if } \  \theta_1,\theta_2\in\mathbb{N}_0.
\end{array}
\right.
\end{equation}
\end{lemma}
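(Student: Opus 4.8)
The plan is to compute $K_n$ exactly at a corner of $U$ and then to reduce the bound to a purely combinatorial inequality on downward closed sets. Write $J_k$ for the univariate $L^2([-1,1],d\mu_1)$-orthonormal Jacobi polynomial and $L_\nu=\prod_{j}J_{\nu_j}$ for the corresponding tensorized basis, so that (since $\Lambda$ is downward closed) $\{L_\nu:\nu\in\Lambda\}$ is an orthonormal basis of $\mathbb{P}_\Lambda$ and $k_n=\sum_{\nu\in\Lambda}|L_\nu|^2$. Since $\theta_1,\theta_2\geq-\tfrac12$, each univariate factor $|J_k|$ attains its maximum over $[-1,1]$ at an endpoint --- at $+1$ when $\theta_1\geq\theta_2$, at $-1$ otherwise --- and this endpoint does not depend on $k$. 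Hence every $|L_\nu|$ is maximized over $U$ at one and the same corner $c=(\pm1,\dots,\pm1)$, and therefore
$$
K_n=k_n(c)=\sum_{\nu\in\Lambda}\prod_{j}w_{\nu_j},\qquad w_k:=J_k(\pm1)^2 .
$$
In the Chebyshev case $w_0=1$ and $w_k=2$ for $k\geq1$, so $\sum_{k=0}^{K} w_k=2K+1\leq(K+1)^{\ln 3/\ln 2}$ for every integer $K\geq0$, with equality at $K=0$ and $K=1$ (a one-variable check). In the Jacobi case with $\theta:=\max\{\theta_1,\theta_2\}\in\mathbb{N}_0$ one has, for the uniform measure, the identity $\sum_{k=0}^{K}(2k+1)=(K+1)^2$, and more generally the sharp partial-sum bound $\sum_{k=0}^{K} w_k\leq(K+1)^{2\theta+2}$, which is the univariate estimate recorded in \cite{CCMNT,M2015}.

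With these preliminaries, both cases of~\eqref{eq:up_bounds_cheby_jacobi} follow from the combinatorial statement: \emph{if $(w_k)_{k\geq0}$ is nonnegative and $\sum_{k=0}^{K} w_k\leq(K+1)^s$ for all $K\geq0$ with some $s\geq1$, then $\sum_{\nu\in\Lambda}\prod_j w_{\nu_j}\leq(\#(\Lambda))^s$ for every finite downward closed $\Lambda$.} I would prove this by induction on the number $d$ of coordinates on which $\Lambda$ depends. For $d\leq1$ the claim is immediate, since then $\Lambda=\{0,\dots,n-1\}$ and $\sum_{k=0}^{n-1} w_k\leq n^s$. For the inductive step, slice $\Lambda$ along its last active coordinate: setting $\Lambda_l:=\{\bar\nu:(\bar\nu,l)\in\Lambda\}$, these are downward closed, nested $\Lambda_0\supseteq\Lambda_1\supseteq\cdots$, with $n_l:=\#(\Lambda_l)$ satisfying $n_0\geq n_1\geq\cdots$ and $\sum_{l} n_l=n$. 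Writing $S(\cdot)$ for the quantity to be bounded, one gets $S(\Lambda)=\sum_{l\geq0} w_l\,S(\Lambda_l)\leq\sum_{l\geq0} w_l\,n_l^{\,s}$ by the induction hypothesis, so it remains to prove
$$
\sum_{l\geq0} w_l\,n_l^{\,s}\leq\Bigl(\sum_{l\geq0} n_l\Bigr)^{\!s}\qquad\text{whenever } n_0\geq n_1\geq\cdots\geq0 .
$$

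The key observation is that, for $s\geq1$, the map $(n_l)_{l}\mapsto\sum_{l} w_l\,n_l^{\,s}$ is convex on the polytope $P_N:=\{(n_l): n_0\geq n_1\geq\cdots\geq0,\ \sum_l n_l=N\}$ (as a nonnegative combination of the convex functions $t\mapsto t^s$), so its maximum over $P_N$ is attained at a vertex; the vertices of $P_N$ are exactly the points $n_0=\cdots=n_j=N/(j+1)$, $n_{j+1}=\cdots=0$, for $j\geq0$. At such a vertex the value equals $(N/(j+1))^s\sum_{l=0}^{j} w_l\leq(N/(j+1))^s(j+1)^s=N^s$ by the partial-sum hypothesis, which closes the induction and proves the lemma. (When $s=2$, as for the uniform measure, the convexity argument can be bypassed: the inequality reduces to $\sum_{l'<l} n_l(n_l-n_{l'})\leq0$, which holds term by term because the $n_l$ are nonincreasing.) Feeding in $s=\ln 3/\ln 2$ in the Chebyshev case and $s=2\max\{\theta_1,\theta_2\}+2$ in the integer Jacobi case yields~\eqref{eq:up_bounds_cheby_jacobi}.

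The main obstacle is not the combinatorial lemma --- whose mechanism already appears in the proof of Theorem~\ref{thm:theoLeb} --- but the univariate input: first, that $\|k_n\|_{L^\infty(U)}$ is attained at a corner of $U$, which relies on the endpoint extremality of univariate Jacobi polynomials and is valid precisely because $\theta_1,\theta_2\geq-\tfrac12$; and second, the \emph{sharp} one-dimensional partial-sum estimates $2K+1\leq(K+1)^{\ln 3/\ln 2}$ and $\sum_{k=0}^{K}J_k(\pm1)^2\leq(K+1)^{2\theta+2}$. The latter, for general integer $\theta_1,\theta_2$, requires the closed form of $J_k(\pm1)^2$ together with Gamma-function estimates and is the technical core carried out in \cite{CCMNT,M2015}. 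Both exponents are sharp: $\ln 3/\ln 2$ is attained on the hypercubes $\Lambda=\{0,1\}^d$ (where $K_n=3^d=n^{\ln 3/\ln 2}$), and $2\max\{\theta_1,\theta_2\}+2$ on the rectangles $\Lambda=R_\mu$.
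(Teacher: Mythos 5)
Your proof is correct, and it is worth noting that the paper itself gives no proof of this lemma --- it is stated with a citation to \cite{CCMNT,M2015} --- so the relevant comparison is with the argument in those references, which your proposal essentially reconstructs: reduce $K_n$ to the value of $k_n$ at a fixed corner of $U$ via the endpoint extremality of Jacobi polynomials (valid since $\max\{\theta_1,\theta_2\}\geq -\tfrac12$), then bound $\sum_{\nu\in\Lambda}\prod_j w_{\nu_j}$ by induction on the number of active coordinates, slicing $\Lambda$ along the last one into nested downward closed sections. Your way of closing the induction step --- observing that $(n_l)\mapsto\sum_l w_l n_l^s$ is convex on the polytope of nonincreasing nonnegative sequences with fixed sum, so that it suffices to check the staircase vertices $n_0=\cdots=n_j=N/(j+1)$, where the univariate partial-sum bound $\sum_{l=0}^j w_l\leq(j+1)^s$ gives exactly $N^s$ --- is a clean and correct packaging of the combinatorial core, and it makes transparent why the bound is sharp on rectangles. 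The only ingredient you do not prove is the univariate partial-sum estimate $\sum_{k=0}^K J_k(\pm1)^2\leq(K+1)^{2\max\{\theta_1,\theta_2\}+2}$ for general integer parameters (you verify it for Chebyshev and for the uniform measure); you flag this honestly and it is indeed the Gamma-function computation carried out in \cite{M2015}, so I do not count it as a gap in a proof of a lemma the paper itself only cites.
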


A remarkable property of both algebraic upper bounds in~\eqref{eq:up_bounds_cheby_jacobi} is that the exponent of $n$ is independent of the dimension $d$, and of the shape of the downward closed set $\Lambda$. 
Both upper bounds are sharp in the sense that equality holds for multi-index sets of rectangular type $\Lambda=R_\nu$ corresponding to tensor product polynomial spaces.

As an immediate consequence of Theorem~\ref{thm:theo1} and Lemma~\ref{thm:bounds_K_lemma}, we have the next corollary. 
\begin{corollary}
\label{thm:corollary_std_ls}
For any $r>0$, with multivariate polynomial spaces $\mathbb{P}_\Lambda$ and $\Lambda$ downward closed, if $m$ and $n$ satisfy 
\begin{equation}
\label{eq:bounds_corollary_std_ls}
\dfrac{m}{\ln m} \geq \kappa
\left\{
\begin{array}{ll}
n^{\frac{\ln 3}{\ln 2}},
&
\quad \textrm{ if } \ \theta_1=\theta_2=-\frac12,
\\
n^{2\max\{\theta_1,\theta_2 \} +2},
& \quad \textrm{ if } \ \theta_1,\theta_2\in\mathbb{N}_0,
\end{array}
\right.
\end{equation}
with $\kappa=\kappa(r)$ as in~\eqref{eq:condm}, then the same conclusions of Theorem~\ref{thm:theo1} hold true. 
\end{corollary}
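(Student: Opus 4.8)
The plan is to verify that the hypothesis of the corollary, namely~\eqref{eq:bounds_corollary_std_ls}, implies the key sampling condition~\eqref{eq:condm} of Theorem~\ref{thm:theo1} with the same constant $\kappa=\kappa(r)$, after which all four conclusions (i)--(iv) follow verbatim with $\mathbb{V}_n=V\otimes\mathbb{P}_\Lambda$. Since $\mathbb{Y}_n=\mathbb{P}_\Lambda$ with $\Lambda$ downward closed and $\#(\Lambda)=n$, Lemma~\ref{thm:bounds_K_lemma} furnishes an explicit algebraic bound for $K_n=\|k_n\|_{L^\infty(U)}$ in terms of $n$ alone, with an exponent that depends only on the Jacobi shape parameters $\theta_1,\theta_2$ and neither on $d$ nor on the shape of $\Lambda$. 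The right-hand side of~\eqref{eq:bounds_corollary_std_ls} is precisely $\kappa^{-1}$ times this bound, so~\eqref{eq:bounds_corollary_std_ls} says exactly that the upper bound of Lemma~\ref{thm:bounds_K_lemma} is at most $\kappa\, m/\ln m$; combined with $K_n$ being no larger than that upper bound, this yields $K_n\leq \kappa\, m/\ln m$, which is~\eqref{eq:condm}.

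Concretely I would split into the two cases of~\eqref{eq:up_bounds_cheby_jacobi}. In the Chebyshev case $\theta_1=\theta_2=-\tfrac12$, invoke the first line of~\eqref{eq:up_bounds_cheby_jacobi} to get $K_n\leq n^{\ln 3/\ln 2}$ and combine with the first line of~\eqref{eq:bounds_corollary_std_ls}, which gives $m/\ln m\geq \kappa\, n^{\ln 3/\ln 2}\geq \kappa\, K_n$, i.e.~\eqref{eq:condm}. In the case $\theta_1,\theta_2\in\mathbb{N}_0$, use the second line of~\eqref{eq:up_bounds_cheby_jacobi}, $K_n\leq n^{2\max\{\theta_1,\theta_2\}+2}$, together with the second line of~\eqref{eq:bounds_corollary_std_ls} to obtain again $m/\ln m\geq \kappa\, K_n$. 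In both cases Theorem~\ref{thm:theo1} applies directly and delivers the tail bound $\mathrm{Pr}\{\|\mathbf{G}-\mathbf{I}\|_2>\tfrac12\}\leq 2m^{-r}$, the mean-square error estimates for $u_T$ and $u_C$, and the near-optimality bound~\eqref{eq:nearoptimalprob}, with the very same constants and the same $\zeta(m)=4\kappa/\ln m$.

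There is essentially no obstacle: the corollary is a pure specialization obtained by inserting the a priori bound of Lemma~\ref{thm:bounds_K_lemma} into the abstract condition of Theorem~\ref{thm:theo1}. The only point deserving a line of care is to state explicitly that the exponent appearing in~\eqref{eq:bounds_corollary_std_ls} is governed solely by the measure through $\theta_1,\theta_2$ and is independent of $d$ and of the geometry of $\Lambda$ — this independence being exactly what is transferred from the lemma — and, if desired, to remark that the sharpness of the $K_n$ bounds for rectangular sets $\Lambda=R_\nu$ shows these exponents cannot be lowered in general.
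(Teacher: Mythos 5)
Your proposal is correct and is exactly the argument the paper intends: the paper states the corollary as an immediate consequence of Theorem~\ref{thm:theo1} and Lemma~\ref{thm:bounds_K_lemma}, obtained by inserting the bound \eqref{eq:up_bounds_cheby_jacobi} on $K_n$ into condition \eqref{eq:condm}. Nothing further is needed.
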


Other types of results on the accuracy of least squares have been recently established in \cite{CMN}, under conditions of the same type as~\eqref{eq:bounds_corollary_std_ls}.

In some situations, for example when $n$ is very large, the conditions~\eqref{eq:bounds_corollary_std_ls} might require a prohibitive number of observations $m$. It is therefore a legitimate question to ask whether there exist alternative approaches with less demanding conditions than~\eqref{eq:bounds_corollary_std_ls} between $m$ and $n$. At best, we would like that $m$ is of order only slightly larger than $n$, for example by a logarithmic factor. In addition, the above analysis does not apply to situations where the basis functions $\phi_k$ are unbounded, such as when using Hermite polynomials in the expansion~\eqref{eq:hermite}. It is thus desirable to ask for the development of approaches that also cover this case.

These questions have an affirmative answer by considering {\it weighted least-squares methods}, as proposed in \cite{DH,JNZ,CM2016}. In the following, we survey some results from \cite{CM2016}. For the space $\mathbb{V}_n=V\otimes \mathbb{Y}_n$, the weighted least-squares approximation is defined as  
$$
u_W:=\argmin_{\widetilde u \in \mathbb{V}_n} \frac 1 m \sum_{i=1}^m w^i \| \widetilde u(y^i)-u^i \|_V^2,
$$
for some given choice of weights $w^i \geq 0$. This estimator is again computed by solving a linear system of normal equations now with the matrix  ${\bf G}$ with entries
$$
{\bf G}_{j,k}=\frac 1 m\sum_{i=1}^m w(y^i) \phi_j(y^i)\phi_k(y^i).
$$
Of particular interest to us are weights of the form
$$
w^i=w(y^i),
$$
where $w$ is some nonnegative function defined on $U$ such that 
\begin{equation}
\int_{U} w^{-1} \, d\mu=1.
\label{eq:cond_weight_w_ls}
\end{equation}  
We then denote by $d\sigma$ the probability measure 
\begin{equation}
d\sigma:= w^{-1} d\mu,
\label{eq:aux_measure_w_ls}
\end{equation}
and we draw the independent points $y^1,\ldots,y^m$ from $d\sigma$. The case $w \equiv 1$ and $d\sigma=d\mu$ corresponds to the previously discussed standard (unweighted) least-squares estimator $u_L$. As previously done for $u_L$, we associate to $u_W$ a truncated estimator $u_T$ and a conditioned estimator $u_C$, by replacing $u_L$ with $u_W$ in the corresponding definitions. 

Let us introduce the function
$$
y \mapsto k_{n,w}(y):=\sum_{j=1}^n w(y) |\phi_j(y)|^2,
$$
where once again $\{\phi_1,\dots,\phi_n\}$ is an arbitrary $L^2(U,d\mu)$ orthonormal basis of the space $\mathbb{Y}_n$. 
Likewise, we define
$$
K_{n,w}:=\|k_{n,w} \|_{L^\infty(U)}.
$$
The following result, established in \cite{CM2016} for real-valued functions, extends Theorem~\ref{thm:theo1} to this setting.
Its proof in the $V$-valued setting is exactly similar.

\begin{theorem}
\label{thm:theo10}
For any $r>0$, if $m$ and $n$ satisfy 
$$
\dfrac{m}{\ln m} \geq  \kappa \, K_{n,w},\;\; {\rm with}\;\; \kappa:=\kappa(r)=\frac{1-\ln 2}{2+2r},
$$
then the same conclusions of Theorem~\ref{thm:theo1} hold true with $u_L$ replaced by $u_W$. 
\end{theorem}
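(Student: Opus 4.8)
The plan is to recognize the weighted least-squares problem as a \emph{standard} least-squares problem for the space $\mathbb{V}_n$ with respect to the auxiliary sampling measure $d\sigma=w^{-1}d\mu$, and then to re-run the proof of Theorem~\ref{thm:theo1} with the quantity $K_n$ everywhere replaced by $K_{n,w}$. Fix an $L^2(U,d\mu)$-orthonormal basis $\{\phi_1,\dots,\phi_n\}$ of $\mathbb{Y}_n$ and write $\|\cdot\|:=\|\cdot\|_{L^2(U,V,d\mu)}$. The normal-equations matrix of $u_W$ is ${\bf G}=\frac1m\sum_{i=1}^m X^i$, where $X^i$ is the rank-one positive semidefinite matrix with entries $X^i_{j,k}=w(y^i)\phi_j(y^i)\phi_k(y^i)$, and the points $y^i$ are drawn independently from $d\sigma$. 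Since $\int_U w^{-1}\,d\mu=1$, we get $\mathbb{E}(X^i_{j,k})=\int_U w\,\phi_j\phi_k\,d\sigma=\int_U\phi_j\phi_k\,d\mu=\delta_{j,k}$, hence $\mathbb{E}({\bf G})={\bf I}$; and $\|X^i\|_2=w(y^i)\sum_{j=1}^n|\phi_j(y^i)|^2=k_{n,w}(y^i)\leq K_{n,w}$ almost surely. These are precisely the two inputs to the matrix Chernoff bound used in the proof of Theorem~\ref{thm:theo1}(i); applying it with $K_{n,w}$ in place of $K_n$ under the hypothesis $m/\ln m\geq\kappa K_{n,w}$ yields ${\rm Pr}\{\|{\bf G}-{\bf I}\|_2>\frac12\}\leq 2m^{-r}$, which is conclusion (i) for $u_W$.

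For parts (ii)--(iv) I would next record that on the event $\{\|{\bf G}-{\bf I}\|_2\leq\frac12\}$ one has the norm equivalence $\frac12\|f\|^2\leq\frac1m\sum_{i=1}^m w(y^i)\|f(y^i)\|_V^2\leq\frac32\|f\|^2$ for all $f\in\mathbb{V}_n$ (obtained, as in the unweighted case, by writing $f=\sum_j c_j\phi_j$ and identifying this weighted discrete seminorm with the quadratic form of ${\bf G}$), and that the weighted discrete seminorm is an unbiased estimator of $\|f\|^2$. The proofs of parts (ii), (iii) and (iv) of Theorem~\ref{thm:theo1} (and of the noisy variant, Theorem~\ref{thm:theo4}) use only this tail bound and norm equivalence, the contraction property of the truncation $T_\tau$ together with the bound $\|u\|_{L^\infty(U,V)}\leq\tau$, and the orthogonality relation $\mathbb{E}\bigl(\frac1m\sum_i w(y^i)(u-P_nu)(y^i)\phi_j(y^i)\bigr)=\int_U(u-P_nu)\phi_j\,d\mu=0$, where $P_n$ is the $L^2(U,d\mu)$-orthogonal projection onto $\mathbb{Y}_n$. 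Each of these ingredients survives the reweighting unchanged, so the arguments --- the splitting into $\{\|{\bf G}-{\bf I}\|_2\leq\frac12\}$ and its complement, with the complement contributing $8\tau^2m^{-r}$ in (ii), $2\|u\|^2m^{-r}$ in (iii), and probability at most $2m^{-r}$ in (iv) --- go through verbatim, the $V$-valued bookkeeping being handled exactly as in the real-valued proof.

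The step I expect to require the most care is verifying that the error genuinely remains controlled in the \emph{fixed} norm $L^2(U,V,d\mu)$, equivalently that the bias--variance decomposition of $\|u-u_W\|$ is transparent to the reweighting. The point is the defining identity $w\,d\sigma=d\mu$: every expectation appearing in that decomposition is an integral against $d\sigma$ carrying the factor $w$, which therefore reproduces the corresponding integral against $d\mu$; in particular the orthogonality above holds, the best-approximation term $\inf_{\widetilde u\in\mathbb{V}_n}\|u-\widetilde u\|$ is literally the same quantity as in Theorem~\ref{thm:theo1}, and the truncated estimator $u_T=T_\tau\circ u_W$ still satisfies $\|u-u_T\|\leq\|u-u_W\|$ pointwise by the contraction property. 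Once this correspondence is checked, no further estimates are needed and the conclusion follows.
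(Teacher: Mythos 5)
Your proposal is correct and follows exactly the argument the paper intends: the paper itself gives no proof of this theorem beyond citing \cite{CM2016} and noting that the $V$-valued case is identical, and the route you reconstruct --- observing that $w\,d\sigma=d\mu$ makes $\mathbb{E}({\bf G})={\bf I}$ and that the rank-one summands are almost surely bounded in spectral norm by $K_{n,w}$, then rerunning the matrix Chernoff tail bound and the bias--variance bookkeeping of Theorem~\ref{thm:theo1} with $K_n$ replaced by $K_{n,w}$ --- is precisely the proof in that reference. Your closing check that the error norm, the best-approximation term, and the orthogonality relations all remain those of the fixed measure $d\mu$ is the right point to insist on, and nothing further is needed.
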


If we now choose  
\begin{equation}
w(y)=\dfrac{ n  }{ \sum_{j=1}^n |\phi_j(y)|^2  }, 
\label{eq:opt_weights_w_ls}
\end{equation}
that satisfies condition~\eqref{eq:cond_weight_w_ls} 
by construction, 
then 
the measure defined in~\eqref{eq:aux_measure_w_ls} takes the form 
\begin{equation}
d\sigma= \dfrac{ \sum_{j=1}^n |\phi_j(y)|^2  }{ n  } d\mu. 
\label{eq:opt_measure_w_ls}
\end{equation}
The choice~\eqref{eq:opt_weights_w_ls} also gives 
$$
K_{n,w}=\|k_{n,w} \|_{L^\infty(U)}=n, 
$$
and leads to the next result, as a consequence of the previous theorem. 

\begin{theorem}
\label{thm:theo11}
For any $r>0$, if $m$ and $n$ satisfy 
\begin{equation}
\dfrac{m}{\ln m} \geq  \kappa \ n, \;\; {\rm with}\;\; \kappa:=\kappa(r)=\frac{1-\ln 2}{2+2r},
\label{eq:condm_w_opt}
\end{equation}
then the same conclusions of Theorem~\ref{thm:theo1} hold true with $u_L$ replaced by $u_W$, with $w$ given by~\eqref{eq:opt_weights_w_ls} and the weights taken as $w^i=w(y^i)$.
\end{theorem}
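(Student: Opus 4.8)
The plan is to obtain Theorem~\ref{thm:theo11} as an immediate specialization of Theorem~\ref{thm:theo10} to the weight $w$ prescribed by~\eqref{eq:opt_weights_w_ls}. The whole argument reduces to two elementary verifications followed by one substitution into an already-established result.

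First I would check that $w$ is admissible, i.e. that~\eqref{eq:cond_weight_w_ls} holds, so that $d\sigma=w^{-1}d\mu$ from~\eqref{eq:aux_measure_w_ls} is a genuine probability measure and the sampling step in Theorem~\ref{thm:theo10} is meaningful. Since $\{\phi_1,\dots,\phi_n\}$ is an $L^2(U,d\mu)$ orthonormal system, one has $\int_U \sum_{j=1}^n |\phi_j|^2\,d\mu=\sum_{j=1}^n\|\phi_j\|_{L^2(U,d\mu)}^2=n$, hence
\begin{equation*}
\int_U w^{-1}\,d\mu=\frac 1 n\int_U \sum_{j=1}^n |\phi_j(y)|^2\,d\mu(y)=1 .
\end{equation*}
At this stage one should also observe that $w$ is well defined and bounded: the denominator $\sum_{j=1}^n|\phi_j(y)|^2=k_n(y)$ is strictly positive on $U$ under the minimal assumptions on the basis recalled above (for instance when one basis element is the constant function), so $w\in L^\infty(U)$ and the weighted normal equations for $u_W$ are well posed.

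Second I would compute $K_{n,w}$ for this choice of weight. Directly from the definition of $k_{n,w}$,
\begin{equation*}
k_{n,w}(y)=w(y)\sum_{j=1}^n|\phi_j(y)|^2=\frac{n}{\sum_{j=1}^n|\phi_j(y)|^2}\sum_{j=1}^n|\phi_j(y)|^2=n
\end{equation*}
for every $y\in U$, so that $K_{n,w}=\|k_{n,w}\|_{L^\infty(U)}=n$.

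Finally, inserting $K_{n,w}=n$ into the hypothesis $\frac{m}{\ln m}\geq \kappa\,K_{n,w}$ of Theorem~\ref{thm:theo10} turns it into exactly the assumption~\eqref{eq:condm_w_opt} of the present statement, with the very same constant $\kappa=\kappa(r)=\frac{1-\ln 2}{2+2r}$. Theorem~\ref{thm:theo10} then yields verbatim all four conclusions (i)--(iv) of Theorem~\ref{thm:theo1}, with $u_L$ replaced by $u_W$ and with the companion truncated and conditioned estimators built from $u_W$. The only point calling for any care --- and it is minor --- is the positivity and boundedness of $w$ noted in the first step; once that is in place nothing remains but the substitution, so there is no genuine obstacle here, the weight~\eqref{eq:opt_weights_w_ls} being precisely the one that flattens $k_{n,w}$ to the constant $n$.
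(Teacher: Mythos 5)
Your proposal is correct and follows exactly the paper's route: verify the normalization~\eqref{eq:cond_weight_w_ls} (which holds by construction of~\eqref{eq:opt_weights_w_ls} since the $\phi_j$ are $L^2(U,d\mu)$-orthonormal), observe that $k_{n,w}\equiv n$ so that $K_{n,w}=n$, and then specialize Theorem~\ref{thm:theo10}. The additional remark on the strict positivity of $k_n$ is a sensible touch already covered by the paper's standing assumptions on the basis.
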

 
The above theorem ensures stability and accuracy of the weighted least-squares approximation, under the minimal condition that $m$ is linearly proportional to $n$, up to a logarithmic factor.  
Clearly this is an advantage of weighted least squares compared to standard least squares, since condition~\eqref{eq:condm} is more demanding than~\eqref{eq:condm_w_opt} in terms of the number of observations $m$. 

However, this advantage comes with some drawbacks that we now briefly recall, see \cite{CM2016} for an extensive description. 
In general~\eqref{eq:aux_measure_w_ls} and~\eqref{eq:opt_measure_w_ls} are not product measures, even if $d\mu$ is one.
Therefore, the first drawback of using weighted least squares concerns the efficient generation of independent samples from multivariate probability measures,  
whose computational cost could be prohibitively expensive, above all when the dimension $d$ is large.  
In some specific settings, for example downward closed polynomial spaces $\mathbb{Y}_n=\mathbb{P}_\Lambda$ with $\#(\Lambda)=n$, and when $d\mu$ is a product measure, this drawback can be overcome. 
We refer to  \cite{CM2016}, where efficient sampling algorithms have been proposed and analyzed. For any $m$ and any downward closed set $\Lambda$, these algorithms generate $m$ independent samples with proven bounds on the required computational cost. 
The dependence on the dimension $d$ and $m$ of these bounds is linear. 
For the general measure~\eqref{eq:aux_measure_w_ls} the efficient generation of the sample is a nontrivial task, and remains a drawback of such an approach. 

The second drawback concerns the use of weighted least squares in a hierarchical context, where we are given a nested sequence $\Lambda_1\subset \ldots \subset \Lambda_n$ of downward closed sets, instead of a single such set $\Lambda$. 
Since the measure~\eqref{eq:opt_measure_w_ls} depends on $n$, the sets $(\Lambda_n)_{n\geq 1}$ are associated to different measures $(d\sigma_n)_{n\geq 1}$. Hence, recycling  samples from the previous iterations of the adaptive algorithm is not as straighforward as in the case of standard least squares. 

As a final remark, let us stress that the above results of Theorem~\ref{thm:theo10} and Theorem~\ref{thm:theo11} hold for general approximation spaces $\mathbb{Y}_n$ other than polynomials.

\section{Adaptive Algorithms and Extensions}
\label{sec:5}

\subsection{Selection of Downward Closed Polynomial Spaces}
\label{sec:5.1}

The interpolation and least-squares methods discussed in Section~\ref{sec:3} and Section~\ref{sec:4} allow us to construct polynomial approximations in $\mathbb{V}_\Lambda=V \otimes \mathbb{P}_\Lambda$ of the map~\eqref{eq:solmap} from its pointwise evaluations, for some given downward closed set $\Lambda$. For these methods, we have given several convergence results in terms of error estimates either in $L^\infty(U,V)$ or $L^2(U,V,d\mu)$. In some cases, these estimates compare favorably with the error of best approximation $\min_{\widetilde u \in \mathbb{V}_\Lambda} \|u-\widetilde u\|$ measured in such norms.

A central issue which still needs to be addressed is the choice of the downward closed set $\Lambda$, so that this error of best approximation is well behaved, for a given map $u$. Ideally, for each given $n$, we would like to use the set 
$$
\Lambda_n=\argmin_{\Lambda\in {\cal D}_n}\min_{\widetilde u\in \mathbb{V}_{\Lambda}} \|u-\widetilde u\|,
$$
where ${\cal D}_n$ is the family of all downward closed sets $\Lambda$ of cardinality $n$. However such sets $\Lambda_n$ are not explicitely given to us, and in addition the resulting sequence $(\Lambda_n)_{n\geq 1}$ is generally not nested.

Concrete selection strategies aim to produce ``suboptimal yet good'' nested sequences $(\Lambda_n)_{n\geq 1}$ different from the above. Here, an important distinction should be made between {\it nonadaptive} and {\it adaptive} selection strategies.

In nonadaptive strategies, the selection of $\Lambda_n$ is made in an a-priori manner, based on some available information on the given problem.
The results from Section~\ref{sec:2.3} show that, for relevant instances of solution maps associated to parametric PDEs, there exist nested
sequences $(\Lambda_n)_{n\geq 1}$ of downward closed sets such that $\#(\Lambda_n)=n$ and $\min_{\widetilde u\in \mathbb{V}_{\Lambda_n}} \|u-\widetilde u\|$ decreases with a given convergence rate $n^{-s}$ as $n\to \infty$. In addition, these results provide constructive strategies for building the sets $\Lambda_n$, 
since these sets are defined as the indices associated to the $n$ largest $\widehat \kappa_\nu:=\max_{\widetilde \nu\geq \nu} \kappa_{\widetilde \nu}$ like in Theorem~\ref{thm:theoleg}, or directly to the $n$ largest $\kappa_\nu$ like in Theorem~\ref{thm:theotaylor} and Theorem~\ref{thm:theoherm}, and since the $\kappa_{\nu}$ are explicitely given numbers.

In the case where we build the polynomial approximation by interpolation, Theorem~\ref{thm:theointer}
shows that a good choice of $\Lambda_n$ is produced by taking $\kappa_\nu$ to be either $\pi(\nu)b^\nu$ or $\pi(\nu) \beta(\nu)^2 b^\nu$ where $b=(\rho_j^{-1})_{j\geq 1}$ is such that~\eqref{eq:uearho} holds. In the case where we build the polynomial approximation by least-squares methods, the various results from Section~\ref{sec:4} show that under suitable assumptions, the error is nearly as good as that of best approximation in $L^2(U,V, d\mu)$ with respect to the relevant probability measure.
In the affine case, Theorem~\ref{thm:theoleg} shows that a good choice of $\Lambda_n$ is produced by taking $\kappa_\nu$ to be $b^\nu\beta(\nu)$ where $b=(\rho_j^{-1})_{j\geq 1}$ is such that~\eqref{eq:uearho} holds. In the lognormal case Theorem~\ref{thm:theoherm} shows that a good choice of $\Lambda_n$ is produced by taking $\kappa_\nu$ to be given by~\eqref{eq:bnuherm} where $b=(\rho_j^{-1})_{j\geq 1}$ is such that~\eqref{eq:psicr1} holds.

Let us briefly discuss the complexity of identifying the downward closed set $\Lambda_n$ associated to the $n$ largest $\widehat \kappa_\nu$.  For this purpose, we introduce for any downward closed set $\Lambda$ its set of {\it neighbors} defined by
$$
N(\Lambda):=\{\nu\in {\cal F}\setminus \Lambda \mbox{ such that } \Lambda\cup\{\nu\} \mbox { is downward closed} \}.
$$
We may in principle define $\Lambda_n=\{\nu^1,\dots,\nu^n\}$ by the following induction.
\begin{itemize}
\item
Take $\nu^1=0_{\cal F}$ as the null multi-index.
\item
Given $\Lambda_k=\{\nu^1,\dots,\nu^k\}$, choose a $\nu^{k+1}$ maximizing $\widehat \kappa_\nu$ over $\nu\in  N(\Lambda_k)$.
\end{itemize}
In the finite-dimensional case $d<\infty$, we observe that $N(\Lambda_k)$ is contained in the union of $N(\Lambda_{k-1})$ with the set consisting of the indices
$$
 \nu^k+e_j, \quad j=1,\dots,d,
$$
where $e_j$ is the Kroenecker sequence with $1$ at position $j$. As a consequence, since the values of the $\widehat \kappa_\nu$ have already been computed for $\nu\in N(\Lambda_{k-1})$, the step $k$ of the induction requires at most $d$ evaluations of $\widehat \kappa_\nu$, and therefore the overall computation of $\Lambda_n$ requires at most $nd$ evaluations.

In the infinite-dimensional case $d=\infty$, the above procedure cannot be practically implemented, since the set of neighbors has infinite cardinality. This difficulty can be circumvented by introducing a priority order among the variables, as done in the next definitions.

\begin{definition}
\label{thm:defanchoredseq}
A monotone nonincreasing positive sequence $(c_\nu)_{\nu\in{\cal F}}$ is said to be {\em anchored} if and only if
$$
l\leq j \implies c_{e_j} \leq c_{e_l}.
$$
A finite downward closed set $\Lambda$ is said to be {\em anchored} if and only if
$$
e_j\in \Lambda \quad {\rm and}\quad l\leq j \quad \implies \quad e_l\in \Lambda,
$$
where $e_l$ and $e_j$ are the Kroenecker sequences with $1$ at position $l$ and $j$, respectively.
\end{definition}

Obviously, if $(c_\nu)_{\nu\in{\cal F}}$ is anchored, one of the sets $\Lambda_n$ corresponding to its $n$ largest values is anchored. 
It is also readily seen that all sequences $(\widehat \kappa_\nu)_{\nu\in{\cal F}}$ that are used in Theorems~\ref{thm:theotaylor}, \ref{thm:theoleg}, \ref{thm:theoherm} or \ref{thm:theointer} for the construction of $\Lambda_n$ are anchored, provided that the sequence $b=(\rho_j^{-1})_{j\geq 1}$
is monotone nonincreasing. This is always the case up to a rearrangement of the variables.
For any anchored set $\Lambda$, we introduce the set of its {\it anchored neighbors} defined by
\begin{equation}
\widetilde N(\Lambda):=\{\nu\in N(\Lambda)\; : \; 
\nu_j=0\;\; {\rm if}\;\; j >j(\Lambda)+1\},
\label{eq:redneighbors}
\end{equation}
where 
$$
j(\Lambda):=\max\{j\; : \; \nu_j>0\; \mbox{for some}\; \nu\in \Lambda\}.
$$
We may thus modify in the following way the above induction procedure.
\begin{itemize}
\item
Take $\nu^1=0_{\cal F}$ as the null multi-index.
\item
Given $\Lambda_k=\{\nu^1,\dots,\nu^k\}$, choose a $\nu^{k+1}$ maximizing
$\widehat \kappa_\nu$ over $\nu\in \widetilde N(\Lambda_k)$.
\end{itemize}
This procedure is now feasible in infinite dimension. At each step $k$ the number of active variables is limited by $j(\Lambda_{k}) \leq k-1$, and the total number of evaluations of $\widehat \kappa_\nu$ needed to construct $\Lambda_n$ does not exceed $1+2+\dots +(n-1)\leq n^2/2$.

In adaptive strategies the sets $\Lambda_n$ are not a-priori selected, but instead they are built in a recursive way, based on earlier computations. For instance, one uses the previous set $\Lambda_{n-1}$ and the computed polynomial approximation $u_{\Lambda_{n-1}}$
to construct $\Lambda_n$.  If we impose that the sets $\Lambda_n$ are nested,
this means that we should select an index $\nu^n\notin \Lambda_{n-1}$ such that
$$
\Lambda_n:=\Lambda_{n-1}\cup\{\nu^n\}.
$$
The choice of the new index $\nu^n$ is further limited to $N(\Lambda_{n-1})$ if we impose that the constructed sets $\Lambda_n$ 
are downward closed, or to $\widetilde N(\Lambda_{n-1})$ if we impose that these sets are anchored.

Adaptive methods are known to sometimes perform significantly better
than their nonadaptive counterpart. In the present context,
this is due to the fact that the a-priori choices of $\Lambda_n$
based on the sequences $\kappa_\nu$ may fail to be optimal.
In particular, the guaranteed rate $n^{-s}$ based on such choices
could be pessimistic, and better rates could be obtained by other choices.
However, convergence analysis of adaptive methods is usually
more delicate. We next give examples of possible adaptive strategies in the
interpolation and least-squares frameworks.

\subsection{Adaptive Selection for Interpolation}
\label{sec:5.2}

We first consider polynomial approximations obtained by interpolation
as discussed in Section~\ref{sec:3}. The hierarchical form 
\begin{equation}
I_\Lambda u= \sum_{\nu\in \Lambda}\alpha_\nu B_\nu,
\label{eq:hierexp}
\end{equation}
may formally be viewed as a truncation of the 
expansion of $u$ in the hierarchical basis
$$
\sum_{\nu\in {\cal F}}\alpha_\nu B_\nu,
$$
which however may not always be converging, in contrast to 
the series discussed in Section~\ref{sec:2}.
Nevertheless, we could in principle take the same view, and 
use for $\Lambda_n$ the set of indices corresponding to the $n$ largest terms of~\eqref{eq:hierexp} measured in some given metric 
$L^p(U,V,d\mu)$. This amounts in choosing the indices of the
$n$ largest $w_\nu \|\alpha_\nu\|_V$, where 
the weight $w_\nu$ is given by
$$
w_\nu:=\|B_\nu\|_{L^p(U,d\mu)}.
$$
This weight is easily computable when $d\mu$ is a tensor product measure,
such as the uniform measure. In the case where $p=\infty$ and if we use the Leja sequence,
we know that $\|B_\nu\|_{L^\infty(U)}=1$ and therefore this amounts to choosing
the largest $\|\alpha_\nu\|_V$.

This selection strategy is not practially feasible
since we cannot afford this exhaustive search over ${\cal F}$.
However, it naturally suggests the following adaptive greedy
algorithm, which has been proposed in \cite{CCS1}.
\begin{itemize}
\item
Initialize $\Lambda_1:=\{0_{{\cal F}}\}$ with the null multi-index.
\item
Assuming that $\Lambda_{n-1}$ has been selected and that
the $(\alpha_\nu)_{\nu\in \Lambda_{n-1}}$ have been computed,
compute the $\alpha_\nu$ for $\nu\in N(\Lambda_{n-1})$. 
\item  
Set
\begin{equation}
\nu^n:=\argmax \{ w_\nu\|\alpha_\nu\|_V\; : \; \nu\in N(\Lambda_{n-1})\}.
\label{eq:greedy}
\end{equation}
\item
Define $\Lambda_{n}:=\Lambda_{n-1} \cup \{\nu^n\}$.
\end{itemize}

In the case where $p=\infty$ and if we use the Leja sequence, this strategy 
amounts in picking the index $\nu^n$ 
that maximizes the interpolation error $\|u(y_{\nu}) - I_{\Lambda_{n-1}}u(y_{\nu})\|_V$
among all $\nu$ in $N(\Lambda_{n-1})$. By the same considerations as previously discussed for the a-priori
selection of $\Lambda_n$, we find that in the finite-dimensional case, 
the above greedy algorithm requires at most $dn$ evaluation after $n$ steps. 
When working with infinitely many variables $(y_j)_{j\geq 1}$, we replace the infinite set $N(\Lambda_n)$ in the algorithm by the finite 
set of anchored neighbors $\widetilde N(\Lambda_n)$ defined by~\eqref{eq:redneighbors}. 
Running $n$ steps of the resulting greedy algorithm requires at most $n^2/2$ evaluations.

\begin{remark}
A very similar algorithm has been proposed in \cite{GG}
in the different context of adaptive quadratures, that is,
for approximating the integral of $u$ over the
domain $U$ rather than $u$ itself. In that case, the natural choice
is to pick the new index $\nu^n$ that maximizes
$|\int_U \Delta_\nu u \,  d\mu|$ over $N(\Lambda_n)$ or $\widetilde N(\Lambda_n)$.
\end{remark}

The main defect of the above greedy algorithm is that it may fail to converge, even
if there exist sequences $(\Lambda_n)_{n\geq 1}$ such that
$I_{\Lambda_n} u$ converges toward $u$. Indeed, if $\Delta_\nu u=0$ for a certain $\nu$, 
then no index $\widetilde \nu\geq \nu$ will
ever be selected by the algorithm. As an example,
if $u$ is of the form
$$
u(y)=u_1(y_1)u_2(y_2),
$$
where $u_1$ and $u_2$ are nonpolynomial smooth functions 
such that $u_2(t_0)=u_2(t_1)$, then the algorithm could
select sets $\Lambda_n$ with indices $\nu=(k,0)$ for $k=0,\dots,n-1$, since the
interpolation error at the point  $(t_k,t_1)$ vanishes. 

One way to avoid this problem is to adopt a more conservative selection
rule which ensures that all of ${\cal F}$ is explored, by alternatively using the rule~\eqref{eq:greedy}, or picking
the multi-index $\nu\in\widetilde N(\Lambda_n)$ which has appeared at the earliest stage
in the neighbors of the previous sets $\Lambda_k$. This is summarized by the following algorithm.

\begin{itemize}
\item
Initialize $\Lambda_1:=\{0_{\cal F}\}$ with the null multi-index.
\item
Assuming that $\Lambda_{n-1}$ has been selected and that
the $(\alpha_\nu)_{\nu\in \Lambda_{n-1}}$ have been computed,
compute the $\alpha_\nu$ for $\nu\in \widetilde N(\Lambda_{n-1})$. 
\item
If $n$ is even, set
\begin{equation}
\nu^n:=\argmax \{ w_\nu \|\alpha_\nu\|_V\; : \; \nu\in \widetilde N(\Lambda_{n-1})\}.
\label{eq:greedy_even}
\end{equation}
\item
If $n$ is odd, set
$$
\nu^{n}:={\rm argmin} \{k(\nu)\; : \; \nu\in  \widetilde N(\Lambda_{n-1})\}, \quad k(\nu):=\min\{k\; : \; \nu\in \widetilde N(\Lambda_k)\}.
$$
\item
Define $\Lambda_{n}:=\Lambda_{n-1} \cup \{\nu^n\}$.
\end{itemize}

Even with such modifications, the convergence of the interpolation error produced by this algorithm is not generally guaranteed. Understanding which additional assumptions on $u$ ensure convergence at some given rate, for a given univariate sequence $T$ such as Leja points, is an open problem.

\begin{remark}
Another variant to the above algorithms consists in choosing at the iteration $k$ more
than one new index at a time within  $N(\Lambda_{k-1})$ or $ \widetilde N(\Lambda_{k-1})$.
In this case, we have $n_k:=\#(\Lambda_k)\geq k$. For example we may choose the
smallest subset of indices that retains a fixed portion of the quantity
$\sum_{\nu \in \Lambda_{k-1}} w_\nu\|\alpha_\nu\|_V$. This type of modification 
turns out to be particularly relevant in the least-squares setting discussed in the next section.
\end{remark}

\subsection{Adaptive Selection for Least Squares}
\label{sec:5.3}

In this section we describe adaptive selections in polynomial spaces, for the least-squares methods that have been discussed in Section~\ref{sec:4}. 
We focus on adaptive selection algorithms based on the standard (unweighted) least-squares method. 

As a preliminary observation, it turns out that the most efficient available algorithms for adaptive selection of multi-indices might require the selection of more than one index at a time. Therefore, we adopt the notation that $n_k:=\#(\Lambda_k)\geq k$, where the index $k$ denotes the iteration in the adaptive algorithm. 

As discussed in Section~\ref{sec:4}, stability and accuracy of the least-squares approximation is ensured under suitable conditions between the number of samples and the dimension of the approximation space, see \emph{e.g.}~condition~\eqref{eq:bounds_corollary_std_ls}. 
Hence, in the development of reliable iterative algorithms,  
such conditions need to be satisfied at each iteration. When $d\mu$ is the measure~\eqref{eq:jacobi_measure} with shape parameters $\theta_1,\theta_2$, condition~\eqref{eq:bounds_corollary_std_ls} takes the form of 
\begin{equation}
\label{eq:condm_adaptive_ls}
\dfrac{m_k}{\ln m_k} \geq \kappa \
 n_k^{s}, 
\end{equation}
where $m_k$ denotes the number of samples at iteration $k$, 
and 
$$
s=
\left\{
\begin{array}{ll}
\ln 3 /\ln 2, 
&  
\quad
\textrm{ if } \ \theta_1=\theta_2=-\frac12, 
\\
{2\max\{\theta_1,\theta_2 \} +2}, 
&
\quad 
\textrm{ if } \ \theta_1,\theta_2\in\mathbb{N}_0. 
\end{array}
\right.
$$
Since $n_k$ increases with $k$, the minimal number of samples $m_k$ 
that satisfies~\eqref{eq:condm_adaptive_ls} 
has to increase as well at each iteration. 
At this point, many different strategies can be envisaged for progressively increasing  $m_k$ such that~\eqref{eq:condm_adaptive_ls} remains satisfied at each iteration $k$. 
For example, one can double the number of samples by choosing $m_{k}=2 m_{k-1}$ whenever \eqref{eq:condm_adaptive_ls} is broken, and keep $m_k=m_{k-1}$ otherwise. 
The sole prescription for applying Corollary~\ref{thm:corollary_std_ls} is that 
the samples are independent and drawn from $d\mu$. 
Since all the samples at all iterations are drawn from the same measure $d\mu$, at the $k$th iteration, 
where $m_k$ samples are needed, it is possible to use $m_{k-1}$ samples from the previous iterations, thus generating only $m_{k}-m_{k-1}$ new samples.

We may now present a first adaptive algorithm based on standard least squares.  
\begin{itemize}
\item 
Initialize $\Lambda_1:=\{0_{\cal F}\}$ with the null multi-index.
\item 
Assuming that $\Lambda_{k-1}$ has been selected, 
compute the least-squares approximation 
$$u_L=\sum_{\nu \in \Lambda_{k-1} \cup N(\Lambda_{k-1}) } c_\nu \phi_\nu $$
of $u$ in $\mathbb{V}_{\Lambda_{k-1}\cup N(\Lambda_{k-1})}$, 
using a number of samples $m_k$ that satisfies condition~\eqref{eq:condm_adaptive_ls} with $n_k=  
\#(\Lambda_{k-1}\cup N(\Lambda_{k-1}))$. 
\item Set
\begin{equation}
\nu^k:=\argmax_{ \nu\in N(\Lambda_{k-1})} | c_\nu |^2.
\label{eq:greedy_ls}
\end{equation}
\item 
Define $\Lambda_{k}:=\Lambda_{k-1} \cup \{\nu^k\}$.
\end{itemize}

Similarly to the previously discussed interpolation algorithms,
in the case of infinitely many variables $(y_j)_{j\geq 1}$ the set $N(\Lambda_k)$ is infinite and should be replaced by the finite set of anchored neighbors $\widetilde N(\Lambda_k)$
defined by~\eqref{eq:redneighbors}.
As for interpolation, we may define a more conservative 
version of this algorithm in order to ensure that 
all of ${\cal F}$ is explored. For example, when $k$ is even, we define 
$\nu^{k}$ according to~\eqref{eq:greedy_ls}, and when $k$ is odd we pick
for $\nu^k$ the multi-index $\nu\in\widetilde N(\Lambda_k)$ which has appeared at the earliest stage
in the neighbors of the previous sets $\Lambda_k$. 
The resulting algorithm is very similar to the one presented for interpolation, with obvious modifications due to the use of least squares. 

As announced at the beginning, it can be advantageous to select more than one index at a time from $\widetilde{N}(\Lambda_{k-1})$, at each iteration $k$ 
of the adaptive algorithm. 
For describing the multiple selection of indices from $\widetilde{N}(\Lambda_{k-1})$, we introduce the so-called {\it bulk chasing} procedure.   
Given a finite set 
$R \subseteq \widetilde{N}(\Lambda_{k-1})$, 
a nonnegative function ${\cal E}:R\to\mathbb{R}$ and a parameter $\alpha \in (0,1]$, we define the procedure $\textrm{bulk}:= \textrm{bulk}(R,{\cal E},\alpha)$ that  
computes a set 
$F\subseteq R$
of minimal positive cardinality 
such that 
$$
\sum_{\nu \in F} {\cal E}(\nu) \geq \alpha \sum_{\nu \in R} {\cal E}(\nu). 
$$
A possible choice for the function ${\cal E}$  
is 
$$
{\cal E}(\nu)={\cal E}_L(\nu):=|c_\nu|^2, \quad \nu \in R, 
$$ 
where $c_\nu$ is given from an available least-squares estimator 
$$
u_L=\sum_{\nu \in \Lambda} c_\nu \phi_\nu, 
$$
that has been already computed on any downward closed set $R \subset \Lambda\subseteq \Lambda_{k-1}\cup \widetilde{N}(\Lambda_{k-1})$. 
Another choice for ${\cal E}$ is  
$$
{\cal E}(\nu)={\cal E}_M( \nu):= 
\langle \phi_{ \nu}, 
u - \widetilde{u}_L
 \rangle_{m_{k-1}}, 
\quad \nu \in R,  
$$
where $\widetilde{u}_L$ is the truncation to $\Lambda_{k-1}$ of a least-squares estimator $u_L= \sum_{ \nu  \in \Lambda} c_{\nu} \phi_{\nu}$ that has been already computed on any downward closed set $\Lambda_{k-1} \subset \Lambda\subseteq  \Lambda_{k-1} \cup \widetilde{N}(\Lambda_{k-1})$, using a number of samples $m_{k-1}$ that satisfies condition~\eqref{eq:condm_adaptive_ls} with $n_k=\#(\Lambda)$. The discrete norm in ${\cal E}_M(\nu)$ uses the same $m_{k-1}$ evaluations of $u$ that have been used to compute the least-squares approximation $u_L$ on $\Lambda$.  

Both ${\cal E}_L(\nu)$ and ${\cal E}_M(\nu)$ should be viewed as estimators of the coefficient $\langle u, \phi_\nu \rangle$. 
The estimator ${\cal E}_M(\nu)$ is of Monte Carlo type and computationally cheap to calculate.  
Combined use of the two estimators leads to the next algorithm for greedy selection with bulk chasing, that has been proposed in \cite{M2013}.

\begin{itemize}
\item 
Initialize $\Lambda_1:=\{0_{\cal F}\}$ with the null multi-index, and choose $\alpha_1,\alpha_2\in(0,1]$. 
\item 
Assuming that $\Lambda_{k-1}$ has been selected, set
\begin{equation}
F_1=\textrm{bulk}(\widetilde{N}(\Lambda_{k-1}),
{\cal E}_M
,\alpha_1 ),
\label{eq:greedy_ls_dorfler1}
\end{equation}
where ${\cal E}_M$ uses the least-squares approximation $u_L=\sum_{\nu \in \Lambda} c_\nu \phi_\nu $ of $u$ in $\mathbb{V}_{\Lambda}$ that has been calculated at iteration $k-1$ on a downward closed set $ \Lambda_{k-1} \subset \Lambda\subseteq \Lambda_{k-1} \cup \widetilde{N}(\Lambda_{k-1})$ using a number of samples $m_{k-1}$ that satisfies~\eqref{eq:condm_adaptive_ls} with $n_k=  \#(\Lambda)$.

\item 
Compute the least-squares approximation 
\begin{equation}
u_L=\sum_{\nu \in \Lambda_{k-1} \cup F_1} c_\nu \phi_\nu 
\label{eq:computation_LS_greedy}
\end{equation}
of $u$ on $\mathbb{V}_{\Lambda_{k-1} \cup F_1}$ using a number of samples $m_{k}$ that satisfies~\eqref{eq:condm_adaptive_ls} with $n_k=\#(\Lambda_{k-1}\cup F_1)$. 

\item 
Set  
\begin{equation}
F_2=\textrm{bulk}(F_1,
{\cal E}_L
,\alpha_2 ), 
\label{eq:greedy_ls_dorfler2}
\end{equation}
where ${\cal E}_L$ uses the least-squares approximation $u_L$ computed on $\Lambda_{k-1}\cup F_1$. 
\item Define $\Lambda_{k}=\Lambda_{k-1} \cup F_2$.
\end{itemize}

The set $\widetilde{N}(\Lambda_{k-1})$ can be large, and might contain many indices that are associated to small coefficients. 
Discarding these indices is important in order to avoid unnecessary computational burden in the calculation of the least-squares approximation.  
The purpose of the bulk procedure~\eqref{eq:greedy_ls_dorfler1} is to
perform a preliminary selection of a set $F_1 \subseteq \widetilde{N}(\Lambda_{k-1})$ of indices, using the cheap estimator ${\cal E}_M$. At iteration $k$, ${\cal E}_M$ in \eqref{eq:greedy_ls_dorfler1} uses the estimator computed in~\eqref{eq:computation_LS_greedy} at iteration $k-1$ and truncated to $\Lambda_{k-1}$. 
Afterwards, at iteration $k$, the least-squares approximation in~\eqref{eq:computation_LS_greedy} is calculated on $\Lambda_{k-1}\cup F_1$, 
using a number of samples $m_k$ which satisfies condition~\eqref{eq:condm_adaptive_ls}, 
with $n_k=\#(\Lambda_{k-1}\cup F_1)$. 
The second bulk procedure~\eqref{eq:greedy_ls_dorfler2} selects 
a set $F_2$ of indices from $F_1$, using the more accurate estimator ${\cal E}_L$. 
The convergence rate of the adaptive algorithm depends on the values given to the parameters $\alpha_1$ and $\alpha_2$.

Finally we mention some open issues related to the development of adaptive algorithms 
using the weighted least-squares methods discussed in Section~\ref{sec:4}, instead of standard least squares. 
In principle the same algorithms described above   
can be used with the weighted least-squares estimator $u_W$ replacing the standard least-squares estimator $u_L$,
provided that, at each iteration $k$, the number of samples $m_k$ satisfies 
$$
\dfrac{m_k}{\ln m_k} \geq \kappa \
 n_k, 
$$
and that the samples are drawn from the optimal measure, see Theorem~\ref{thm:theo11}.  
This ensures that at each iteration $k$ of the adaptive algorithm, the weighted least-squares approximation 
remains stable and accurate. However, no guarantees on stability and accuracy are ensured if the
above conditions are not met, for example when the samples from previous iterations are recycled.

\subsection{Approximation in Downward Closed Spaces: beyond Polynomials}
\label{sec:5.4}

The concept of downward closed approximation spaces can be generalized beyond the polynomial setting. We start from a countable index set $S$ equipped with a partial order $\leq$, 
and assume that there exists a root index $0_S\in S$ such that $0_S \leq \sigma$ for all $\sigma\in S$.
We assume that $(B_\sigma)_{\sigma\in S}$ is a basis of functions defined on $[-1,1]$ such that $B_{0_S} \equiv 1$. We then define by tensorization a basis of functions
on $U=[-1,1]^d$ when $d<\infty$, or $U=[-1,1]^\N$ in the case of infinitely many variables, according to
$$
B_\nu(y)=\prod_{j\geq 1} B_{\nu_j} (y_j), \quad \nu:=(\nu_j)_{j\geq 1}\in {\cal F},
$$
where ${\cal F}:=S^d$ in the case $d<\infty$, or ${\cal F}=\ell^0(\N,S)$, \emph{i.e.}~the set of finitely supported sequences, in the case $d=\mathbb{N}$. 

The set ${\cal F}$ is equipped with a partial order induced by its univariate counterpart: $\nu\leq \widetilde \nu$ if and only if $\nu_j\leq \widetilde \nu_j$ for all $j\geq 1$. We may then define downward closed sets $\Lambda\subset {\cal F}$ in the same way as in Definition~\ref{thm:defdc} which corresponds to the particular case $S=\mathbb{N}$. 
We then define the associated downward closed approximation space by
$$
\mathbb{V}_\Lambda:=V\otimes \mathbb{B}_\Lambda, \quad \mathbb{B}_\Lambda:={\rm span}\{B_\nu\, : \, \nu\in \Lambda\},
$$
that is the space of functions of the form $\sum_{\nu\in \Lambda} u_\nu B_\nu$ with $u_\nu\in V$.

Given a sequence $T=(t_\sigma)_{\sigma\in S}$ of pairwise distinct points we say that the basis $(B_\sigma)_{\sigma\in S}$ is hierarchical when it satisfies
$$
B_\sigma(t_\sigma)=1 \ {\rm and} \  B_\sigma(t_{\widetilde\sigma})=0 \ {\rm if} \ \widetilde \sigma \leq \sigma \ {\rm and} \ \widetilde \sigma \neq \sigma.
$$
We also define the tensorized grid 
$$
y_\nu:=(t_{\nu_j})_{j\geq 1}\in U.
$$
Then, if $\Lambda\subset {\cal F}$ is a downward closed set, we may define an interpolation operator $I_\Lambda$ onto $V_\Lambda$ associated to the grid 
$$
\Gamma_\Lambda:=\{y_\nu \; : \; \nu\in \Lambda\}.
$$
In a similar manner as in the polynomial case, this operator is defined inductively by
$$
I_\Lambda u:=I_{\widetilde \Lambda} u+\alpha_{\nu} B_\nu,\quad \alpha_\nu:=\alpha_\nu(u)=u(y_\nu)-I_{\widetilde \Lambda} u(y_\nu),
$$
where $\nu\notin\widetilde \Lambda$ and $\widetilde\Lambda$ is any downward closed set such that $\Lambda=\widetilde\Lambda\cup\{\nu\}$. We initialize this computation with $\Lambda_1=\{0_{\cal F}\}$, where $0_{\cal F}$ is the null multi-index, by defining $I_{\Lambda_1}u$ as the constant function with value $u(y_{0_{\cal F}})$. 

Examples of relevant hierarchical systems include the classical piecewise linear hierarchical basis functions.
In this case the set $S$ is defined by
$$
S = \{\lambda_{-1},\lambda_1,(0,0)\} \cup \left\{(j,k)\; : \;  -2^{j-1}\leq k\leq 2^{j-1}-1,\; j=1,2,\dots \right\}
$$
equipped with the partial order $\lambda_{-1} \leq \lambda_{1} \leq (0,0)$ and 
$$
(j,k)\leq (j+1,2k), \quad\quad
(j,k)\leq (j+1,2k+1),  
\quad \quad (j,k) \in S.
$$
The set $S$ is thus a binary tree where $\lambda_{-1}$ is the root node, $(0,0)$ is a child 
of $\lambda_1$ which is itself a child of $\lambda_{-1}$, every node $(j,k)$ has two children
$(j+1,2k)$ and $(j+1,2k+1)$, and the relation $\widetilde \lambda \leq \lambda$ means that $\widetilde \lambda$ is a parent of $\lambda$.
The index $j$ corresponds to the level of refinement, \emph{i.e.}~the depth of the node in the binary tree. 
	
We associate with $S$ the sequence
$$
T :=  \{t_{\lambda_{-1}},t_{\lambda_1},t_{(0,0)}\} \cup \left\{t_{(j,k)} := \frac {2k+1}{2^j}: (j,k)\in S, j\geq1 \right\} , 
$$
where $t_{\lambda_{-1}}=-1$,  $t_{\lambda_1}=1$ and $t_{(0,0)}=0$. The 
hierarchical basis of piecewise linear functions defined over 
$[-1,1]$ is then given by
$$
B_{\lambda_{-1}} \equiv 1,\quad 
B_{\lambda_{1}} (t) = \frac {1+t}{2},\quad
B_{(j,k)} (t) = H(2^j(t-t_{(j,k)})),\quad 
(j,k) \in S,
$$
where
$$
{H}(t):=\max\{0,1-|t|\},
$$
is the usual hat function. In dimension $d=1$, the hierarchical interpolation amounts in the following steps: start by approximating $f$ with the constant function equal to $f(-1)$, then with the affine function that coincides with $f$ at $-1$ and $1$, then with the piecewise affine function that coincides with $f$ at $-1$, $0$ and $-1$; afterwards refine the approximation in further steps by interpolating $f$ at the midpoint of an interval between two adjacents interpolation points. 

Other relevant examples include piecewise polynomials, hierarchical basis functions, and more general interpolatory wavelets, see \cite{Co} for a survey.

\end{document}